\pgfplotsset{compat=1.18}
\newcommand{\entryneedsurl}[1]{\addtocategory{needsurl}{#1}}
\let \eps \varepsilon
\theoremstyle{plain} 
\newtheorem{theorem}{Theorem}[section]
\newtheorem{corollary}[theorem]{Corollary}
\newtheorem{lemma}[theorem]{Lemma}
\newtheorem{proposition}[theorem]{Proposition}
\theoremstyle{definition} %
\newtheorem{definition}[theorem]{Definition}
\theoremstyle{remark} %
\newtheorem{remark}[theorem]{Remark}
\crefname{theorem}{theorem}{theorems}
\crefname{Prop}{Proposition}{Propositions}
\crefname{Lem}{Lemma}{Lemmas}
\crefname{Kor}{Corollary}{Corollaries}
\crefname{Bem}{Remark}{Remarks}
\crefname{Bsp}{Example}{Examples}
\crefname{Def}{Definition}{Definitions}
\crefname{Alg}{Algorithm}{Algorithms}
\numberwithin{equation}{section}
\newcolumntype{P}[1]{>{\centering\arraybackslash}m{#1}}
\newcommand{\defeq}{\mathrel{\mathop:}=}    
\newcommand{\CC}{\mathbb{C}}
\newcommand{\RR}{\mathbb{R}}
\newcommand{\NN}{\mathbb{N}}
\newcommand{\QQ}{\mathbb{Q}}
\newcommand{\Z}{\mathbb{Z}}
\newcommand{\ZZ}{\mathbb{Z}}
\newcommand{\wirt}{\partial_{\mathrm{wirt}}}
\newcommand{\wirtq}{\overline{\partial}_{\mathrm{wirt}}}
\newcommand{\m}{\textbf{m}}
\newcommand{\elll}{\boldsymbol{\ell}}
\newcommand{\PP}{\mathcal{P}}
\newcommand{\pp}{\textbf{p}}
\newcommand{\qq}{\textbf{q}}
\newcommand{\rr}{\textbf{r}}
\newcommand{\kk}{\textbf{k}}
\newcommand{\modrelu}{\sigma_{\mathrm{modReLU}, b}}
\newcommand*{\fres}[2]{ {\left.\kern-\nulldelimiterspace #1 \vphantom{\big|} \right|_{\kern-1pt #2} }}
\newcommand{\norel}{\mathrel{\phantom{=}}}       
\newcommand{\aalpha}{\boldsymbol{\alpha}}
\newcommand{\nuu}{\boldsymbol{\nu}}
\DeclareMathOperator{\RE}{Re}
\DeclareMathOperator{\IM}{Im}
\DeclareMathOperator{\spann}{span}
\DeclareMathOperator{\supp}{supp}
\let\emptyset\varnothing
\DeclareMathOperator{\card}{card}
\title{Optimal approximation \\ using complex-valued neural networks}
\author{
  Paul Geuchen \\
  MIDS, \\ KU Eichstätt-Ingolstadt,\\
  Auf der Schanz 49, \\85049 Ingolstadt, Germany \\
  \texttt{paul.geuchen@ku.de} \\
   \And
  Felix Voigtlaender\\
  MIDS, \\ KU Eichstätt-Ingolstadt, \\
  Auf der Schanz 49, \\85049 Ingolstadt, Germany \\
   \texttt{felix.voigtlaender@ku.de} \\
}
\begin{document}

\maketitle

\begin{abstract}
  Complex-valued neural networks (CVNNs) have recently shown promising empirical success, for instance for increasing the stability of recurrent neural networks and for improving the performance in tasks with complex-valued inputs, such as in MRI fingerprinting. While the overwhelming success of Deep Learning in the real-valued case is supported by a growing mathematical foundation, such a foundation is still largely lacking in the complex-valued case.
  We thus analyze the expressivity of CVNNs by studying their approximation properties.
  Our results yield the first quantitative approximation bounds for CVNNs that apply
  to a wide class of activation functions including the popular modReLU and complex cardioid activation functions.
  Precisely, our results apply to any activation function that is smooth but not polyharmonic
  on some non-empty open set; this is the natural generalization of the class of
  smooth and non-polynomial activation functions to the complex setting.
  Our main result shows that the error for the approximation of $C^k$-functions
  scales as $m^{-k/(2n)}$ for $m \to \infty$ where $m$ is the number of neurons,
  $k$ the smoothness of the target function and $n$ is the (complex) input dimension.
  Under a natural continuity assumption, we show that this rate is optimal;
  we further discuss the optimality when dropping this assumption.
  Moreover, we prove that the problem of approximating $C^k$-functions
  using continuous approximation methods unavoidably suffers from the curse of dimensionality.
\end{abstract}

\section{Introduction}

Deep Learning currently predominantly relies on real-valued neural networks,
which have led to breakthroughs in fields like image classification or speech recognition
\cite{6638947, krizhevsky_imagenet_2017,NIPS2017_3f5ee243}.
However, recent work has uncovered several application areas in which
the use of complex-valued neural networks (CVNNs) leads to better results
than the use of their real-valued counterparts.
These application areas mainly include tasks where complex numbers inherently occur as inputs
of a machine learning model such as Magnetic Resonance Imaging (MRI)
\cite{virtue_better_2017,cole2021analysis,el2020deep}
and Polarimetric Synthetic Aperture Radar (PolSAR) Imaging
\cite{8039431,barrachina2023comparison,barrachina2021complex}.
Moreover, CVNNs have been used to improve the stability of recurrent neural networks
\cite{arjovsky_unitary_2016} and have been successfully applied in various other fields
\cite{8356260,qu2023entanglement}.
The mathematical theory of these complex-valued neural networks, however,
is still in its infancy.
There is therefore a great interest in studying CVNNs
and in particular in uncovering the differences and commonalities between CVNNs
and their real-valued counterparts. 
A prominent example highlighting the unexpected differences between both network classes
is the \emph{universal approximation theorem} for neural networks,
whose most general real-valued version was proven in 1993 \cite{leshno_multilayer_1993}
(with a more restricted version appearing earlier \cite{cybenko_approximation_1989})
and which was recently generalized to the case of CVNNs \cite{voigtlaender_universal_2022}.
The two theorems describe necessary and sufficient conditions on an activation function
which guarantee that arbitrarily wide neural networks of a fixed depth can approximate any
continuous function on any compact set arbitrarily well (with respect to the uniform norm).
Already here it was shown that complex-valued networks behave significantly different
from real-valued networks: While real-valued networks are universal if and only if
the activation function is non-polynomial, complex-valued networks with a single hidden layer
are universal if and only if the activation function is non-polyharmonic (see below for a definition).
Furthermore, there exist continuous activation functions for which deep CVNNs are universal
but shallow CVNNs are not, whereas the same cannot happen for real-valued neural networks.
This example shows that it is highly relevant to study the properties of CVNNs
and to examine which of the fundamental properties of real-valued networks extend to complex-valued networks.
\renewcommand{\arraystretch}{1.5}

\begin{table}
\centering
\begin{tabular}{P{0.2\linewidth}|P{0.2\linewidth}|P{0.2\linewidth}|P{0.2\linewidth}} 
&Condition on activation function & Continuity of weight selection & Approximation Error \\ \hline
\Cref{main_2}& smooth \& non-polyharmonic&possible& $\mathcal{O}(m^{-k/(2n)})$ \\ \hline
Consequence of \Cref{thm:opti_conti}& continuous & assumed & $\Omega(m^{-k/(2n)})$ \\ \hline
\Cref{main_4}& very special activation function & not assumed & $\mathcal{O}(m^{-k/(2n-1)})$ \\ \hline
\Cref{main_5}& \rule{0pt}{0.65cm}$\displaystyle\frac{1}{1 + e^{-\mathrm{Re}(z)}}$ & not assumed & $\widetilde{\Omega}(m^{-k/(2n)})$
\end{tabular}
\vspace{0.3cm}
\caption{Overview of the proven approximation bounds.
$k$ is the regularity of the approximated functions (which are assumed to be $C^k$),
$n$ the (complex) input dimension and $m$ the number of neurons in the hidden layer of the network.
The notation $\widetilde{\Omega}$ is similar to $\Omega$, but ignoring log factors.}
\label{tab:overview}
\end{table}
\raggedbottom

Essentially the only existing \emph{quantitative} result regarding
the approximation-theoretical properties of CVNNs is \cite{caragea_quantitative_2022},
which provides results for approximating $C^k$-functions by \emph{deep} CVNNs
using the modReLU activation function.
However, for real-valued NNs it is known that already \emph{shallow} NNs can approximate
$C^k$-functions at the optimal rate.
Precisely, Mhaskar showed in \cite{mhaskar_neural_1996} that one can approximate $C^k$-functions
on $[-1,1]^n$ with an error of order $m^{-k/n}$ as $m \to \infty$,
where $m$ is the number of neurons in the hidden layer.
Here he assumed that the activation function is smooth on an open interval and that at some point
of this interval no derivative vanishes.
This is equivalent to the activation function being smooth and non-polynomial on that interval,
cf.\ \cite[p.~53]{donoghue_distributions_1969}.

The present paper shows that a comparable result holds in the setting of complex-valued networks,
by proving that one can approximate every function in $C^k \left( \Omega_n; \CC \right)$
(where differentiability is understood in the sense of real variables)
with an error of the order $m^{-k/(2n)}$ (as $m \to \infty$) using shallow
complex-valued neural networks with $m$ neurons in the hidden layer.
Here we define the cube $\Omega_n \defeq [-1,1]^n + i [-1,1]^n$.
The result holds whenever the activation function $\phi: \CC \to \CC$ is smooth and non-polyharmonic
on some non-empty open set.
This is a very natural condition, since for polyharmonic activation functions there exist
$C^k$-functions that cannot be approximated at all below some error threshold
using shallow neural networks with this activation function \cite{voigtlaender_universal_2022}.

Furthermore, the present paper studies in how far the approximation order of $m^{-k/(2n)}$
is \emph{optimal}, meaning that an order of $m^{-(k/2n) - \alpha}$ (where $\alpha > 0$)
cannot be achieved.
Here it turns out that the derived order of approximation is indeed optimal
(even in the class of CVNNs with possibly more than one hidden layer)
in the setting that the weight selection is \emph{continuous},
meaning that the map that assigns to a function $f \in C^k \left( \Omega_n;\CC\right)$
the weights of the approximating network is continuous with respect to some norm on $C^k (\Omega_n ; \CC)$.
This continuity assumption is natural since typical learning algorithms
such as (stochastic) gradient descent use samples $f(x_j)$ of the target function
and then apply continuous operations to them to update the network weights. 

We investigate this optimality result further by dropping the continuity assumption
and constructing two special smooth and non-polyharmonic activation functions
with the first one having the property that the order of approximation can indeed be strictly improved
via a \emph{discontinuous} selection of the related weights.
For the second activation function we show that the order of $m^{-k/(2n)}$ \emph{cannot} be improved,
even if one allows a discontinuous weight selection.
This in particular shows that in the given generality of arbitrary smooth,
non-polyharmonic activation functions, the upper bound $\mathcal{O}\left(m^{-k/(2n)}\right)$
cannot be improved, even for a possibly discontinuous choice of the weights.
An overview of the approximation bounds proven in this paper can be found in \Cref{tab:overview}.

Moreover, we analyze the \emph{tractability} (in terms of the input dimension $n$)
of the considered problem of approximating $C^k$-functions using neural networks.
\Cref{thm:intrac} shows that one necessarily needs a number of parameters \emph{exponential}
in $n$ to obtain a non-trivial approximation error.
To the best of our knowledge, \Cref{thm:intrac} is the first result showing that the problem
of approximating $C^k$-functions using continuous approximation methods is intractable
(in terms of the input dimension $n$).

\subsection{Related Work}
\textbf{Real-valued neural networks.}
By now, the approximation properties of real-valued neural networks are quite well-studied
(cf.\ \cite{leshno_multilayer_1993,yarotsky_error_2017,barron1993universal,CiCP-28-1768,SIEGEL2020313,mhaskar_neural_1996,yarotsky_phase_2021} and the references therein).
We here only discuss a few papers that are most closely related to the present work.

In \cite{mhaskar_neural_1996}, Mhaskar analyzes the rate of approximation of
shallow real-valued neural networks for target functions of regularity $C^k$.
Our results can be seen as the generalization of \cite{mhaskar_neural_1996} to the complex setting.
Our proofs rely on several techniques from \cite{mhaskar_neural_1996};
however, significant modifications are required to make the proofs work for general smooth non-polyharmonic functions. 

One of the first papers to observe that neural networks with general (smooth) activation function
can be surprisingly expressive is \cite{MAIOROV199981} where it was shown that a neural network
of \emph{constant size} can be universal.
One of the activation functions in \Cref{sec:optimality} is based on a similar idea.

The importance of distinguishing between continuous and discontinuous weight selection
(which in our setting is discussed in \Cref{sec:optimality}) was observed for ReLU-networks in \cite{yarotsky_phase_2021}. 

The paper \cite{KAINEN199947} shows that neural network approximation is \emph{not} continuous in the following sense:
The best approximating neural network $\Phi(f)$ of a given size does not depend continuously on $f \in C^k$.
This result, however, is not in conflict with our results:
We want to assign to any $C^k$-function $f$ a network $\widetilde{\Phi}(f)$ that approximates $f$
below the error threshold $m^{-k/(2n)}$.
The network $\widetilde{\Phi}(f)$, however, does \emph{not} have to coincide with the best approximating network $\Phi(f)$.

\textbf{Complex-valued neural networks.}
 When it comes to general literature about mathematical properties of complex-valued neural networks,
 surprisingly little work can be found.
 The \emph{Universal Approximation Theorem for Complex-Valued Neural Networks} \cite{voigtlaender_universal_2022}
 has already been mentioned above.
 In particular, it has been shown that shallow CVNNs are universal if and only if the activation function
 $\phi$ is not polyharmonic.
 Thus, the condition assumed in the present paper (that $\phi$ should be smooth, but not polyharmonic)
 is quite natural.

Regarding \emph{quantitative} approximation results for CVNNs, the only existing work
of which we are aware is \cite{caragea_quantitative_2022},
analyzing the approximation capabilities of \emph{deep} CVNNs where the modReLU is used as activation function.
Since the modReLU satisfies our condition regarding the activation function,
the present work can be seen as an improvement to \cite{caragea_quantitative_2022}.
Precisely, (i) we consider general activation functions, including but not limited to the modReLU,
(ii) we improve the order of approximation by a log factor,
and (iii) we show that this order of approximation can be achieved using shallow networks instead of the deep networks used in \cite{caragea_quantitative_2022}.
We stress that our proof techniques differ significantly from the ones applied in \cite{caragea_quantitative_2022}:
The arguments in \cite{caragea_quantitative_2022} take their main ideas from \cite{yarotsky_error_2017}
making heavy use of the specific definition of the modReLU.
In contrast, since we consider quite general activation functions, we necessarily follow
a much more general approach following the ideas from \cite{mhaskar_neural_1996}.

\section{Preliminaries} \label{sec:prelim}

\textbf{Shallow complex-valued neural networks.} In this paper we mainly consider so-called shallow complex-valued neural networks, meaning complex-valued neural networks with a single hidden layer. Precisely, we consider functions of the form
\begin{equation*}
    \CC^n \ni z \mapsto \sum_{j=1}^{m} \sigma_j \phi\left(\rho_j^T \cdot z + \eta_j\right) \in \CC,
\end{equation*}
with $\rho_1, ..., \rho_{m} \in \CC^{n}$, $\sigma_1, ..., \sigma_{m}, \eta_1,..., \eta_{m} \in \CC$ and an \emph{activation function} $\phi:\CC \to \CC$. Here, $m \in \NN$ denotes the number of neurons of the network and we write $v^T$ for the transpose of a vector $v$.

To simplify the formulation of the results, we introduce the following notation: We write $\mathcal{F}^{\phi}_{n,m}$ for the set of \emph{first layers} of shallow complex-valued neural networks with activation function $\phi$, with $n$ input neurons and $m$ hidden neurons, meaning
\begin{equation*}
  \mathcal{F}^{\phi}_{n,m}
  \defeq \left\{
           z \mapsto \left(\phi\left(\rho_j^T \cdot z + \eta_j\right)\right)_{j=1}^m : \ \rho_j \in \CC^n, \ \eta_j \in \CC
         \right\}
  \subseteq \left\{ F: \CC^n \to \CC^m\right\}.
\end{equation*}
Hence, each shallow CVNN can be written as $\sigma^T \Phi$ with $\sigma \in \CC^m$ and $\Phi \in \mathcal{F}^{\phi}_{n,m}$;
see \Cref{fig:cvnn} for a graphical representation of a shallow CVNN.

\begin{figure}[t]
\centering
\begin{tikzpicture}[x=0.9cm,y=0.63cm,>=stealth,neuron/.style={circle,draw=black,inner sep=0pt,minimum size=8pt},
transition/.style={circle,fill=black,inner sep=0pt,minimum size=4pt},
pre/.style={->, shorten >=0.5pt,shorten <=0.5pt},
post/.style={->, shorten >=0.5pt,shorten <=0.5pt},
peri/.style={->, shorten >=0.5pt,shorten <=0.5pt}]

\node[transition] (init1) at (1,-3){};
\node[transition] (init2) at (2,-3){};
\node[transition] (init3) at (3,-3){};
\node[transition] (init4) at (6,-3){};
\node[transition] (init5) at (7,-3){};
\node[transition] (init6) at (8,-3){};
\node[neuron,label={[xshift=-0.8cm, yshift=0cm]$\phi(\rho_1^T z + \eta_1)$}] (hidden1) at (-0.5,-1){};
\node[neuron] (hidden2) at (0.5,-1){};
\node[neuron] (hidden3) at (1.5,-1){};
\node[neuron] (hidden4) at (2.5,-1){};
\node[neuron] (hidden5) at (3.5,-1){};
\node[neuron] (hidden7) at (5.5,-1){};
\node[neuron] (hidden8) at (6.5,-1){};
\node[neuron] (hidden9) at (7.5,-1){};
\node[neuron] (hidden10) at (8.5,-1){};
\node[neuron,label={[xshift=0.8cm, yshift=0cm]$\phi(\rho_m^T z + \eta_m)$}] (hidden11) at (9.5,-1){};
\node[transition, label=above:{$\sigma^T \Phi(z) = \sum_{j=1}^{m} \sigma_j \phi\left(\rho_j^T \cdot z + \eta_j\right)$}] (out) at (4.5,1){};

\definecolor{mycolor}{RGB}{102,0,204}
\draw[fill = mycolor, opacity = 0.1](-0.8,-3.1) rectangle (9.8, -0.75) ;

\draw ($(init3)!0.5!(init4)$) node{$\cdots$};
\draw ($(hidden5)!0.5!(hidden7)$) node{$\cdots$};

\draw[pre](init1)--(hidden1);
\draw[pre](init1)--(hidden2);
\draw[pre](init1)--(hidden3);
\draw[pre](init1)--(hidden4);
\draw[pre](init1)--(hidden5);
\draw[pre](init1)--(hidden7);
\draw[pre](init1)--(hidden8);
\draw[pre](init1)--(hidden9);
\draw[pre](init1)--(hidden10);
\draw[pre](init1)--(hidden11);

\draw[pre](init2)--(hidden1);
\draw[pre](init2)--(hidden2);
\draw[pre](init2)--(hidden3);
\draw[pre](init2)--(hidden4);
\draw[pre](init2)--(hidden5);
\draw[pre](init2)--(hidden7);
\draw[pre](init2)--(hidden8);
\draw[pre](init2)--(hidden9);
\draw[pre](init2)--(hidden10);
\draw[pre](init2)--(hidden11);

\draw[pre](init3)--(hidden1);
\draw[pre](init3)--(hidden2);
\draw[pre](init3)--(hidden3);
\draw[pre](init3)--(hidden4);
\draw[pre](init3)--(hidden5);
\draw[pre](init3)--(hidden7);
\draw[pre](init3)--(hidden8);
\draw[pre](init3)--(hidden9);
\draw[pre](init3)--(hidden10);
\draw[pre](init3)--(hidden11);

\draw[pre](init4)--(hidden1);
\draw[pre](init4)--(hidden2);
\draw[pre](init4)--(hidden3);
\draw[pre](init4)--(hidden4);
\draw[pre](init4)--(hidden5);
\draw[pre](init4)--(hidden7);
\draw[pre](init4)--(hidden8);
\draw[pre](init4)--(hidden9);
\draw[pre](init4)--(hidden10);
\draw[pre](init4)--(hidden11);

\draw[pre](init5)--(hidden1);
\draw[pre](init5)--(hidden2);
\draw[pre](init5)--(hidden3);
\draw[pre](init5)--(hidden4);
\draw[pre](init5)--(hidden5);
\draw[pre](init5)--(hidden7);
\draw[pre](init5)--(hidden8);
\draw[pre](init5)--(hidden9);
\draw[pre](init5)--(hidden10);
\draw[pre](init5)--(hidden11);

\draw[pre](init6)--(hidden1);
\draw[pre](init6)--(hidden2);
\draw[pre](init6)--(hidden3);
\draw[pre](init6)--(hidden4);
\draw[pre](init6)--(hidden5);
\draw[pre](init6)--(hidden7);
\draw[pre](init6)--(hidden8);
\draw[pre](init6)--(hidden9);
\draw[pre](init6)--(hidden10);
\draw[pre](init6)--(hidden11);

\draw[post](hidden1)--(out);
\draw[post](hidden2)--(out);
\draw[post](hidden3)--(out);
\draw[post](hidden4)--(out);
\draw[post](hidden5)--(out);
\draw[post](hidden7)--(out);
\draw[post](hidden8)--(out);
\draw[post](hidden9)--(out);
\draw[post](hidden10)--(out);
\draw[post](hidden11)--(out);

 \draw [decoration={brace,raise=5pt,mirror},decorate] (init1.south) -- (init6.south) node [below=5pt,pos=0.5] {$n$ inputs};
  \draw [decoration={brace,raise=5pt},decorate] (-0.5,-3) -- (hidden1.north) node [left=5pt,pos=0.5] {First layer};
\end{tikzpicture}

\caption{Graphical representation of a shallow neural network.
Input and output neurons are depicted as dots, hidden neurons are depicted as circles.
The term \emph{first layer} describes the transformation from the input to the hidden neurons,
including the application of the activation function.}
\label{fig:cvnn}
\end{figure}
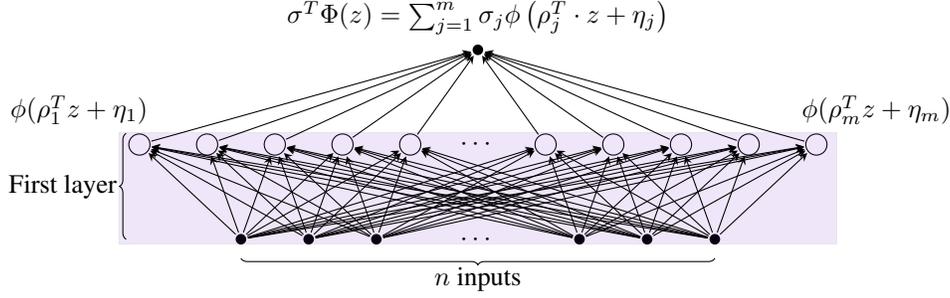

\textbf{Approximation.} The paper aims to analyze the approximation of $C^k$-functions on the complex cube
\begin{equation*}
\Omega_n \defeq [-1,1]^n + i [-1,1]^n
\end{equation*}
using shallow CVNNs. Here, we say that a function $f: \Omega_n \to \CC$ is in $C^k(\Omega_n; \CC)$
if and only if $f$ is $k$ times continuously differentiable on $\Omega_n$,
where the derivative is to be understood in the sense of real variables, i.e.,
in the sense of interpreting $f$ as a function $[-1,1]^{2n} \to \RR^2$ and taking usual real derivatives.
We further define the \emph{$C^k$-norm} of a function $f \in C^k(\Omega_n; \CC)$ as
\begin{equation}\label{eq:ckdef}
  \Vert f \Vert_{C^k(\Omega_n; \CC)}
  \defeq \underset{\vert \kk \vert \leq k}{\underset{\kk \in \NN_0^{2n}}{\sup}} \Vert \partial^\kk f\Vert_{L^\infty(\Omega_n; \CC)} ,
  \qquad\text{where}\qquad
  \Vert g \Vert_{L^\infty(\Omega_n; \CC)} \defeq \sup_{z \in \Omega_n} \vert g(z) \vert
\end{equation}
for any function $g:\Omega_n \to \CC$. Note that we write $\NN = \{1,2,3,...\}$ and $\NN_0 = \{0\} \cup \NN$. Using the previously introduced notation, we thus seek to bound the worst-case approximation error, i.e.,
\begin{equation*}
  \underset{\Vert f \Vert_{C^k(\Omega_n; \CC)} \leq 1}{\sup_{f \in C^k(\Omega_n ; \CC)}} \ \
    \underset{\sigma \in \CC^m}{\inf_{\Phi \in \mathcal{F}^\phi_{n,m}}}
      \Vert f - \sigma^T \Phi \Vert_{L^\infty(\Omega_n ; \CC)}.
\end{equation*}

\textbf{Wirtinger calculus and polyharmonic functions.} For a function $f: \CC \to \CC$
which is differentiable in the sense of real variables at a point $z_0 \in \CC$
we define its \emph{Wirtinger derivatives} at $z_0$ as
\begin{equation*}
  \wirt f (z_0)
  \defeq \frac{1}{2} \left(\frac{\partial f}{\partial x}(z_0) - i \cdot \frac{\partial f}{\partial y }(z_0)\right)
  \quad \text{and} \quad
  \wirtq f (z_0)
  \defeq \frac{1}{2} \left(\frac{\partial f}{\partial x}(z_0) + i \cdot \frac{\partial f}{\partial y }(z_0)\right).
\end{equation*}
Here, $\frac{\partial}{\partial x}$ and $\frac{\partial}{ \partial y}$ denote the usual partial derivatives in the sense of real variables.
We extend this definition to multivariate functions defined on open subsets of $\CC^n$ by considering \emph{coordinatewise} Wirtinger derivatives. 

A function $f:U \to \CC$, where $U \subseteq \CC$ is an open set, is called \emph{smooth}
if it is differentiable arbitrarily many times (in the sense of real variables).
We write $f \in C^\infty(U;\CC)$ in that case. Moreover, $f$ is called \emph{polyharmonic} (on $U$)
if it is smooth and if there exists $m \in \NN_0$ satisfying
\begin{equation*}
  \Delta^m f \equiv 0 \quad \text{on} \ U.
\end{equation*}
Here, $\Delta \defeq \frac{\partial^2}{\partial x^2} + \frac{\partial^2}{\partial y^2} = 4 \wirt \wirtq$ denotes the usual Laplace-Operator.

The following \Cref{prop:nonpoly} describes a property of non-polyharmonic functions which is crucial for proving the approximation results of this paper.
\begin{proposition} \label{prop:nonpoly}
Let $\emptyset \neq U \subseteq \CC$ be an open set and let $\phi \in C^\infty(U; \CC)$ be non-polyharmonic. Then for every $M \in \NN_0$ there exists a point $z_M \in U$ satisfying
\begin{equation*}
\wirt^m \wirtq^\ell \phi (z_M) \neq 0 \quad \textrm{for} \ \textrm{all}\  m,\ell \in \NN_0 \ \textrm{with} \ m,\ell\leq M.
\end{equation*} 
\end{proposition}
The proof of \Cref{prop:nonpoly} is an application of the Baire category theorem; see \Cref{admissibility_reordered}.

\textbf{Important complex activation functions.}
We briefly discuss in how far two commonly used complex activation functions satisfy our assumptions:
The \emph{modReLU} proposed in \cite{arjovsky_unitary_2016} and the \emph{complex cardioid}
used in \cite{virtue_better_2017} for MRI fingerprinting where the performance
could be significantly improved using complex-valued neural networks.
The modReLU is defined as
\begin{equation*}
 \modrelu: \quad
 \CC \to \CC, \quad
 \modrelu(z)
 \defeq \begin{cases}
          (\vert z \vert + b)\frac{z}{\vert z \vert},& \mathrm{if} \ \vert z \vert + b \geq 0, \\
          0,& \text{otherwise,}
        \end{cases}
\end{equation*}
where $b<0$ is a fixed  parameter.
The complex cardioid is defined as
\begin{equation*}
    \card: \quad
    \CC \to \CC, \quad
    \card(z) \defeq  \frac{1}{2}(1+\mathrm{cos}(\sphericalangle z ))z.
\end{equation*}
Here, $\sphericalangle z = \theta \in \RR$ denotes the polar angle of a complex number $z = re^{i\theta}$,
where we define $\sphericalangle 0 := 0$; see \Cref{fig:abs} for plots of the absolute value of the two functions.

Both functions are smooth and non-polyharmonic on a non-empty open subset of $\CC$,
which is proven in \Cref{concrete_activation_functions_reordered}.
Furthermore, they are both continuous on $\CC$.
Therefore, our approximation bounds established in \Cref{main_1,main_2} in particular apply to those two functions.

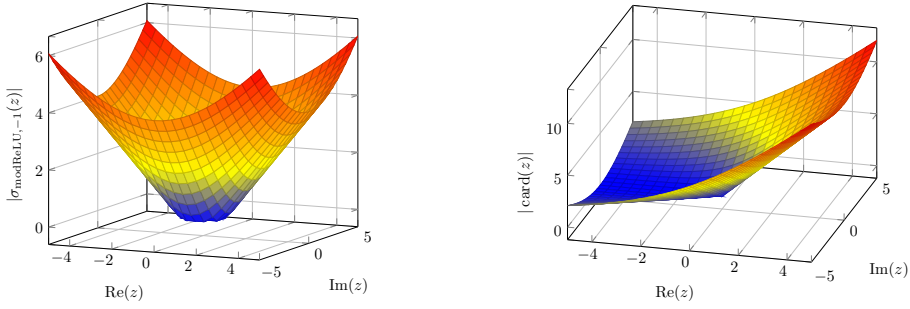
\begin{figure} [t]
\centering
\begin{subfigure}{0.48\textwidth}
\begin{tikzpicture} [scale = 0.6]
\begin{axis}[grid = both, xlabel = $\mathrm{Re}(z)$, ylabel = $\mathrm{Im}(z)$, zlabel = $\vert \sigma_{\mathrm{modReLU}, -1} (z)\vert$,  restrict z to domain*=0:10, view={25}{10}]
\addplot3[
    surf,
]
{sqrt(x^2 + y^2) - 1};
\end{axis}
\end{tikzpicture}
\end{subfigure}
\begin{subfigure}{0.48\textwidth}
\begin{tikzpicture}[scale = 0.6]
\begin{axis}[grid = both, xlabel = $\mathrm{Re}(z)$, ylabel = $\mathrm{Im}(z)$, zlabel = $\vert \card (z)\vert$, view={15}{30}]
\addplot3[
    surf,
]
{0.5(sqrt(x^2 + y^2) + x)};
\end{axis}
\end{tikzpicture}
\end{subfigure}

\caption{Absolute value of the activation functions $\sigma_{\mathrm{modReLU}, -1}$ (left) and $\card$ (right).}
\label{fig:abs}
\end{figure}

\section{Main results} \label{sec:main}

In this section we state the main results of this paper and provide proof sketches for them.
Detailed proofs of the two statements can be found in \Cref{approx_polynomials_reordered,ck_functions_reordered}. 

First we show in \Cref{main_1} that it is possible to approximate any complex polynomial
in $z$ and $\overline{z}$ arbitrarily well using shallow CVNNs with the size of the networks
only depending on the degree of the polynomial (not on the desired approximation accuracy).
Using this result we can prove the main approximation bound, \Cref{main_2},
by first approximating a given $C^k$-function using a polynomial in $z$ and $\overline{z}$
and then approximating this polynomial using \Cref{main_1}.

For $m,n \in \NN$ let
\begin{equation*}
  \mathcal{P}_m^n
  \defeq \left\{
            \CC^n \to \CC,
            \ z \mapsto \sum_{ \m  \leq m} \  \sum_{ \elll  \leq m} a_{\m,\elll} z^\m \overline{z}^{\elll}
            :
            \ a_{\m, \elll} \in \CC
         \right\}
\end{equation*}
denote the space of all complex polynomials on $\CC^n$ in $z$ and $\overline{z}$ of componentwise degree at most $m$.
Here, we are summing over all multi-indices $\m, \elll \in \NN_0^n$ with $\m_j, \elll_j \leq m$ for every $j \in \{1,...,n\}$ and use the notation
\begin{equation*}
  z^{\m} \defeq \prod_{j=1}^n z_j^{\m_j}\quad \text{and}\quad \overline{z}^{\elll} \defeq \prod_{j=1}^n \overline{z_j}^{\elll_j}. 
\end{equation*}
The space $\mathcal{P}_m^n$ is finite-dimensional; hence, it makes sense to talk about bounded subsets of $\mathcal{P}_m^n$ without specifying a norm.
\begin{theorem}\label{main_1}
   Let $m,n \in \NN$, $\varepsilon > 0$ and $\phi: \CC \to \CC$ be smooth and non-polyharmonic on an open set $\emptyset \neq U \subseteq \CC$.
   Let $\PP' \subseteq \PP_m^n$ be bounded and set $N := (4m+1)^{2n}$.
   Then there exists a first layer $\Phi \in \mathcal{F}^\phi_{n,N}$ with the following property:
   For each polynomial $p \in \mathcal{P}'$ there exists $\sigma \in \CC^N$, such that
  \begin{equation*}
    \left\Vert p - \sigma^T  \Phi\right\Vert_{L^\infty \left(\Omega_n; \CC\right)}  \leq \varepsilon.
  \end{equation*}
\end{theorem}

\begin{proof}[Sketch of proof]
For any multi-indices $\m, \elll \in \NN_0^n$ an inductive argument shows for every fixed $z \in \Omega_n$ and $b \in \CC$ that
\begin{equation*}
  \wirt^{\m}\wirtq^{\elll}\left[w \mapsto \phi(w^T z + b)\right]
  = z^{\m}\overline{z}^{\elll} \cdot \left(\wirt^{\vert \m \vert}\wirtq^{\vert \elll \vert}\phi \right)(w^T z + b).
\end{equation*}
Here, $\wirt^{\m}$ and $\wirtq^{\elll}$ denote the multivariate Wirtinger derivatives
with respect to $w$ according to the multi-indices $\m$ and $\elll$, respectively.
Evaluating this at $w=0$ and taking $b \in \CC$ such that none of the mixed Wirtinger derivatives
of $\phi$ at $b$ up to a sufficiently high order vanish (where such a $b$ exists by \Cref{prop:nonpoly})
shows that we can rewrite
\begin{equation}\label{eq:repres}
  z^{\m} \overline{z}^{\elll}
  = \left(\wirt ^{\m} \wirtq^{\elll} \left[ w \mapsto \phi(w^Tz + b) \right] \right)\Big\rvert_{w=0}
    \cdot \left( \left(\wirt^{\vert \m \vert} \wirtq^{\vert \elll \vert}\phi\right)(b)\right)^{-1}.
\end{equation} 
The mixed Wirtinger derivatives can by definition be expressed as linear combinations of usual partial derivatives.
Those partial derivatives can be approximated using a generalized version of \emph{divided differences}:
If $g \in C^k((-r,r)^s ; \RR)$ and $\pp \in \NN_0^s$ with $\vert \pp \vert \leq k$ we have
\begin{equation} \label{eq:approx}
  \partial^{\pp} g (0)
  \approx  (2h)^{-\vert \pp \vert}
           \sum_{0 \leq \textbf{r} \leq \textbf{p}}
             (-1)^{\vert \pp \vert -\vert \rr \vert} \binom{\textbf{p}}{\textbf{r}} g \left( h(2\rr-\pp)\right)
  \quad\text{for } h \searrow 0.
\end{equation} 
See \Cref{sec:div_diff_reordered} for a proof of this approximation.
Note that when one takes $g(w) = \phi(w^Tz + b)$, the right-hand side of \eqref{eq:approx}
is a shallow neural network, as a function of $z$. 

Combining \eqref{eq:repres} and \eqref{eq:approx} yields the desired result; see \Cref{approx_polynomials_reordered} for the details.
\end{proof}

It is crucial that the size of the networks considered in \Cref{main_1} is independent of the approximation accuracy $\varepsilon$. Moreover, the first layer $\Phi$ can be chosen independently of the particular polynomial $p$. Only the weights $\sigma$ connecting hidden layer and output neuron have to be adapted to $p$.

The final approximation result is as follows. Its full proof can be found in \Cref{ck_functions_reordered}.
\begin{theorem}\label{main_2}
  Let $n,k \in \NN$.
  Then there exists a constant $c = c(n,k) > 0$ with the following property:
  For any activation function $\phi:\CC \to \CC$ that is smooth and non-polyharmonic on an open set
  $\emptyset \neq U \subseteq \CC$ and for any $m \in \NN$ there exists a first layer
  $\Phi \in \mathcal{F}^\phi_{n,m}$ with the following property:
  For any $f \in C^k \left(\Omega_n; \CC\right)$ there exist coefficients $\sigma = \sigma(f) \in \CC^m$, such that
  \begin{equation*}
    \left\Vert f - \sigma^T\Phi \right\Vert_{L^\infty \left(\Omega_n; \CC\right)}
    \leq c \cdot m^{-k/(2n)} \cdot \left\Vert f \right\Vert_{C^k \left(\Omega_n; \CC\right)}.
  \end{equation*}
  Furthermore, the map $f \mapsto \sigma(f)$ is a continuous linear operator with respect to the $L^\infty$-norm.
\end{theorem}

\begin{proof}[Sketch of proof]
By splitting $f$ into real and imaginary part we may assume that $f$ is real-valued.
Fourier-analytical results (recalled in \Cref{sec:fourier_reordered})
imply that each $C^k$-function $f $ can be well  approximated by a linear combination
of (multivariate) \emph{Chebyshev polynomials}.
Precisely, we have
\begin{equation*}
  \left\Vert f - P \right\Vert_{L^\infty \left(\Omega_n ; \RR\right)}
  \leq \frac{c}{m^k} \cdot \left\Vert f \right\Vert_{C^k\left(\Omega_n;\RR\right)},
\end{equation*}
where $P$ is given via the formula
\begin{equation*}
P(z) = \sum_{0 \leq \kk \leq 2m-1}\mathcal{V}_\kk^m(f) \cdot T_\kk(z), \quad z \in \Omega_n. 
\end{equation*}
Here, the functions $T_\kk$ are multivariate versions of Chebyshev polynomials
and $\mathcal{V}_\kk^m(f)$ are continuous linear functionals in $f$.
The constant $c>0$ only depends on $n$ and $k$.
See \Cref{sec:fourier_reordered} for a rigorous proof of this approximation property.
Approximating the polynomials $T_\kk$ by neural networks according to \Cref{main_1}
yields the desired claim; see \Cref{ck_functions_reordered} for the details.
\end{proof}

\begin{remark} \label{remark:holo}
\Cref{main_1} can not only be used to derive approximation rates for the approximation of $C^k$-functions
but can be applied to any function class that is well approximable by algebraic polynomials.
For example, it can be used to prove an order of approximation of $\nu^{-m^{1/(2n)}/17}$
for the approximation of functions $f:  \Omega_n \to \CC$ that can be \emph{holomorphically extended}
onto some polyellipse in $\CC^{2n}$.
The parameter $\nu > 1$ specifies the size of this polyellipse.
We refer the interested reader to \Cref{sec:polyellipse} for detailed definitions, statements and proofs for this fact. 
\end{remark}

\section{Optimality of the derived approximation rate} \label{sec:optimality}

In this section we discuss the optimality of the approximation rate proven in \Cref{main_2}.
We first deduce from general results by DeVore et al.\ \cite{devore_optimal_1989}
that the rate is optimal in the setting that the map which assigns to a function
$f \in C^k \left(\Omega_n; \CC\right)$ the weights of the approximating network is continuous,
as is the case in \Cref{main_2}.
However, it might be possible to achieve a better degree of approximation
if this map is \emph{not} required to be continuous, which is discussed in the second part of this section.
Proofs for all the statements in this section are given in \Cref{optimality_section_reordered,sec:main_4_reordered,sec:main_5_reordered}. 

\textbf{Continuous weight selection.} We begin by considering the case of continuous weight selection.
As mentioned in the introduction, this is a natural assumption, since in classical training algorithms
such as (S)GD, continuous operations based on samples $f(x_j)$ are used to adjust the weights.

In \cite[Theorem 4.2]{devore_optimal_1989} a lower bound of $m^{-k/s}$ is established
for the rate of approximating functions $f$ of Sobolev regularity $W^{k,\infty}$
in the following very general setting:
The set of functions that is used for approximation can be parametrized using $m$ (real) parameters
and the map that assigns to $f \vphantom{\sum_i}$ the parameters of the approximating function is continuous
with respect to \emph{some} norm on $\strut W^{k,\infty}([-1,1]^s; \RR)$.
A detailed version of the proof of that statement (for $C^k$ instead of $W^{k,\infty}$)
is contained in \Cref{optimality_section_reordered}.
A careful transfer of this result to the complex-valued setting yields the following theorem
(see also \Cref{optimality_section_reordered}).

\begin{theorem}\label{thm:opti_conti}
Let $n,k \in \NN$. Then there exists a constant $c = c(n,k)>0$ with the following property:
For any $m \in \NN$, any map $\overline{a} : C^k (\Omega_n; \CC) \to \CC^m$ that is continuous
with respect to some norm on $C^k(\Omega_n; \CC)$ and any map $M  : \CC^m \to C(\Omega_n ; \CC)$ we have
\begin{equation*}
  \sup_{f \in C^k(\Omega_n ; \CC), \Vert f \Vert _{C^k (\Omega_n ; \CC)} \leq 1} \,\,
    \Vert f - M (\overline{a}(f)) \Vert_{L^\infty (\Omega_n ; \CC)}
  \geq c \cdot m^{-k/(2n)}.
\end{equation*}
\end{theorem}

The interpretation of \Cref{thm:opti_conti} is as follows:
If one approximates $C^k$-functions on $\Omega_n$ using a set of functions that can be parametrized
using $m$ (complex) parameters and one assumes that the weight assignment is continuous,
one cannot achieve a better rate of approximation than $m^{-k/(2n)}$.
As a special case it can be deduced that the approximation rate is at most $m^{-k/(2n)}$
when approximating $C^k$-functions using shallow CVNNs with $m$ parameters
under continuous weight assignment (see \Cref{main_optimality}).
One can show that this even holds for \emph{deep} CVNNs.
Hence, for continuous weight selection the rate proven in \Cref{main_2} is optimal,
even in the class of (possibly) deep networks.

\textbf{Discontinuous weight selection.}
When we drop the continuity assumption on the selection of the weights,
the behavior of the optimal approximation rate is more subtle.
Precisely, we show that there are activation functions for which the rate of approximation
can be improved to $m^{-k/(2n-1)}$.
On the other hand, we also show that there are activation functions for which an improvement
of the approximation rate (up to logarithmic factors) is not possible.

\begin{theorem}\label{main_4}
  There exists a function $\phi \in C^\infty(\CC;\CC)$ which is non-polyharmonic
  with the following additional property:
  For every $k \in \NN$ and $n \in \NN$ there exists a constant $c = c(n,k)>0$
  such that for any $m \in \NN$ and $ f \in C^k \left(\Omega_n; \CC\right)$
  there is a first layer $\Phi \in \mathcal{F}^\phi_{n,m}$ and a vector $\sigma \in \CC^m$ such that
  \begin{equation*}
     \left\Vert f - \sigma^T \Phi \right\Vert_{L^\infty \left(\Omega_n; \CC\right)}
     \leq c \cdot m^{-k /(2n-1)} \cdot \left\Vert f \right\Vert_{C^k \left( \Omega_n; \CC\right)}.
  \end{equation*}
\end{theorem}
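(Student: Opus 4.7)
The exponent $1/(2n-1)$ exceeds the continuous-selection exponent $1/(2n)$ from Theorem~\ref{main_3} by one effective dimension, which is the signature of the Maiorov--Pinkus integer-encoding trick. The plan is to design a single $\phi$ whose restrictions to pairwise disjoint disks $D(3\ell,1/2)$ ($\ell \in \NN$) realize a countable dense family of univariate polynomials (applied to the real part of the argument), so that the integer bias $3\ell_j$ turns every neuron into a look-up of an arbitrary univariate polynomial. Combining this with Pinkus' classical fact that every polynomial of total degree $N$ in $d$ real variables is a sum of $\mathcal{O}(N^{d-1})$ univariate ridges, for $d=2n$ one needs only $m\sim N^{2n-1}$ neurons to reproduce a degree-$N$ polynomial exactly; balancing against Jackson's error $N^{-k}$ then gives the rate $m^{-k/(2n-1)}$.

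\textbf{Construction of $\phi$.} Fix $\chi\in C^\infty_c(\CC;\RR)$ with $\chi\equiv 1$ on $\overline{D(0,1/4)}$ and $\supp\chi\subseteq D(0,1/2)$, and let $(p_\ell)_{\ell\in\NN}$ enumerate all univariate polynomials with coefficients in $\QQ+\ii\QQ$. Define
\begin{equation*}
\phi(w) \defeq \chi(w)\,\ee^{|w|^2} \;+\; \sum_{\ell=1}^{\infty} \chi(w-3\ell)\,p_\ell\bigl(\RE(w-3\ell)\bigr), \qquad w\in\CC.
\end{equation*}
The sum is locally finite because the supports $D(3\ell,1/2)$ are pairwise disjoint and disjoint from $\supp\chi$, so $\phi\in C^\infty(\CC;\CC)$. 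On the nonempty open set $D(0,1/4)$ we have $\phi(w)=\ee^{|w|^2}$, whose iterated Laplacians never vanish; hence $\phi$ is non-polyharmonic on a nonempty open set.

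\textbf{Approximation.} Fix $f\in C^k(\Omega_n;\CC)$ with $\|f\|_{C^k}\leq 1$, and set $N\defeq\lfloor c_0(n)\, m^{1/(2n-1)}\rfloor$. Identifying $\Omega_n\cong[-1,1]^{2n}$ via $z\mapsto(\RE z,\IM z)$, Jackson's theorem produces a polynomial $P$ of total degree $\leq N$ in those $2n$ real variables satisfying $\|f-P\|_\infty\leq c_1(n,k)\,N^{-k}$, with coefficients controlled by $\|f\|_{C^k}$. Pinkus' ridge decomposition then yields
\begin{equation*}
P(z)\;=\;\sum_{j=1}^{r} g_j\bigl(v_j\cdot(\RE z,\IM z)\bigr),\qquad r\leq c_2(n)\,N^{2n-1}\leq m,
\end{equation*}
with unit vectors $v_j\in\RR^{2n}$ and univariate polynomials $g_j$ of degree $\leq N$ whose coefficients are bounded in terms of $\|P\|_\infty$. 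Choose a small $\lambda=\lambda(n)>0$ and $\rho_j\in\CC^n$ with $\RE(\rho_j^T z)=\lambda\,v_j\cdot(\RE z,\IM z)$ and $|\rho_j^T z|\leq 1/4$ on $\Omega_n$ (always possible because $\rho\mapsto\RE(\rho^T\cdot)$ surjects onto real linear functionals of $(\RE z,\IM z)$, with the imaginary part independently tunable). By density, pick $\ell_j\in\NN$ with $\|p_{\ell_j}-g_j(\cdot/\lambda)\|_{L^\infty[-\lambda,\lambda]}\leq m^{-1-k/(2n-1)}$. Since $\rho_j^T z+3\ell_j\in D(3\ell_j,1/4)$, where $\chi\equiv 1$, we get $\phi(\rho_j^T z+3\ell_j)=p_{\ell_j}(\RE(\rho_j^T z))$, so
\begin{equation*}
\Bigl\|f-\sum_{j=1}^{m}\phi(\rho_j^T z+3\ell_j)\Bigr\|_\infty \;\leq\; c_1 N^{-k}+m\cdot m^{-1-k/(2n-1)} \;\lesssim\; m^{-k/(2n-1)}\,\|f\|_{C^k}.
\end{equation*}

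\textbf{Main obstacle.} The crux is a \emph{quantitative} version of Pinkus' decomposition, giving polynomial-in-$N$ bounds on the coefficients of the $g_j$ in terms of $\|P\|_\infty$. Once those are in hand, the density of rational polynomials absorbs any such loss into the per-neuron approximation error $m^{-1-k/(2n-1)}$, and the rest is bookkeeping of constants; the fact that the network has no output weights is handled automatically, since the required output coefficients are swallowed into the univariate polynomials $g_j$ (and hence into the choice of $\ell_j$).
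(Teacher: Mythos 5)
Your construction and argument follow essentially the same route as the paper's: encode a countable dense family of polynomials in disjoint neighborhoods of the integer shifts $3\ell$, reduce the approximation to an exact ridge decomposition (Pinkus) of a Chebyshev/Jackson approximant, and select the integer indices $\ell_j$ by density. The differences in detail (you encode univariate polynomials of $\RE(\cdot)$ rather than complex polynomials in $z,\overline z$; you seed non-polyharmonicity via $e^{|w|^2}$ rather than $e^{\RE(z)}$; the interval in your density step should read $[-\lambda\sqrt{2n},\lambda\sqrt{2n}]$ rather than $[-\lambda,\lambda]$, since $|v_j\cdot(\RE z,\IM z)|\leq\sqrt{2n}$ on $\Omega_n$) are cosmetic.

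However, the ``main obstacle'' you flag is not a real one. Approximating the fixed function $g_j(\cdot/\lambda)$ to within $m^{-1-k/(2n-1)}$ by a rational-coefficient polynomial in uniform norm on a compact interval is a pure density statement and requires no bound on the coefficients of $g_j$ whatsoever; density lets you choose $p_{\ell_j}$ arbitrarily close regardless of how large $g_j$ is. Correspondingly, the paper's ridge lemma (\Cref{thm: ridge}) only asserts the exact algebraic identity $P=\sum_j p_j(a_j^T x)$ with fixed, $f$-independent directions $a_j$ and \emph{no} quantitative control on the $p_j$, and its proof of the theorem then selects the indices $\ell_j$ by Stone--Weierstrass exactly as you do. Dropping that worry, your proof is complete.
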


\begin{proof}[Sketch of proof]
The function $\phi$ is constructed in the following way:
Take a countable dense subset $\left\{u_\ell\right\}_{\ell \in \NN}$ of $C(\Omega_1 ; \CC)$,
for instance the set of all polynomials in $z$ and $\overline{z}$ with coefficients in $\QQ + i \QQ$.
Define $\phi$ in a way such that
\begin{equation*}
  \phi(z + 3\ell ) = u_\ell(z) 
\end{equation*}
for every $z \in \Omega_1$ and $\ell \in \NN$.
Furthermore, to ensure that $\phi$ is non-polyharmonic,
let $\phi(z) = e^{\mathrm{Re}(z)}$ for every $z \in \Omega_1$.
The smoothness of $\phi$ can be accomplished by multiplying with a smooth \emph{bump function};
see \Cref{special_acti_func} for details.
The construction of $\phi$ is illustrated in \Cref{fig:illus}.

Let then $f \in C^k(\Omega_n;\CC)$ be arbitrary.
General results from the theory of \emph{ridge functions} \cite{gordon_best_2001} show
that there exist $b_1,..., b_m \in \CC^n$ and $g_1,..., g_m \in C(\Omega_1; \CC)$ such that
\begin{equation*}
  \left\Vert f(z) - \sum_{j=1}^m g_j \left( b_j^T \cdot z \right)\right\Vert_{L^\infty \left(\Omega_n; \CC\right)}
  \leq c \cdot m^{-k /(2n-1)} \cdot \left\Vert f \right\Vert_{C^k \left( \Omega_n; \CC\right)};
\end{equation*}  
see \Cref{ridge_approx,sec: ridge_reordered} for a detailed proof of this fact.
Approximating the functions $g_j$ by suitable functions $u_{\ell_j}$
and expressing those functions via $\phi(\bullet + 3\ell_j)$ yields the claim.
\end{proof}

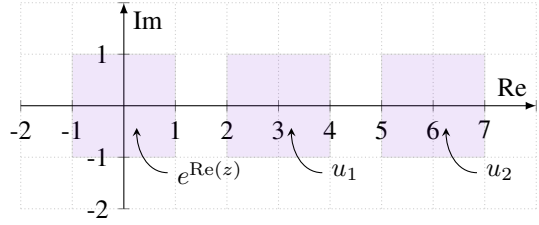
\begin{figure}[t]
\centering
\begin{tikzpicture}
\begin{axis}[
  axis lines=middle,
  axis equal image,
  axis line style = {-latex},
  xmin=-2,xmax=8,ymin=-2,ymax=2,
  xtick distance = 1,
  ytick distance= 1,
  xticklabels = {\empty, -2, -1,0, 1,2,3,4,5,6,7,\empty},
  yticklabels = {\empty, -2 , -1, 0 , 1 , \empty},
  xlabel=Re,
  ylabel=Im,
  grid=major,
  grid style={thin,densely dotted,black!20}]
  \definecolor{mycolor}{RGB}{102,0,204}
 \draw[fill = mycolor, opacity = 0.1](axis cs:-1,-1) rectangle (axis cs: 1, 1) ;
 \draw[fill = mycolor, opacity = 0.1](axis cs: 2,-1) rectangle (axis cs: 4, 1) ;
  \draw[fill = mycolor, opacity = 0.1](axis cs: 5,-1) rectangle (axis cs: 7, 1) ;
  
  \node[anchor=west] (source1) at (axis cs:0.85, -1.3){ $e^{\mathrm{Re}(z)}$};
  \node (destination1) at (axis cs:0.25, -0.25){};
  \draw[->,>=stealth](source1) to [out=180,in=-90] (destination1);
  
  \node[anchor=west] (source2) at (axis cs:3.85, -1.3){ $u_1$};
  \node (destination2) at (axis cs:3.25, -0.25){};
  \draw[->,>=stealth](source2) to [out=180,in=-90] (destination2);
  
    \node[anchor=west] (source3) at (axis cs:6.85, -1.3){ $u_2$};
  \node (destination3) at (axis cs:6.25, -0.25){};
  \draw[->,>=stealth](source3) to [out=180,in=-90] (destination3);
\end{axis}
\end{tikzpicture}
\caption{Illustration of the construction of the activation function in \Cref{main_4}.}
\label{fig:illus}
\end{figure}

The preceding theorem showed that there exists an activation function for which the rate
in \Cref{main_2} can be strictly improved,
if one allows a discontinuous weight selection.
In contrast, the following theorem shows for a certain (quite natural) activation function
that the rate $m^{-k/(2n)}$ \emph{cannot} be improved (up to logarithmic factors),
even if one allows a discontinuous weight assignment.

\begin{theorem}\label{main_5}
   Let $n,k \in \NN$ and
  \begin{equation*}
    \phi: \quad \CC \to \CC, \quad \phi(z) \defeq \frac{1}{1+e^{-\mathrm{Re}(z)}}.
  \end{equation*}
  Then $\phi$ is smooth but non-polyharmonic.
  Furthermore, there exists a constant $c = c(n,k) > 0$ with the following property:
  For any $m \in \NN_{\geq 2}$ there exists a function $f \in C^k \left(\Omega_n ; \CC\right)$
  with $\Vert f \Vert_{C^k (\Omega_n; \CC)} \leq 1$, such that for every $\Phi \in \mathcal{F}^\phi_{n,m}$ and $\sigma \in \CC^m$ we have
  \begin{equation*}
      \left\Vert f - \sigma^T \Phi\right\Vert_{L^\infty \left(\Omega_n ; \CC\right)}
      \geq c \cdot \left(m \cdot \ln (m)\right)^{-k/(2n)}.
  \end{equation*}
\end{theorem}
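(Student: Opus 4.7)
The plan is to exploit the fact that $\phi(z) = \sigma(\mathrm{Re}(z))$, where $\sigma(t) \defeq 1/(1+e^{-t})$ is the standard real sigmoid. This means every CVNN built from $\phi$ is, in effect, a pair of real-valued sigmoid networks on $[-1,1]^{2n}$ sharing the same hidden neurons, so the desired lower bound will follow by reduction to a classical near-optimality lower bound for real sigmoid networks.

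First I would carry out this reduction explicitly. Writing $\rho_j = a_j + i b_j$ with $a_j, b_j \in \RR^n$, $\eta_j = c_j + i d_j$ with $c_j, d_j \in \RR$, and $z = x + i y \in \Omega_n$, and setting $w \defeq (x, y) \in [-1,1]^{2n}$ together with $\xi_j \defeq (a_j, -b_j) \in \RR^{2n}$, a direct computation gives $\mathrm{Re}(\rho_j^T z + \eta_j) = \xi_j^T w + c_j$, and hence $\phi(\rho_j^T z + \eta_j) = \sigma(\xi_j^T w + c_j)$. Splitting $\sigma_j = \alpha_j + i \beta_j$ with $\alpha_j, \beta_j \in \RR$, the real part of the CVNN output equals $u(w) \defeq \sum_{j=1}^m \alpha_j \, \sigma(\xi_j^T w + c_j)$, which is an arbitrary real sigmoid network with $m$ hidden neurons on $[-1,1]^{2n}$ and arbitrary real parameters.

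Next I would choose $f$ to be purely real-valued, namely $f(x+iy) \defeq g(x,y)$ for a suitable $g \in C^k([-1,1]^{2n}; \RR)$ with $\|g\|_{C^k} \leq 1$; under the canonical identification $\Omega_n \cong [-1,1]^{2n}$ the norms $\|f\|_{C^k(\Omega_n; \CC)}$ and $\|g\|_{C^k([-1,1]^{2n}; \RR)}$ coincide. Since $f$ is real, any complex approximant $\Phi(f) = u + iv$ obeys $\|f - \Phi(f)\|_{L^\infty(\Omega_n; \CC)} \geq \|g - u\|_{L^\infty([-1,1]^{2n})}$, so it remains to exhibit $g$ in the unit $C^k$-ball that cannot be approximated better than $c (m \ln m)^{-k/(2n)}$ by any real sigmoid network with $m$ hidden neurons.

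The existence of such a $g$ is the content of the near-optimality lower bound of Maiorov and Meir for real sigmoid network approximation, which rests on a pseudo-dimension bound of order $m \ln m$ for the class of real sigmoid networks with $m$ neurons (Goldberg--Jerrum) combined with a standard $\varepsilon$-entropy / volume comparison against the unit ball of $C^k$. The main technical obstacle is locating and invoking this real-valued lower bound in exactly the required form, that is, with $L^\infty$ output norm, arbitrary real weights, and the unit $C^k$-ball on $[-1,1]^{2n}$ as the target class; once that is in place, the reduction to the complex-valued setting is purely arithmetical, and the only remaining bookkeeping is to track how the constant $c$ depends on $n$ and $k$.
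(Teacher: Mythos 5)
Your reduction from the complex to the real setting is exactly what the paper does: writing $\phi(z) = \gamma(\mathrm{Re}(z))$ with the real sigmoid $\gamma$, computing $\mathrm{Re}(\rho_j^T z + \eta_j) = \xi_j^T w + c_j$ for suitable real $\xi_j, c_j$, taking $f$ real-valued, and bounding the $\CC$-valued $L^\infty$ error from below by the $\RR$-valued error of the real part. That part of your argument is correct and matches \Cref{sigmoidcomplex} in the paper.

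The gap is the real lower bound that you defer to the literature. The paper does not cite it; it proves it from scratch as \Cref{sigmoidoptimality}, and the method differs from what you sketch. Rather than a pseudo-dimension bound of order $m\ln m$ plus an $\eps$-entropy/volume comparison (neither of which is quite accurate: the Goldberg--Jerrum/Karpinski--Macintyre-type bounds for sigmoid networks give VC-dimension of order $W^2$ in the number of parameters, not $m\ln m$; and the Maiorov--Meir lower bound is formulated for Sobolev balls and $L^p$ rather than the $C^k$-unit-ball in $L^\infty$), the paper constructs an explicit family of bump functions indexed by sign patterns $\{z_\alpha\}_{\alpha \in \{-N,\dots,N\}^n}$ with controlled $C^k$-norm, shows that any sufficiently accurate network must shatter the grid $\{-N,\dots,N\}^n$, and invokes \cite[Theorem~8.11]{anthony_neural_1999}, which bounds the VC-dimension of the relevant hypothesis class by $O(m\ln N)$ --- crucially depending on the size $N$ of the test grid, not only on $m$. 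This dependence on $N$ is what produces the $\ln$ factor in the final $(m\ln m)^{-k/(2n)}$ rate after the algebraic conversion carried out in the last corollary. So while your overall plan is sound, the statement "the existence of such a $g$ is the content of the near-optimality lower bound of Maiorov and Meir" is not a safe citation; you would need to either verify that an existing theorem delivers the $L^\infty$ / $C^k$-ball / arbitrary-weights form with the $(m\ln m)^{-k/s}$ rate, or reproduce the bump-function-plus-VC argument that the paper gives.
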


\begin{proof}[Sketch of proof]
The idea of the proof is based on that of \cite[Theorem 4]{yarotsky_error_2017}
but instead of the bound for the VC dimension of ReLU networks used in \cite{yarotsky_error_2017},
we will employ a bound for the VC dimension stated in \cite[Theorem 8.11]{anthony_neural_1999}
using the real sigmoid function.
For a detailed introduction to the concept of the VC dimension and related topics,
see for instance \cite[Chapter 6]{shalev-shwartz_understanding_2014}.

A technical reduction from the complex to the real case (see \Cref{sec:main_5_reordered})
shows that it suffices to show the following:
If $\varepsilon \in (0,\frac{1}{2})$ and $m \in \NN$ are such that
for every $f \in C^k \left([-1,1]^n ; \RR\right)$ with $\left\Vert f \right\Vert_{C^k\left([-1,1]^n; \RR\right)} \leq 1$
there exists a real-valued shallow network $\mathcal{N}$ with $\gamma(x) = \frac{1}{1+e^{-x}}$
as activation function satisfying $\Vert f - \mathcal{N}\Vert_{L^\infty \left([-1,1]^n ; \RR\right)} \leq \varepsilon$,
then necessarily
\begin{equation*}
  m \geq c \cdot \frac{\varepsilon^{-n/k}}{  \mathrm{ln}\left( 1/\varepsilon\right)},
\end{equation*}
where the constant $c$ only depends on $n$ and $k$.

To show that the latter claim holds, we assume that $\varepsilon$ and $m$ have the property stated above.
Take $N \in \NN$ such that $N^{-k} \asymp \varepsilon$ and consider the grid 
\begin{equation*}
  \mathcal{G} \defeq \frac{1}{N} \{-N, ..., N\}^n \subseteq [-1,1]^n.
\end{equation*}
For every $\alpha \in \mathcal{G}$ we pick a number $z_\alpha \in \{0,1\}$ arbitrarily
and construct a map $f \in C^\infty ([-1,1]^n ; \RR)$ satisfying $f(\alpha) = z_\alpha$
for very $\alpha \in \mathcal{G}$.
Scaling of $f$ to $\tilde{f}$ ensures $\Vert \tilde{f} \Vert_{C^k ([-1,1]^n ; \RR)} \leq 1$,
but then $\tilde{f}(\alpha) = c_0 \cdot z_\alpha \cdot N^{-k}$ where $c_0 = c_0(n,k)>0$.
By assumption, we can infer the existence of a shallow real-valued neural network $\mathcal{N}$
with $\gamma$ as activation function and $m$ hidden neurons satisfying
$\Vert \tilde{f} - \mathcal{N} \Vert_{L^\infty([-1,1]^n ; \RR)} \leq \varepsilon$.
But this shows
\begin{equation*}
  \mathcal{N}(\alpha)
  \begin{cases}
    > \tilde{c} N^{-k}, & \mathrm{if} \ z_\alpha = 1, \\
    < \tilde{c} N^{-k}, & \mathrm{if} \ z_\alpha = 0
  \end{cases}
  \quad \text{for all } \alpha \in \mathcal{G}
\end{equation*}
with a constant $\tilde{c} = \tilde{c}(n,k)> 0$.
Since the $z_\alpha$ are arbitrary, it follows that the set
\begin{equation*}
  H
  \defeq \left\{
           \fres{\mathbbm{1}(\mathcal{N} > \tilde{c} N^{-k})}{\mathcal{G}}
           :
           \ \mathcal{N} \text{ shallow NN with activation $\gamma$ and $m$ hidden neurons}
         \right\}
\end{equation*}
\emph{shatters} the whole grid $\mathcal{G}$.
This yields $\mathrm{VC}(H) \geq \vert \mathcal{G} \vert = (2N + 1)^n$.
On the other hand, the bound provided by \cite[Theorem 8.11]{anthony_neural_1999}
for linear threshold networks yields $\mathrm{VC}(H) \lesssim m \cdot \ln(N)$.
Combining the two bounds and using $N^{-k} \asymp \varepsilon$ yields the claim.
\end{proof}

\section{Tractability of the considered problem in terms of the input dimension}\label{sec:cursetrac}

In this section we discuss the \emph{tractability} (in terms of the input dimension $n$)
of the considered problem, i.e., the dependence of the approximation error on $n$.
We show a novel result stating that, assuming a continuous weight selection,
the problem of approximating $C^k$-functions is \emph{intractable},
i.e., that the number of neurons that is required to achieve a non-trivial approximation error
is necessarily exponential in $n$.
In the literature this is referred to as the \emph{curse of dimensionality}.
The proof of the theorem combines ideas from \cite{devore_optimal_1989} and \cite{NOVAK2009398}
and is contained in \Cref{sec:intrac_reordered}.

\begin{theorem}\label{thm:intrac}
Let $s \in \NN$.
With $\Vert \cdot \Vert_{C^k([-1,1]^s;\RR)}$ defined similarly to \eqref{eq:ckdef}, we write
\begin{equation*}
  \Vert f \Vert_{C^\infty([-1,1]^s ; \RR)}
  \defeq \underset{k \in \NN}{\sup} \ \Vert f \Vert_{C^k([-1,1]^s; \RR)} \in [0, \infty]
\end{equation*}
for any function $f \in C^\infty([-1,1]^s ; \RR)$
and denote by $C^{\infty,\ast,s}$ the set of all $f \in C^\infty([-1,1]^s;\RR)$ for which this expression is finite. 
Let $\overline{a}: C^{\infty, \ast, s}\to \RR^{2^s - 1}$ be continuous with respect to some norm
on $C^{\infty, \ast, s}$ and moreover, let $M: \RR^{2^s - 1} \to C([-1,1]^s; \RR)$ be an arbitrary map.
Then it holds
\begin{equation*}
  \underset{\Vert f \Vert_{C^{\infty}([-1,1]^s;\RR)} \leq 1}{\underset{f \in C^{\infty, \ast,s}}{\sup}}
    \Vert f - M(\overline{a}(f))\Vert_{L^\infty([-1,1]^s ; \RR)}
  \geq 1.
\end{equation*}
\end{theorem} 
Note that \Cref{thm:intrac} is formulated for real-valued functions but can be transferred
to the complex-valued setting (see \Cref{corr:intrac_complex}).
We decided to include the real-valued statement because it is expected to be of greater interest
in the community than the complex-valued analog.
Moreover, we stress that \Cref{thm:intrac} is not limited to the class of shallow neural networks
but refers to any function class that is parametrizable using finitely many parameters
(in particular, e.g., the class of neural networks with possibly more than one hidden layer).

We now examine in what way the constant $c$ appearing in \Cref{main_2} suffers from the curse of dimensionality.
To this end, it is convenient to rewrite the result from \Cref{main_2} as
\begin{equation*}
  \sup_{\Vert f \Vert_{C^k(\Omega_n; \CC)} \leq 1} \ \ \underset{}{\inf_{\Phi \in \mathcal{F}^\phi_{n,m},\sigma \in \CC^m}}
    \Vert f - \sigma^T \Phi \Vert_{L^\infty(\Omega_n ; \CC)}
  \leq \left(\tilde{c} \cdot m\right)^{-k/(2n)}
\end{equation*}
where the constant $\tilde{c} = \tilde{c}(n,k)> 0$ is independent of $m$.
Writing the result in that way, one sees immediately that, if one seeks to have a worst-case approximation error
of less than $\varepsilon>0$, it is sufficient to take $m = \left\lceil\frac{1}{\tilde{c}} \cdot \varepsilon^{-(2n)/k} \right\rceil$
neurons in the hidden layer of the network.
\Cref{corr:const_intrac} shows that it \emph{necessarily} holds $\tilde{c} \leq 16 \cdot  2^{-n}$ and therefore,
the constant $\tilde{c}$ unavoidably suffers from the curse of dimensionality.
An analysis of the constant
(where we refer to \Cref{ck_functions_reordered,sec:const_bound_reordered,sec:fourier_reordered} for the details)
shows that in our case we can establish the bound $\tilde{c}(n,k) \geq \exp(-C \cdot n^2) \cdot k^{-4n}$
with an \emph{absolute} constant $C>0$.
We remark that, since the constant suffers from the curse of dimensionality in any case,
we have not put much effort into optimizing the constant; there is therefore probably ample room for improvement.

\section{Limitations}

To conduct a comprehensive evaluation of machine learning algorithms,
one must analyze the questions of approximation, generalization, and optimization through training algorithms.
The present paper, however, only focuses on the aspect of approximation.
Analyzing if the proven approximation rate can be attained with learning algorithms
such as (stochastic) gradient descent falls outside the scope of this paper.
Furthermore, the examination of approximation rates under possibly discontinuous weight assignment
is not yet fully resolved by our results.
It is an open question which rate is optimally achievable in that case,
depending on the choice of the activation function, and specifically in distinguishing
between shallow and deep networks. We want to mention the two following points
which indicate that this is a quite subtle question:
\begin{enumerate}
\item For deep NNs (with more than two hidden layers) with general smooth activation function,
      it is \emph{not possible} to derive any non-trivial lower bounds in the setting of unrestricted weight assignment,
      since there exists an activation function with the property that NNs of \emph{constant size}
      using this activation function can approximate any continuous function to arbitrary precision
      (see \cite[Theorem 4]{MAIOROV199981}).
      Note that \cite{MAIOROV199981} considers real-valued NNs, but the results can be transferred to CVNNs
      with a suitable choice of the activation function.

\item In the real-valued case, fully general lower bounds for the approximation capabilities of shallow NNs
      have been derived by using results from \cite{gordon_best_2001}
      regarding the approximation properties of so-called ridge functions,
      i.e., functions of the form $\sum_{j=1}^m \phi_j(\langle a_j, x \rangle)$
      with $a_j \in \RR^d$ and each $\phi_j: \RR \to \RR$.
      It is an interesting problem to generalize these results to higher-dimensional ridge functions
      of the form $\sum_{j=1}^m \phi_j(A_j x)$, where each $\phi_j: \RR^s \to \RR$ and $A_j \in \RR^{s \times d}$.
      This would imply lower bounds for shallow CVNNs.
      However, such a generalization seems to be highly non-trivial and is outside the scope of the present paper.
\end{enumerate}

\section{Conclusion}

This paper analyzes error bounds for the approximation of complex-valued $C^k$-functions
by means of complex-valued neural networks with smooth and non-polyharmonic activation functions.
It is demonstrated that complex-valued neural networks with these activation functions
achieve the \emph{identical} approximation rate as real-valued networks that employ
smooth and non-polynomial activation functions.
This is an important theoretical finding, since CVNNs are on the one hand more restrictive
than real-valued neural networks (since the mappings between layers should be $\CC$-linear
and not just $\RR$-linear), but on the other hand more versatile,
since the activation function is a mapping from $\CC$ to $\CC$
(i.e., from $\RR^2 \to \RR^2$) rather than from $\RR$ to $\RR$ as in the real case.
Additionally, it is established that the proven approximation rate is optimal if one assumes
a continuous weight selection.
In summary, if one focuses on the approximation rate for $C^k$-functions,
CVNNs have the same excellent approximation properties as real-valued networks.

The behavior of the approximation rate for unrestricted weight selection is more subtle.
It is shown that a rate of $m^{-k/(2n-1)}$ can be achieved for certain activation functions (\Cref{main_4})
but in general, one cannot improve on the rate that is attainable for continuous weight selection (\Cref{main_5}).   

While the proven approximation \emph{rate} is optimal under the assumption of continuous weight selection,
the involved constants suffer from the curse of dimensionality.
\Cref{sec:cursetrac}, however, shows that this is inevitable in the given setting. 

Such theoretical approximation results contribute to the mathematical understanding of Deep Learning.
The remarkable approximation-theoretical properties of neural networks can be seen as one reason
why neural networks provide outstanding results in many applications.

\newpage

\textbf{Acknowledgments.}
PG and FV acknowledge support by
the German Science Foundation (DFG) in the context of the Emmy Noether junior research
group VO 2594/1-1. FV acknowledges support by the Hightech Agenda Bavaria. 

\printbibliography

\appendix

\newpage
\section{Notation, Wirtinger derivatives and special activation functions}
\subsection{Notation and Wirtinger derivatives}
\label{sec:notation_reordered}
Throughout the paper, multi-indices (i.e., elements of $\NN_0^n$) are denoted using boldface. For $\m \in \NN_0^n$ we have the usual notation $\vert \m \vert = \sum_{j=1}^n \m_j$. For a number $m \in \NN_0$ and another multi-index $\pp \in \NN_0^n$ we write $\m \leq m$ iff $\m_j \leq m$ for every $j \in \{1,...,n\}$ and $\m \leq \pp$ iff $\m_j \leq \pp_j$ for every $j \in \{1,...,n\}$. Furthermore we write
\begin{equation*}
    \binom{\pp}{\rr} \defeq \prod_{j = 1}^n \binom{\pp_j}{\rr_j}
\end{equation*}
for two multi-indices $\pp,  \rr \in \NN_0^n$ with $\rr \leq \pp$. For a complex vector $z \in \CC^n$ we write 
\begin{equation*}
    z^\m \defeq \prod_{j=1}^n z_j^{\m_j} \quad \text{and} \quad \overline{z}^\m \defeq \prod_{j=1}^n \overline{z_j}^{\m_j}.
\end{equation*}
For a point $x \in \RR^n$ and $r>0$ we define
\begin{equation*}
    B_r(x) \defeq \left\{ y \in \RR^n : \ \left\Vert x - y \right\Vert < r\right\}
\end{equation*}
with $\Vert \cdot \Vert$ denoting the usual Euclidean distance. This definition is analogously transferred to $\CC^n$.

$\CC^n$ is canonically isomorphic to $\RR^{2n}$ by virtue of the isomorphism
\begin{equation} \label{isomorphism_intro}
    \varphi_n: \quad \RR^{2n} \to \CC^n, \quad \left(x_1, ..., x_n, y_1, ..., y_n\right) \mapsto \left(x_1 +iy_1, ..., x_n +iy_n\right).
\end{equation}

The Wirtinger derivatives defined in \Cref{sec:prelim} have the following properties that we are going to use, which can be found for example in \cite[E.1a]{kaup_holomorphic_1983}. Here, we assume that $U \subseteq \CC$ is open and $f \in C^1(U; \CC)$.
\begin{enumerate}
    \item $\wirt$ and $\wirtq$ are both $\CC$-linear operators on the set $C^1(U; \CC)$.
    \item $f$ is complex-differentiable in $z \in U$ iff $\wirtq f(z) = 0$ and in this case the equality
    \begin{equation*}
        \wirt f(z) = f'(z)
    \end{equation*}
    holds true, with $f'(z)$ denoting the complex derivative of $f$ at $z$. 
    \item We have the conjugation rules
    \begin{align*}
        \overline{\wirt f} = \wirtq \overline{f} \quad \text{and} \quad \overline{\wirtq f} = \wirt \overline{f}.
    \end{align*}
    \item If $g \in C^1(U; \CC)$, the following product rules for Wirtinger derivatives hold for every $z \in U$:
    \begin{align*}
        \wirt (f \cdot g)(z)&= \wirt f (z) \cdot g(z) + f(z) \cdot \wirt g(z), \\
        \wirtq (f \cdot g)(z)&= \wirtq f (z) \cdot g(z) + f(z) \cdot \wirtq g(z).
    \end{align*}
    This product rule is not explicitly stated in \cite{kaup_holomorphic_1983} but follows easily from the product rule for $\frac{\partial}{\partial x}$ and $\frac{\partial}{\partial y}$.
    \item If $V \subseteq \CC$ is an open set and $g \in C^1(V;\CC)$ with $g(V) \subseteq U$, then the following chain rules for Wirtinger derivatives hold true:
    \begin{align*}
        \wirt(f \circ g) &= \left[(\wirt f) \circ g\right]\cdot \wirt g + \left[\left(\wirtq f\right)\circ g\right]\cdot \wirt \overline{g}, \\
        \wirtq(f \circ g) &= \left[(\wirt f) \circ g\right]\cdot \wirtq g + \left[\left(\wirtq f\right)\circ g\right]\cdot \wirtq \overline{g}.
    \end{align*}
    \item \label{laplace_ident} If $f \in C^2(U; \CC)$ then we have
    \begin{equation*}
        \Delta f (z) = 4\left(\wirt \wirtq f \right) (z)
    \end{equation*}
    for every $z \in U$, with $\Delta$ denoting the usual Laplace-Operator $\Delta = \partial^{(2,0)} + \partial^{(0,2)}$ (cf. \cite[equation (1.7)]{balk_polyanalytic_1991}).
\end{enumerate}

For an open set $U \subseteq \CC^n$, a function $f \in C^k(U; \CC)$ and a multi-index $\elll \in \NN_0^n$ with $\vert \elll \vert \leq k$ we write $\wirt^{\elll} f$ and $\wirtq^{\elll} f$ for the iterated Wirtinger derivatives according to the multi-index $\elll$. 
\begin{proposition}
\label{wirtreal}
    Let $U \subseteq \CC^n$ be an open set and $f \in C^k(U; \CC)$. Then, identifying $f$ with the function $f \circ \varphi_n$ with $\varphi_n$ as in \eqref{isomorphism_intro}, for any multi-indices $\m, \elll \in \NN_0^{n}$ with $\vert \m + \elll \vert \leq k$ we have the representation
    \begin{equation*}
        \wirt^\m \wirtq^{\elll} f(a) = \underset{\pp' + \pp'' = \m + \elll}{\underset{\pp = (\pp', \pp'') \in \NN_0^{2n}}{\sum}} b_{\pp} \left(\partial^{\pp} f \right)(a) \quad \forall a \in U
    \end{equation*}
    with complex numbers $b_{\pp}  \in \CC$ only depending on $\pp, \m$ and $\elll$ and writing $\pp = (\pp', \pp'')$ for the concatenation of the multi-indices $\pp'$ and $\pp'' \in \NN_0^n$. In particular, the coefficients do not depend on $f$.
\end{proposition}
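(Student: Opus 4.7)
The plan is a direct induction on the total order $|\m| + |\elll|$, exploiting the definition of the partial Wirtinger derivatives as $\CC$-linear combinations of real partial derivatives. The base case $|\m| + |\elll| = 0$ is trivial, since $\wirt^0 \wirtq^0 f = f = \partial^{(0,0,\dots,0)} f$, so the claim holds with $b_{(0,0,\dots,0)} = 1$.

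For the inductive step, assume the representation is valid for every pair of multi-indices of total order at most $s-1$, and let $\m, \elll \in \NN_0^n$ satisfy $|\m| + |\elll| = s \leq k$. Since $s > 0$, at least one of the multi-indices has a positive entry; say $\m_j \geq 1$ for some index $j$ (the case $\elll_j \geq 1$ is handled analogously by replacing $\wirt^{e_j}$ with $\wirtq^{e_j}$ below). Using the commutativity of Wirtinger derivatives, write
\begin{equation*}
\wirt^{\m} \wirtq^{\elll} f \;=\; \wirt^{e_j} \bigl( \wirt^{\m - e_j} \wirtq^{\elll} f \bigr),
\end{equation*}
and apply the induction hypothesis to the inner operator: there exist complex coefficients $c_{\q}$, depending only on $\q, \m, \elll$ (not on $f$), such that
\begin{equation*}
\wirt^{\m - e_j} \wirtq^{\elll} f(a) \;=\; \sum_{\substack{\q = (\q', \q'') \in \NN_0^{2n} \\ \q' + \q'' = \m + \elll - e_j}} c_{\q} \, (\partial^{\q} f)(a).
\end{equation*}

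Next I would apply $\wirt^{e_j} = \tfrac{1}{2}(\partial^{e_j} - i\partial^{e_{n+j}})$ term by term. Since $f \in C^k(U;\CC)$ and $|\q|+1 \leq k$, Schwarz's theorem on the symmetry of mixed partial derivatives applies, and we obtain
\begin{equation*}
\wirt^{e_j}\bigl(\partial^{\q} f\bigr)(a) \;=\; \tfrac{1}{2}\bigl(\partial^{\q + e_j} f\bigr)(a) \;-\; \tfrac{i}{2}\bigl(\partial^{\q + e_{n+j}} f\bigr)(a).
\end{equation*}
Here $\q + e_j = (\q' + e_j, \q'')$ satisfies $(\q' + e_j) + \q'' = \m + \elll$, and $\q + e_{n+j} = (\q', \q'' + e_j)$ satisfies $\q' + (\q'' + e_j) = \m + \elll$, so every resulting multi-index $\pp = (\pp', \pp'') \in \NN_0^{2n}$ obeys $\pp' + \pp'' = \m + \elll$, as required. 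Collecting like terms yields the desired representation with coefficients $b_{\pp}$ that are explicit polynomial expressions in the $c_{\q}$ (and hence depend only on $\pp$, $\m$, $\elll$ and not on $f$), completing the induction.

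There is no real obstacle here; the argument is entirely formal once one is careful about bookkeeping of multi-indices. The only minor point to watch is that the $C^k$-regularity assumption is exactly what is needed so that every real partial derivative of order at most $|\m + \elll|$ appearing in the expansion is well defined and so that Schwarz's theorem may be invoked to justify the commutativity of the Wirtinger derivatives used throughout the induction.
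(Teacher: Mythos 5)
Your proof is correct and takes essentially the same approach as the paper: reduce by one Wirtinger derivative, apply the induction hypothesis, and expand the remaining Wirtinger operator via its definition $\wirt^{e_j} = \tfrac12(\partial^{e_j} - i\partial^{e_{n+j}})$ term by term. The only cosmetic difference is that you run a single induction on the total order $|\m|+|\elll|$, whereas the paper organizes it as a nested induction (first on $\elll$ with $\m=0$, then on $\m$); the mechanism and bookkeeping are identical.
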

\begin{proof}
    The proof is by induction over $\m$ and $\elll$. We first assume $\m=0$ and show the claim for all $\elll \in \NN_0^n$ with $\vert \elll \vert \leq k$. In the case $\elll = 0$ there is nothing to show, so we assume the claim to be true for fixed $\elll \in \NN_0^n$ with $\vert \elll \vert < k$ and take $j \in \{1,...,n\}$ arbitrarily. Then we get
    \begin{align*}
        \wirtq^{\elll + e_j} f(a) &= \wirtq^{e_j} \wirtq ^{\elll} f(a) \overset{\text{IH}}{=} \underset{\pp' + \pp'' = \elll}{\sum_{\pp = (\pp', \pp'')\in \NN_0^{2n}}} b_{\pp}\wirtq^{e_j} \left(\partial^{\pp} f\right)(a) \\
&= \underset{\pp' + \pp'' = \elll}{\sum_{\pp= (\pp', \pp'') \in \NN_0^{2n}}} \frac{b_{\pp}}{2} \left(\partial^{(\pp' + e_j, \pp'')} f\right)(a) + \frac{ib_{\pp}}{2} \left(\partial^{(\pp', \pp'' + e_{j})}f\right)(a) \\
&=:\underset{\pp' + \pp'' = \elll + e_j}{\underset{\pp = (\pp', \pp'')\in \NN_0^{2n}}{\sum}} b_{\pp} \left(\partial^{\pp} f\right)(a) , 
    \end{align*}
    as was to be shown.

    Since we have shown the case $\m = 0$ we may assume the claim to be true for fixed $\m, \elll \in \NN_0^n$ with $\vert \m + \elll\vert < k$. Then we deduce
    \begin{align*}
        \wirt^{\m + e_j}\wirtq^{\elll} f (a) &= \wirt^{e_j} \wirt^\m \wirtq ^{\elll} f(a) \overset{\text{IH}}{=} \underset{\pp' + \pp'' = \m + \elll}{\sum_{\pp = (\pp', \pp'') \in \NN_0^{2n}}} b_{\pp}\wirt^{e_j} \left(\partial^{\pp} f\right)(a) \\
&= \underset{\pp' + \pp'' = \m + \elll}{\sum_{\pp = (\pp', \pp'') \in \NN_0^{2n}}} \frac{b_\pp}{2} \left(\partial^{(\pp' + e_j, \pp'')} f\right)(a) - \frac{ib_\pp}{2} \left(\partial^{(\pp', \pp'' + e_{j})}f\right)(a) \\
&=:\underset{\pp' + \pp'' = \m + \elll + e_j}{\underset{\pp = (\pp', \pp'') \in \NN_0^{2n}}{\sum}} b_{\pp} \left(\partial^{\pp} f\right)(a).
    \end{align*}
   Hence the claim follows using the principle of mathematical induction.
\end{proof}
\Cref{wirtreal} is technical but crucial: In the course of the paper we will need to approximate Wirtinger derivatives of certain functions. In fact, however, we will approximate real derivatives and take advantage of the fact that Wirtinger derivatives are linear combinations of these.

\subsection{Concrete examples of activation functions}
\label{concrete_activation_functions_reordered}

In this section we analyze concrete activation functions that are commonly used in practical applications of complex-valued neural networks. We are going to show that those activation functions are smooth and non-polyharmonic on some non-empty open subset of $\CC$. Our first result analyzes a family of ``real activation functions interpreted as complex activation functions''.
\begin{proposition}
\label{admissible}
    Let $\rho \in C^\infty(\RR; \RR)$ be non-polynomial and let $\psi : \CC \to \CC$ be defined as
    \begin{equation*}
        \psi(z) \defeq \rho(\RE(z)) \quad\text{resp.}\quad \psi(z) \defeq \rho(\IM(z)).
    \end{equation*}
    Then $\psi$ is smooth and non-polyharmonic.
\end{proposition}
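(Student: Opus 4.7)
The plan is to reduce the proposition to the classical result on smooth, non-polynomial real functions that is already referenced in the introduction. First, observe that $\psi$ is smooth on all of $\CC$ (since $\RE$, $\IM$ and $\rho$ are smooth), so the regularity requirement in the definition of $M$-admissibility is automatic for any $b \in \CC$ and any $M \in \NN_0$. It therefore suffices, for each $M \in \NN_0$, to produce $b \in \CC$ at which $\wirt^m \wirtq^\ell \psi(b) \neq 0$ for all $m, \ell \leq M$.

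The second step is an explicit Wirtinger computation. For $\psi(z) = \rho(\RE z)$, writing $z = x+iy$ gives $\partial^{(1,0)} \psi(z) = \rho'(x)$ and $\partial^{(0,1)}\psi(z) = 0$, so by the very definition of the Wirtinger operators,
\begin{equation*}
    \wirt \psi(z) \;=\; \wirtq \psi(z) \;=\; \tfrac{1}{2}\, \rho'(\RE z).
\end{equation*}
Since the right-hand side is again a function of $\RE z$ alone, an immediate induction on $m+\ell$ yields
\begin{equation*}
    \wirt^m \wirtq^\ell \psi(z) \;=\; 2^{-(m+\ell)}\, \rho^{(m+\ell)}(\RE z).
\end{equation*}
For $\psi(z) = \rho(\IM z)$ the analogous computation produces $\wirt \psi(z) = -\tfrac{i}{2}\rho'(\IM z)$ and $\wirtq \psi(z) = \tfrac{i}{2}\rho'(\IM z)$, and hence $\wirt^m \wirtq^\ell \psi(z) = (-i/2)^m (i/2)^\ell\, \rho^{(m+\ell)}(\IM z)$; the prefactor here is still nonzero. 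Consequently, verifying $M$-admissibility of $\psi$ at $b$ reduces to finding a single real number $t$ (namely $\RE b$ or $\IM b$, depending on the case) with $\rho^{(k)}(t) \neq 0$ for every $k = 0, 1, \ldots, 2M$.

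The third step is to invoke the classical fact cited on page~53 of \cite{donoghue_distributions_1969} and recalled in the introduction: since $\rho$ is smooth and non-polynomial on the open interval $\RR$, there exists $t_0 \in \RR$ at which $\rho^{(n)}(t_0) \neq 0$ for every $n \in \NN_0$. Taking $b \defeq t_0$ in the real-part case, respectively $b \defeq i\, t_0$ in the imaginary-part case, therefore simultaneously verifies $M$-admissibility of $\psi$ at $b$ for every $M \in \NN_0$, so $\psi$ is admissible.

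I do not anticipate a serious obstacle: the Donoghue result is used as a black box (and in fact only its finitary version, asserting the existence for each $M$ of some $t$ with the first $2M+1$ derivatives of $\rho$ nonzero, is actually needed), while the remaining ingredients are a direct Wirtinger calculation and a one-line induction.
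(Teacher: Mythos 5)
Your proof is correct and follows essentially the same route as the paper: compute $\wirt^m\wirtq^\ell\psi$ explicitly (reducing to $\rho^{(m+\ell)}$ up to a nonzero constant), then invoke the Donoghue result to find a point where all derivatives of $\rho$ are nonvanishing. The only cosmetic difference is that you track the exact complex prefactor $(-i/2)^m(i/2)^\ell$ in the imaginary-part case where the paper passes immediately to moduli, and you remark (correctly but unnecessarily) that only the finitary version of Donoghue's result is needed.
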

\begin{proof}
    Since $\psi$ depends only on the real-, resp. imaginary part of the input, we see directly from the definition of the Wirtinger derivatives that
    \begin{equation*}
        \wirt \psi(z) = \wirtq \psi(z)= \frac{1}{2} \rho' (\RE(z)) \quad \text{resp.} \quad \wirt \psi(z) = - \wirtq \psi(z)= -\frac{i}{2} \rho' (\IM(z)).
    \end{equation*}
    Hence we see for arbitrary $m, \ell \in \NN_0$ that
    \begin{equation*}
        \left\vert\wirt^m \wirtq^\ell \psi (z)\right\vert = \frac{1}{2^{m+\ell}} \left\vert\rho^{(m + \ell)} (\RE(z))\right\vert \quad\text{resp.} \quad  \left\vert\wirt^m \wirtq^\ell \psi (z)\right\vert = \frac{1}{2^{m+\ell}} \left\vert\rho^{(m + \ell)} (\IM(z))\right\vert.
    \end{equation*}
    Since $\rho$ is non-polynomial we can choose a real number $x$, such that $\rho^{(n)}(x) \neq 0$ for all $n \in \NN_0$ (cf. for instance \cite[p. 53]{donoghue_distributions_1969}). Thus,  $\wirt^m \wirtq^\ell \psi(z) \neq 0$ for all $m, \ell \in \NN_0$, whenever $ z \in \CC$ with $\RE(z) = x$, or $\IM(z) = y$, respectively. 
\end{proof}
Next, we consider the modReLU which was defined in \Cref{sec:prelim}.
\begin{theorem} \label{mod_relu_dev}
    Let $b \in (-\infty, 0)$. Writing $\sigma = \modrelu$, one has for every $z \in \CC$ with $\vert z \vert + b > 0$ the identity
    \begin{equation*}
        \left( \wirt^m \wirtq^\ell \sigma\right)(z) = \begin{cases}z + b\frac{z}{\vert z \vert},& m=\ell=0,\\1 + \frac{b}{2} \cdot \frac{1}{\vert z \vert},&m=1,\ell=0,\\ b \cdot q_{m,\ell} \cdot \frac{z^{\ell-m +1}}{\vert z \vert^{2\ell+1}},&m \leq \ell+1, \ell \geq 1, \\ b \cdot q_{m,\ell} \cdot \frac{\overline{z}^{m-\ell-1}}{\vert z \vert ^{2m-1}},& m \geq  \ell+1, m \geq 2\end{cases}
    \end{equation*}
    for every $m \in \NN_0$ and $\ell \in \NN_0$. Here, the numbers $q_{m,\ell}$ are non-zero and rational. Furthermore, note that all cases for choices of $m$ and $\ell$ are covered, by observing that we can either have the case $m \geq \ell +1$ (where either $m \geq 2$ or $m=1, \ell = 0$) or $m \leq \ell + 1$, where the latter case is again split into $\ell = 0$ and $\ell \geq 1$.
\end{theorem}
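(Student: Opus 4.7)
The plan is to exploit that on the open set $\{z \in \CC : \lvert z \rvert + b > 0\}$, the modReLU coincides with the smooth expression $\sigma(z) = z + b \cdot z\,(z\overline{z})^{-1/2}$, so every Wirtinger derivative can be computed directly. The cases $(m,\ell)=(0,0)$ and $(1,0)$ are immediate: the former reads off $\sigma$ itself, and for the latter a short product rule computation using $\wirt(z\overline{z})^{-1/2} = -\tfrac{1}{2}\overline{z}\,(z\overline{z})^{-3/2}$ gives $\wirt\sigma(z) = 1 + b\,\wirt[z(z\overline{z})^{-1/2}] = 1 + b/(2\lvert z\rvert)$. For every remaining case $m+\ell \geq 2$ one has $\wirt^m \wirtq^\ell z = 0$, so $\wirt^m \wirtq^\ell \sigma = b\cdot \wirt^m \wirtq^\ell [z\,g]$ with $g \defeq (z\overline{z})^{-1/2}$.

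Because $\wirt z = 1$, $\wirtq z = 0$, and all higher Wirtinger derivatives of $z$ vanish, the Leibniz rule collapses to
\begin{equation*}
\wirt^m \wirtq^\ell [z\,g] = z \cdot \wirt^m \wirtq^\ell g + m \cdot \wirt^{m-1}\wirtq^\ell g,
\end{equation*}
with the second summand interpreted as $0$ for $m = 0$. Hence everything reduces to the iterated Wirtinger derivatives of $g$. To evaluate these I propose to prove, by induction on $m$, the following general identity: for every $\alpha \in \RR$ and every $z \in \CC \setminus \{0\}$,
\begin{equation*}
\wirt^m \wirtq^\ell (z\overline{z})^\alpha = P_m(\alpha)\, P_\ell(\alpha) \cdot \begin{cases} z^{\ell - m}\,(z\overline{z})^{\alpha - \ell}, & m \leq \ell, \\ \overline{z}^{\,m - \ell}\,(z\overline{z})^{\alpha - m}, & m \geq \ell, \end{cases}
\end{equation*}
where $P_k(\alpha) \defeq \prod_{j=0}^{k-1}(\alpha - j)$ and the two branches agree at $m = \ell$. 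The inductive step follows from the clean identities $\wirt[z^k (z\overline{z})^\beta] = (k + \beta)\,z^{k-1}(z\overline{z})^\beta$ for $k \geq 1$ and $\wirt[(z\overline{z})^\beta] = \beta\,\overline{z}\,(z\overline{z})^{\beta-1}$, which together cover both branches and the transition across $m = \ell$.

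Specialising to $\alpha = -1/2$ gives $P_k(-1/2) = (-1)^k (2k-1)!!/2^k$ (with the convention $(-1)!! = 1$). Substituting into the Leibniz reduction and splitting according to the theorem's case distinction, the two summands can always be brought to a common $z$- (or $\overline{z}$-) power and a common $(z\overline{z})$-power by using $z\overline{z} = \lvert z\rvert^2$; this matching is precisely where the case $m \leq \ell + 1$ versus $m \geq \ell + 1$ enters. The remaining scalar factor is $P_m(-1/2) + m\,P_{m-1}(-1/2)$, which via the recursion $P_m(-1/2) = -\tfrac{2m-1}{2}\,P_{m-1}(-1/2)$ simplifies to $\tfrac{1}{2}\,P_{m-1}(-1/2)$, a nonzero rational. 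In this way one reads off $q_{m,\ell} = \tfrac{1}{2}\,P_\ell(-1/2)\,P_{m-1}(-1/2)$ for $m \geq 1$ (and $q_{0,\ell} = P_\ell(-1/2)$ for $m=0,\ell \geq 1$); a brief consistency check confirms the two pieces of the theorem agree on their common boundary $m = \ell + 1$.

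The main technical obstacle is the careful bookkeeping stemming from the branch-switching behaviour of the identity in the induction: when evaluating $\wirt^m \wirtq^\ell [z\,g]$, the two terms $\wirt^m \wirtq^\ell g$ and $\wirt^{m-1}\wirtq^\ell g$ may sit on opposite sides of the $m = \ell$ boundary, which is precisely what happens at $m = \ell + 1$. One must use the appropriate branch for each and verify that the resulting powers of $z\overline{z}$ and of $z$ or $\overline{z}$ are compatible, so that the Leibniz sum collapses into the clean monomial shape claimed in the theorem.
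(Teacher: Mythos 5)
Your proposal is correct and takes a genuinely different route from the paper's proof. The paper first establishes three one-step Wirtinger-derivative recursions for $z^\ell/\lvert z\rvert^m$ and $\overline{z}^\ell/\lvert z\rvert^m$ (equations (4.1)--(4.3)), and then runs a multi-branch induction, first on $\ell$ with $m=0$, then on $m$, treating $m \le \ell$, $m = \ell+1$, and $m \ge \ell+2$ as separate inductive chains. You instead factor $\sigma(z) = z + b\,z\,g(z)$ with $g = (z\overline{z})^{-1/2}$, observe that the Leibniz rule collapses to $\wirt^m\wirtq^\ell[zg] = z\,\wirt^m\wirtq^\ell g + m\,\wirt^{m-1}\wirtq^\ell g$, and prove a single closed-form identity $\wirt^m\wirtq^\ell(z\overline{z})^\alpha = P_m(\alpha)P_\ell(\alpha)\cdot z^{\ell-m}(z\overline{z})^{\alpha-\ell}$ (resp.\ $\overline{z}^{\,m-\ell}(z\overline{z})^{\alpha-m}$) valid for all $\alpha$, which at $\alpha = -1/2$ specialises to everything you need. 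This buys you two things the paper does not make explicit: an explicit formula $q_{m,\ell} = \tfrac12 P_\ell(-1/2)P_{m-1}(-1/2)$ (resp.\ $q_{0,\ell} = P_\ell(-1/2)$), and a more transparent explanation of the case split, since the $m=\ell+1$ boundary is exactly where the two Leibniz summands straddle the $m=\ell$ transition in the $(z\overline{z})^\alpha$ identity. I verified the key steps: the Leibniz collapse, the two auxiliary identities $\wirt[z^k(z\overline{z})^\beta] = (k+\beta)z^{k-1}(z\overline{z})^\beta$ and $\wirt[(z\overline{z})^\beta] = \beta\overline{z}(z\overline{z})^{\beta-1}$, the simplification $P_m(-1/2) + mP_{m-1}(-1/2) = \tfrac12 P_{m-1}(-1/2)$, and consistency of $q_{m,\ell}$ with the paper's recursions at small $(m,\ell)$; all check out. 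A fully written version would only need to state the generalised Leibniz rule for commuting mixed Wirtinger operators and carry out the induction on $m$ for the $(z\overline{z})^\alpha$ identity in detail.
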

\begin{proof}
   We first calculate certain Wirtinger derivatives for $z \neq 0$. First note
    \begin{align*}
        \wirtq \left(\frac{1}{\vert z \vert^m}\right) &= \frac{1}{2}\left( \partial^{(1,0)}\left(\frac{1}{\vert z \vert^m}\right) + i \cdot \partial^{(0,1)}\left(\frac{1}{\vert z \vert^m}\right)\right) \\
        &= \frac{1}{2} \left(\left(- \frac{m}{2}\right)\frac{2\RE(z) + i\cdot 2\IM(z)}{\vert z \vert ^{m+2}}\right) \\
        &= - \frac{m}{2} \cdot \frac{z}{\vert z \vert^{m+2}}
    \end{align*}
    and similarly
    \begin{equation*}
        \wirt \left(\frac{1}{\vert z \vert^m}\right) = - \frac{m}{2} \cdot \frac{\overline{z}}{\vert z \vert^{m+2}}
    \end{equation*}
    for any $m \in \NN$. Using the product rule for Wirtinger derivatives (see \Cref{sec:notation_reordered}), we see
    \begin{align}
    \label{wirt1}
         \wirtq \left(\frac{z^\ell}{\vert z \vert^m}\right) = \underbrace{\wirtq \left(z^\ell\right)}_{=0} \cdot \frac{1}{\vert z \vert^m} + z^\ell \cdot  \wirtq\left(\frac{1}{\vert z \vert^m}\right) = -\frac{m}{2} \cdot \frac{z^{\ell + 1}}{\vert z \vert^{m+2}}
    \end{align}
    for any $m \in \NN$ and $\ell \in \NN_0$ and furthermore
    \begin{align}
        \wirt \left(\frac{z^\ell}{\vert z \vert^m}\right) &= \wirt \left(z^\ell\right) \cdot \frac{1}{\vert z \vert^m} + z^\ell \cdot  \wirt\left(\frac{1}{\vert z \vert^m}\right) \nonumber\\
        &= \ell \cdot z^{\ell-1} \cdot \frac{1}{\vert z \vert ^m} - z^\ell \cdot  \frac{m}{2} \cdot \frac{\overline{z}}{\vert z \vert^{m+2}} \nonumber\\
        \label{wirt2}
        &=  \left(\ell - \frac{m}{2}\right) \cdot \frac{z^{\ell -1}}{\vert z \vert^m}
    \end{align}
    for $m, \ell \in \NN$, and finally
    \begin{align}
    \label{wirt3}
        \wirt \left(\frac{\overline{z}^\ell}{\vert z \vert^m}\right) &= \underbrace{\wirt \left(\overline{z}^\ell\right)}_{=0} \cdot \frac{1}{\vert z \vert^m} + \overline{z}^\ell \cdot  \wirt\left(\frac{1}{\vert z \vert^m}\right) = - \frac{m}{2} \cdot \frac{\overline{z}^{\ell + 1}}{\vert z \vert^{m+2}}
    \end{align}
    for $m \in \NN$ and $\ell \in \NN_0$. 

    Having proven those three identities, we can start with the actual computation. We first fix $m = 0$ and prove the claimed identity by induction over $\ell$. The case $\ell = 0$ follows directly from the definition of the modReLU and for $\ell = 1$, we note
    \begin{align*}
        \wirtq \sigma (z) = \underbrace{\wirtq (z)}_{=0} + b \cdot \wirtq\left(\frac{z}{\vert z \vert}\right)  \overset{\eqref{wirt1}}{=} b \cdot \left(- \frac{1}{2}\right) \frac{z^2}{\vert z \vert^3},
    \end{align*}
    which is the claimed form. Then, using induction, we compute
    \begin{align*}
       \wirtq^{\ell + 1}\sigma(z) =  \wirtq \left(b \cdot q_{0,\ell} \cdot \frac{z^{\ell+1}}{\vert z \vert^{2\ell+1}}\right) \overset{\eqref{wirt1}}{=} b \cdot \underbrace{q_{0,\ell} \cdot \left(- \frac{2\ell+1}{2}\right)}_{=: q_{0, \ell+1}} \cdot \frac{z^{\ell+2}}{\vert z \vert^{2\ell+3}},
    \end{align*}
    so that the case $m = 0$ is complete. 

    Now we deal with the case $m \leq \ell+1$. The case $\ell = 0$ and $m=1$ is proven by computing
    \begin{align*}
        \wirt \sigma (z) = \wirt (z) + b \cdot \wirt \left(\frac{z}{\vert z \vert}\right) \overset{\eqref{wirt2}}{=} 1 + b \cdot \frac{1}{2} \cdot \frac{1}{\vert z \vert}
    \end{align*}
    so we can assume $\ell > 0$. Since we already dealt with the case $m = 0$, we can inductively assume the claim to be true for a fixed $m \leq \ell$. Then we compute
    \begin{align*}
        \left( \wirt^{m+1} \wirtq^\ell \sigma\right)(z) &= \wirt \left( b \cdot q_{m,\ell} \cdot \frac{z^{\ell-m+1}}{\vert z \vert^{2\ell+1}}\right) \overset{\eqref{wirt2}}{=} b \cdot \underbrace{q_{m,\ell} \cdot \left(-m + \frac{1}{2}\right)}_{=: q_{m+1,\ell}} \cdot \frac{z^{\ell-m}}{\vert z \vert^{2\ell+1}},
    \end{align*}
    which is of the desired form. Note that (\ref{wirt2}) is indeed applicable because $ \ell - m +1 \geq 1$. 

    Finally, we consider the case where $m \geq \ell+1$ and $m \geq 2$. The case $m = \ell+1$ has already been shown. Using induction, we see
    \begin{align*}
        \left( \wirt^{m+1} \wirtq^\ell\sigma\right)(z) &= \wirt \left( \delta_{(m, \ell) = (1,0)} + b \cdot q_{m,\ell} \cdot \frac{\overline{z}^{m-\ell-1}}{\vert z \vert ^{2m-1}}\right) \\ \overset{\eqref{wirt3}}&{=} b \cdot \underbrace{q_{m,\ell} \cdot \left(- m + \frac{1}{2}\right)}_{=: q_{m + 1,\ell}}\cdot \frac{\overline{z}^{m - \ell}}{\vert z \vert ^{2m + 1}}, 
    \end{align*}
    so the proof is complete.
\end{proof}
From \Cref{mod_relu_dev} we can now deduce that the modReLU is smooth and non-polyharmonic on some non-empty open subset of $\CC$.
\begin{corollary}
Let $b \in (-\infty, 0)$ and $z \in \CC$ with $\vert z \vert >  -b $. Then we have 
\begin{equation*}
    \wirt^m \wirtq^\ell \modrelu (z) \neq 0
\end{equation*}
for all $m, \ell \in \NN_0$. In particular, $\modrelu$ is smooth and non-polyharmonic on the set 
\begin{equation*}
\{z \in \CC : \ \vert z \vert > -b\}.
\end{equation*}
Furthermore, $\modrelu$ is continuous on the entire complex plane.
\end{corollary}
\begin{proof}
The first part follows from \Cref{mod_relu_dev} by noting that if $\vert z \vert > -b$ we have in particular $\vert z \vert > -b/2$ and $\vert z \vert > 0$. For the second part, we first note that $\modrelu$ is trivially continuous in $z \in \CC$ if $\vert z \vert \neq -b$. Hence, we assume $\vert z \vert = -b$. Take any sequence $(z_j)_{j\in \NN}$ with $z_j \to z$ as $j\to\infty$ and note
\begin{equation*}
\vert \modrelu(z_j) - \modrelu(z)\vert = \vert \modrelu(z_j) \vert \to \begin{cases} 0 \to 0,&\text{ if } \vert z_j \vert < b, \\ \vert z_j \vert + b \to \vert z \vert + b = 0,& \text{ if }\vert z_j \vert \geq b.\end{cases}
\end{equation*}
This shows the continuity of $\modrelu$.
\end{proof}

Next we analyze the complex cardioid, which was defined in \Cref{sec:prelim}.

\begin{theorem} \label{thm: cardioid}
    For any $z \in \CC$ with $z \neq 0$ and any $m, \ell \in \NN_0$ we have
    \begin{equation*}
        \wirt^m \wirtq^\ell \card (z) = \begin{cases}
        \frac{1}{2}z +\frac{1}{4} \frac{z^2}{\vert z \vert} + \frac{\vert z \vert}{4},& m=\ell=0,\\
            a_{m,\ell} \frac{z^{\ell - m}}{\vert z \vert ^{2\ell -1}} + b_{m, \ell} \frac{z^{\ell + 2 -m}}{\vert z \vert ^{2\ell + 1}}, & m \leq \ell \neq 0, \\
            \frac{1}{2} + \frac{1}{8} \cdot \frac{\overline{z}}{\vert z \vert} + \frac{3}{8} \cdot \frac{z}{\vert z \vert},& m = \ell + 1 = 1,\\
            a_{m,\ell} \frac{\overline{z}}{\vert z \vert ^{2\ell +1}} + b_{m, \ell} \frac{z}{\vert z \vert ^{2\ell + 1}}, & m = \ell + 1 > 1,\\
            a_{m,\ell} \frac{\overline{z}^{m - \ell}}{\vert z \vert ^{2m -1}} + b_{m, \ell} \frac{\overline{z}^{m - \ell - 2 }}{\vert z \vert ^{2m-3}}, & m \geq \ell + 2.
        \end{cases} 
    \end{equation*}
    Here, the numbers $a_{m, \ell}$ and $b_{m,\ell}$ are non-zero and rational. Furthermore, note that all cases for possible choices of $m$ and $\ell$ are covered: The case $m \leq \ell$ is split into $\ell = 0$ and $ \ell \neq 0$. The case $m = \ell + 1$ is split into $m = 1$ and $m > 1$. Then, the case $m \geq \ell +2$ remains. 
\end{theorem}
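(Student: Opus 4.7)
The plan is to mimic the inductive strategy used for $\modrelu$ in \Cref{mod_relu_dev}, leveraging the three Wirtinger identities \eqref{wirt1}--\eqref{wirt3} proved there, together with their conjugate analogue $\wirtq(\overline{z}^\ell/|z|^m) = (\ell - m/2)\,\overline{z}^{\ell-1}/|z|^m$ (valid for $m,\ell \in \NN$) and the $\ell = 0$ instance $\wirt(1/|z|^m) = -\frac{m}{2}\,\overline{z}/|z|^{m+2}$. The starting observation is that for $z \neq 0$ one has $\cos(\sphericalangle z) = \RE(z)/|z| = (z + \overline{z})/(2|z|)$, so
\begin{equation*}
    \card(z) \;=\; \frac{1}{2}z + \frac{1}{2}\cdot\frac{\RE(z)}{|z|}\cdot z \;=\; \frac{1}{2}z + \frac{1}{4}\cdot\frac{z^2}{|z|} + \frac{|z|}{4},
\end{equation*}
which simultaneously settles the base case $m=\ell=0$ and supplies the concrete form from which every higher-order Wirtinger derivative will be computed.

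Fixing $m = 0$, I would first induct on $\ell$. A direct application of \eqref{wirt1} to each summand above, combined with $\wirtq(|z|) = z/(2|z|)$, gives $\wirtq\card(z) = \frac{1}{8}\cdot z/|z| - \frac{1}{8}\cdot z^3/|z|^3$, which matches the claimed shape at $(0,1)$. The inductive step is then immediate: \eqref{wirt1} applied to both summands of the hypothesis reproduces the same shape at $(0,\ell+1)$ with coefficients rescaled by the non-zero rational factors $-(2\ell-1)/2$ and $-(2\ell+1)/2$. With the $m = 0$ row in hand, I would induct on $m$ for each fixed $\ell$. While $m \leq \ell - 1$, \eqref{wirt2} applies to both summands of the shape-(b) expression (since the exponents $\ell - m$ and $\ell + 2 - m$ on $z$ are $\geq 1$) and yields the same shape at $(m+1,\ell)$, rescaled by the non-zero rational factors $(1-2m)/2$ and $(3-2m)/2$. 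At $m = \ell$ the first summand degenerates to $a_{\ell,\ell}/|z|^{2\ell-1}$; applying $\wirt(1/|z|^m)$ there produces an $\overline{z}$-term while \eqref{wirt2} still handles the second summand, and together they land exactly in the shape (d) at $(\ell+1,\ell)$. One more $\wirt$, via \eqref{wirt3} on the $\overline{z}$-summand and $\wirt(1/|z|^m)$ on the $z$-summand, passes to shape (e) at $(\ell+2,\ell)$; from then on shape (e) reproduces itself under $\wirt$ by two applications of \eqref{wirt3}. The special row $\ell = 0$ is handled separately: $\wirt\card(z)$ is computed directly from the rewriting above, yielding shape (c), and a further $\wirt$ passes to shape (e) at $(2,0)$, from where the same recursion applies.

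The only real obstacle is the bookkeeping at the three boundary transitions (shape (b) at $m = \ell$ into shape (d), shape (d) into shape (e), and shape (c) into shape (e)): in each case one must verify that the two terms produced by $\wirt$ line up exactly with the two summands prescribed by the successor shape, and that the resulting coefficients are non-zero and rational. Since every multiplicative factor that appears is of the form $(k - 2m)/2$ or $-(2r-1)/2$ for integers $m, r$ and $k \in \{1,3\}$, none of which vanish on the relevant index ranges, non-vanishing and rationality propagate automatically through the induction.
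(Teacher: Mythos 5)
Your proposal is correct and follows essentially the same route as the paper's proof: rewrite $\card(z) = \tfrac{1}{2}z + \tfrac{1}{4}\tfrac{z^2}{|z|} + \tfrac{|z|}{4}$, establish the row $m=0$ by inducting on $\ell$ via \eqref{wirt1}, then for each fixed $\ell$ induct on $m$ using \eqref{wirt2} and \eqref{wirt3}, handling the three boundary transitions (shape (b) into (d), (d) into (e), and (c) into (e)) by direct computation. The only minor slip is the phrase ``$\wirt(1/|z|^m)$ on the $z$-summand'' in the (d)-to-(e) step: the identity actually invoked on $\tfrac{z}{|z|^{2\ell+1}}$ is \eqref{wirt2} (or equivalently the product rule together with $\wirt(1/|z|^m)$), but the resulting term and coefficient $\bigl(1 - \tfrac{2\ell+1}{2}\bigr)$ are exactly as required, so this does not affect the argument.
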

\begin{proof}
    For the following we always assume $z \in \CC$ with $z \neq 0$. Then we can apply the identity $\cos(\sphericalangle z) = \frac{\RE(z)}{\vert z \vert}$, so we can rewrite
    \begin{equation} \label{eq:card_other_form}
        \card(z) = \frac{1}{2}\left(1 + \frac{\RE(z)}{\vert z \vert}\right)z = \frac{1}{2} z + \frac{1}{4}\frac{\left(z+\overline{z} \right)z}{\vert z \vert} = \frac{1}{2}z +\frac{1}{4} \frac{z^2}{\vert z \vert} + \frac{\vert z \vert}{4}.
    \end{equation}
    This establishes the case $m=\ell = 0$. Next, we compute
    \begin{equation}
\label{wirtquerbetrag}
        \wirtq \left(\vert z \vert\right) = \frac{1}{2} \left( \frac{1}{2} \frac{2\RE(z)}{\vert z \vert} + \frac{i}{2}\frac{2\IM(z)}{\vert z \vert}\right) = \frac{1}{2} \frac{z}{\vert z \vert}
    \end{equation}
    and similarly
    \begin{equation}
\label{wirtbetrag}
        \wirt\left( \vert z \vert\right) = \frac{1}{2} \frac{\overline{z}}{\vert z \vert}.
    \end{equation}
    We deduce
    \begin{align*}
        \wirtq \card (z) \overset{\eqref{wirt1},\eqref{wirtquerbetrag}}{=} \underbrace{\frac{1}{4} \cdot \left(- \frac{1}{2} \right)}_{=: b_{0, 1}} \cdot  \frac{z^3}{\vert z \vert^3} + \underbrace{\frac{1}{8}}_{=: a_{0,1}} \cdot \frac{z}{\vert z \vert}.
    \end{align*}
    Using induction, we derive
    \begin{align*}
        \wirtq^{\ell + 1} \card(z) &= a_{0, \ell }  \wirtq \left(\frac{z^{\ell }}{\vert z \vert ^{2\ell -1}}\right) + b_{0, \ell} \wirtq\left(\frac{z^{\ell + 2 }}{\vert z \vert^{2\ell + 1}}\right) \\
        \overset{\eqref{wirt1}}&{=} \underbrace{a_{0, \ell} \cdot \left( - \frac{2\ell - 1}{2}\right)}_{=: a_{0, \ell + 1}} \cdot \frac{z^{\ell + 1}}{\vert z \vert^{2\ell + 1}} + \underbrace{b_{0, \ell} \cdot \left(-\frac{2\ell + 1}{2}\right)}_{=: b_{0, \ell + 1}} \cdot \frac{z^{\ell + 3}}{\vert z \vert^{2\ell + 3}},
    \end{align*}
    so the claim has been shown if $m = 0$. If we now fix any $\ell \in \NN$ and assume that the claim holds true for some $m \in \NN_0$ with $m < \ell$, we get
    \begin{align*}
        \wirt^{m+1}\wirtq^\ell \card(z) &= a_{m, \ell} \wirt \left(\frac{z^{\ell - m}}{\vert z \vert^{2\ell - 1}}\right) + b_{m, \ell} \wirt \left( \frac{z^{\ell + 2 - m}}{\vert z \vert^{2\ell + 1}}\right) \\
        \overset{\eqref{wirt2}}&{=} \underbrace{a_{m,\ell} \cdot \left( \frac{1}{2} - m\right)}_{=: a_{m+1, \ell}} \cdot \frac{z^{\ell - m -1}}{\vert z \vert^{2\ell - 1}} + \underbrace{b_{m, \ell} \cdot \left( \frac{3}{2} - m\right)}_{=: b_{m+1, \ell}}\cdot \frac{z^{\ell + 2 - m - 1}}{\vert z \vert^{2\ell + 1}},
    \end{align*}
    so the claim holds true if $m \leq \ell$. 

    The case $m = \ell + 1$ is split into the case $m= 1$ and $m > 1$. If $m = 1$, then $\ell = 0$ and we compute
    \begin{equation*}
        \wirt \card (z) \overset{\eqref{wirt2}, \eqref{eq:card_other_form},\eqref{wirtbetrag}}{=} \frac{1}{2} + \frac{1}{4}\left(2 - \frac{1}{2}\right) \cdot \frac{z}{\vert z \vert } + \frac{1}{8} \cdot \frac{\overline{z}}{\vert z \vert}.
    \end{equation*}
    If $m > 1$, we get
    \begin{align*}
        \wirt^{\ell + 1}\wirtq^{\ell} \card (z) &= a_{\ell, \ell} \wirt \left(\frac{1}{\vert z \vert^{2\ell - 1}}\right) + b_{\ell, \ell} \cdot \wirt \left(\frac{z^{ 2 }}{\vert z \vert^{2\ell + 1}}\right) \\
        \overset{\eqref{wirt2},\eqref{wirt3}}&{=} \underbrace{a_{\ell, \ell} \cdot \left(-\frac{2\ell - 1}{2} \right)}_{=: a_{\ell + 1, \ell}} \cdot \frac{\overline{z}}{\vert z \vert^{2\ell + 1}} +\underbrace{ b_{\ell, \ell} \cdot \left(2 - \frac{2\ell + 1}{2}\right)}_{=: b_{\ell + 1, \ell}} \cdot \frac{z}{\vert z \vert^{2\ell + 1}}.
    \end{align*}
    Next, we deal with the case $m = \ell + 2$: Here, we see
\begin{align*}
	\wirt^{\ell + 2} \wirtq^{\ell} \card (z) &= \wirt \left(\frac{1}{2} \delta_{\ell = 0}  + a_{\ell +1, \ell} \frac{ \overline{z}}{\vert z \vert^{2\ell + 1}} + b_{\ell + 1,\ell} \frac{z}{\vert z \vert^{2\ell + 1}}\right) \\
\overset{\eqref{wirt2},\eqref{wirt3}}&{=} \underbrace{a_{\ell + 1, \ell} \cdot \left(-\frac{2\ell + 1}{2}\right)}_{=: a_{\ell +2, \ell}} \cdot \frac{\overline{z}^2}{\vert z \vert^{2\ell + 3}} + \underbrace{b_{\ell + 1, \ell}\cdot \left(1 - \frac{2\ell + 1}{2}\right)}_{=: b_{\ell + 2, \ell}}\cdot \frac{1}{\vert z \vert^{2\ell + 1}}.
\end{align*}
If we assume the claim to be true for a fixed $m \geq \ell + 2$, we get 
\begin{align*}
\wirt^{m + 1} \wirtq^\ell \card(z) &= a_{m, \ell} \cdot \wirt \left(\frac{\overline{z}^{m - \ell}}{\vert z \vert^{2m - 1}}\right) + b_{m, \ell} \cdot \wirt\left( \frac{\overline{z}^{m - \ell - 2}}{\vert z \vert^{2m - 3}}\right) \\
\overset{\eqref{wirt3}}&{=} \underbrace{a_{m,\ell} \cdot \left(- \frac{2m - 1}{2}\right)}_{=: a_{m + 1, \ell}} \cdot \frac{\overline{z}^{m + 1 - \ell}}{\vert z \vert^{2m + 1}} + \underbrace{b_{m,\ell} \cdot \left( - \frac{2m - 3}{2}\right)}_{ =: b_{m + 1, \ell}} \cdot \frac{\overline{z}^{m - \ell - 1}}{\vert z \vert^{2m - 1}}.
\end{align*}
Hence, using induction, we have proven the claimed identity.
\end{proof}
The statement regarding the non-polyharmonicity of the complex cardioid is formulated in the following corollary.
\begin{corollary}
    For every $z \in \CC$ with $z \notin \RR \cup i \RR$ and every $m, \ell \in \NN_0$ we have 
    \begin{equation*}
        \wirt^m \wirtq^\ell \card (z)\neq 0.
    \end{equation*}
    In particular, $\card$ is smooth and non-polyharmonic on $\CC \setminus (\RR \cup i \RR)$. Futhermore, $\card$ is continuous on the entire complex plane.
\end{corollary}
\begin{proof}
We start with the first part of the corollary.
     For the following, let $z  \in \CC$ with $z \notin \RR \cup i\RR$, i.e., $\RE(z) \neq 0$ and $\IM(z) \neq 0$.
    Using the definition of $\card$, we see 
    \begin{equation*}
        \card(z) = 0 \quad \Longleftrightarrow \quad z = 0 \quad \text{or} \quad \cos(\sphericalangle z ) = - 1 \quad \Longleftrightarrow \quad z \in \RR_{\leq 0},
    \end{equation*}
and thus, $\card(z) \neq 0$ since $\IM(z) \neq 0$. 
    In the case $m \leq \ell \neq 0$ we use \Cref{thm: cardioid} to get the following chain of implications:
    \begin{align*}
        \wirt^m \wirtq^\ell \card (z) = 0 \Leftrightarrow a_{m, \ell} + b_{m, \ell}\frac{z^2}{\vert z \vert ^2} = 0 \Leftrightarrow z^2 = - \vert z \vert^2 \cdot \frac{a_{m, \ell}}{b_{m, \ell}} \Rightarrow z^2 \in \RR \Leftrightarrow z \in \RR \cup i\RR
    \end{align*}
    holds, and thus $\wirt^m \wirtq^\ell \card (z) \neq 0$ for $z \notin \RR \cup i\RR$.  

    For the case $m = \ell + 1 = 1$, note
    \begin{equation*}
        \wirt \card(z) = 0 \Leftrightarrow \frac{1}{8} \cdot \overline{z} + \frac{3}{8} \cdot z = -\frac{\vert z \vert}{2} \Rightarrow \frac{3}{8}\IM(z) - \frac{1}{8} \IM(z) = 0 \Leftrightarrow \IM(z) = 0,
    \end{equation*}
and thus $\wirt \card(z) \neq 0$ since $z \notin \RR$.

    For $m = \ell + 1 > 1$, we see by considering the real- and imaginary parts that
    \begin{align*}
        \wirt^{\ell + 1}\wirtq^\ell \card (z) = 0 &\Leftrightarrow a_{m,\ell}\overline{z} + b_{m, \ell} z = 0 \\
        \overset{\RE(z), \IM(z) \neq 0}&{\Leftrightarrow} a_{m,\ell} + b_{m, \ell}=0 \text{ and } -a_{m, \ell} + b_{m, \ell}=0 \\ &\Leftrightarrow a_{m,\ell} = b_{m,\ell}=0,
    \end{align*}
and hence $\wirt^{\ell + 1} \wirtq^{\ell} \card(z) \neq 0$, since $a_{m,\ell} \neq 0 \neq b_{m,\ell}$ by \Cref{thm: cardioid}. 

    Therefore, it remains to consider the case $m \geq \ell + 2$. But here we easily see
    \begin{align*}
        \wirt^m \wirtq^{\ell}\card(z) = 0 \quad \Leftrightarrow\quad  a_{m,\ell} \frac{\overline{z}^2}{\vert z \vert^2} + b_{m,\ell} = 0 \quad \Rightarrow \quad \overline{z}^2 \in \RR \quad\Leftrightarrow\quad z \in \RR \cup i\RR,
    \end{align*}
and hence $\wirt^m \wirtq^\ell \card(z) \neq 0$. Since all cases have been considered, the claim follows.

Regarding the second part of the corollary, first note that $\card$ is trivially continuous on $\CC \setminus \{0\}$. Further note that
\begin{equation*}
\vert\card(z)\vert = \left\vert \frac{1}{2}\left(1 + \frac{\RE(z)}{\vert z \vert}\right)z\right\vert \leq \vert z \vert \to 0
\end{equation*}
as $z\to 0$, showing the continuity of $\card$ on the entire complex plane $\CC$.
\end{proof}
\section{Postponed proofs concerning the approximation of polynomials}

\subsection{Divided Differences} \label{sec:div_diff_reordered}

Divided differences are well-known in numerical mathematics as they are for example used to calculate the coefficients of an interpolation polynomial in its Newton representation. In our case, we are interested in divided differences since they can be used to obtain a generalization of the classical mean-value theorem for differentiable functions: Given an interval $I \subseteq \RR$ and $n+1$ pairwise distinct data points $x_0, ..., x_n \in I$ as well as an $n$-times differentiable real-valued function $f : I \to \RR$, there exists $\xi \in \left( \text{min}\left\{x_0 ,..., x_n\right\}, \text{max}\left\{x_0 ,..., x_n\right\}\right)$ such that
\begin{equation*}
    f\left[x_0, ..., x_n\right] = \frac{f^{(n)}(\xi)}{n!},
\end{equation*}
where the left-hand side is a divided difference of $f$ (defined below). The classical mean-value theorem is obtained in the case $n=1$. Our goal in this section is to generalize this result to a multivariate setting by considering divided differences in multiple variables. Such a generalization is probably well-known, but since we could not locate a convenient reference and to make the paper more self-contained, we provide a proof.

Let us first define divided differences formally. Given $n+1$ data points $\left(x_0, y_0\right), ..., \left(x_n, y_n\right) \in \RR \times \RR$ with pairwise distinct $x_k$, we define the associated divided differences recursively via
\begin{align*}
    \left[ y_k\right] &\defeq y_k, \ k \in \{0,...,n\}, \\
    \left[y_k ,..., y_{k+j}\right] &\defeq \frac{\left[y_{k+1},..., y_{k+j}\right] - \left[y_k,..., y_{k+j-1}\right]}{x_{k+j}-x_k}, \ j \in \{1,...,n\}, \ k \in \{0, ..., n-j\}.
\end{align*}
If the data points are defined by a function $f$ (i.e. $y_k = f\left(x_k\right)$ for all $k \in \{0,...,n\}$), we write
\begin{equation*}
    \left[x_k,...,x_{k+j}\right] f \defeq \left[y_k, ..., y_{k+j}\right].
\end{equation*}
We first consider an alternative representation of divided differences, the so-called \emph{Hermite-Genocchi-Formula}. To state it, we introduce the notation $\Sigma^k$ to denote a certain $k$-dimensional simplex.
\begin{definition}\label{def:simplex}
    Let $s \in \NN$. Then we define
    \begin{equation*}
        \Sigma^s \defeq \left\{ x \in \RR^s : \ x_\ell \geq 0 \ \mathrm{ for} \ \mathrm{all}  \ \ell \ \mathrm{ and  } \sum_{\ell = 1}^s x_\ell \leq 1 \right\}.
    \end{equation*}
    We further set $\Sigma^0 \defeq \{0\} \subseteq \RR$. Denoting by $\lambda^s$ the $s$-dimensional Lebesgue-measure (and by $\lambda^0$ the counting measure), the identity $\lambda^s\left(\Sigma^s\right)= \frac{1}{s!}$ holds true; see for instance \cite{stein_note_1966} and note that the case $s=0$ can be seen directly.
\end{definition}
In the following, we consider integrals over the sets $\Sigma^s$. These integrals are always formed with respect to the measure $\lambda^s$. However, we only write, e.g., $\int_{\Sigma^s} f(x) dx$, with the implicit understanding that the integral is formed with respect to the measure $\lambda^s$. Using this convention, we can now consider the alternative representation of divided differences.
\begin{lemma}[Hermite-Genocchi-Formula]
    Let $k \in \NN_0$. For real numbers $a,b \in \RR$ with $a < b$, a function $f \in C^k([a,b]; \RR)$ (where $C^0([a,b]; \RR)$ denotes the set of continuous functions) and pairwise distinct $x_0, ..., x_k \in [a,b]$, the divided difference of $f$ at the data points $x_0, ..., x_k$ satisfies the identity
\begin{equation}
\label{hg}
    \left[x_0, ..., x_k\right]f = \int_{\Sigma^k} f^{(k)}\left(x_0 + \sum_{\ell=1}^{k}s_\ell\left(x_\ell-x_0\right)\right) ds.
\end{equation}
\end{lemma}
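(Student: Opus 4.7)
The plan is to prove the formula by induction on the order $k \in \NN$. For the base case $k = 1$, a direct application of the fundamental theorem of calculus yields
\begin{equation*}
\int_{\Sigma^1} f'(x_0 + s_1(x_1 - x_0))\, ds_1 = \int_0^1 f'(x_0 + s(x_1-x_0))\, ds = \frac{f(x_1) - f(x_0)}{x_1 - x_0} = [x_0, x_1] f.
\end{equation*}

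For the inductive step, I would assume the identity holds for divided differences of order $k$, take $f \in C^{k+1}([a,b]; \RR)$ together with pairwise distinct $x_0, \ldots, x_{k+1} \in [a,b]$, and rewrite the integral over $\Sigma^{k+1}$ as an iterated integral by Fubini, integrating out $s_{k+1}$ first over $[0, 1 - s_1 - \cdots - s_k]$ for each fixed $(s_1, \ldots, s_k) \in \Sigma^k$. Because the map $s_{k+1} \mapsto (x_{k+1}-x_0)^{-1} f^{(k)}\bigl(x_0 + \sum_{\ell=1}^k s_\ell(x_\ell - x_0) + s_{k+1}(x_{k+1} - x_0)\bigr)$ is an antiderivative of the integrand, the fundamental theorem of calculus evaluates the inner integral, after simplification of the upper limit, as
\begin{equation*}
\frac{1}{x_{k+1}-x_0}\Bigl[f^{(k)}\Bigl(x_{k+1} + \sum_{\ell=1}^k s_\ell(x_\ell - x_{k+1})\Bigr) - f^{(k)}\Bigl(x_0 + \sum_{\ell=1}^k s_\ell(x_\ell - x_0)\Bigr)\Bigr].
\end{equation*}
Integrating this over $\Sigma^k$, the subtracted term equals $[x_0, \ldots, x_k] f$ directly by the inductive hypothesis, and the first term is the Hermite-Genocchi integral for the ordered tuple $(x_{k+1}, x_1, \ldots, x_k)$, hence equals $[x_{k+1}, x_1, \ldots, x_k] f$. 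Combined with the defining recursion $[x_0, \ldots, x_{k+1}] f = \bigl([x_1, \ldots, x_{k+1}] f - [x_0, \ldots, x_k] f\bigr)/(x_{k+1} - x_0)$ and the identity $[x_{k+1}, x_1, \ldots, x_k] f = [x_1, \ldots, x_{k+1}] f$, the induction closes.

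The one non-routine ingredient is the permutation invariance of divided differences needed to identify $[x_{k+1}, x_1, \ldots, x_k] f$ with $[x_1, \ldots, x_{k+1}] f$. I plan to handle this as a preliminary lemma by establishing, by induction on $k$ from the recursive definition alone, the Lagrange-type closed form
\begin{equation*}
[x_0, \ldots, x_k] f = \sum_{j=0}^{k} \frac{f(x_j)}{\prod_{i \neq j} (x_j - x_i)},
\end{equation*}
from which the symmetry under arbitrary permutations of the arguments is manifest. As an \emph{a posteriori} sanity check, one can also note that the substitution $s_0 \defeq 1 - s_1 - \cdots - s_k$ rewrites the Hermite-Genocchi integrand as $f^{(k)}\bigl(\sum_{\ell=0}^k s_\ell x_\ell\bigr)$, which depends symmetrically on $(x_0, \ldots, x_k)$; once the formula is in hand this recovers the permutation invariance independently.
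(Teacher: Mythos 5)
The paper does not prove this lemma itself; it simply cites \cite[Theorem 3.3]{atkinson_introduction_1989}. Your inductive argument is correct and is the standard proof of the Hermite--Genocchi formula: the base case via the fundamental theorem of calculus, the inductive step by integrating out $s_{k+1}$ and recognizing the two resulting simplex integrals as lower-order divided differences, and the defining recursion closing the induction. You also correctly identify the one point where the recursive definition alone does not suffice — the permutation invariance needed to rewrite $[x_{k+1}, x_1, \ldots, x_k]f$ as $[x_1, \ldots, x_{k+1}]f$ — and your plan to derive it from the Lagrange closed form $[x_0,\ldots,x_k]f = \sum_{j=0}^k f(x_j)/\prod_{i\neq j}(x_j - x_i)$, itself provable by induction from the recursion, is sound. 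The a posteriori sanity check via the substitution $s_0 = 1 - s_1 - \cdots - s_k$ is a nice confirmation but not logically necessary. In short: your proof is complete and correct; the paper outsources this step to a reference.
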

\begin{proof}
    The case $k=0$ follows directly from $[x_0]f = f(x_0)$. For the case $k>0$, see \cite[Theorem 3.3]{atkinson_introduction_1989}.
\end{proof}
We will make use of the above formula by combining it with the following generalization of the mean-value theorem for integrals.
\begin{lemma}\label{lem:mean_val}
Let $D$ be a connected and compact topological space. Let $\mathcal{A}$ be some $\sigma$-algebra over $D$ and let $\mu: \mathcal{A} \to [0, \infty)$ be a finite measure on $D$ with $\mu(D) > 0$. Let $f : D \to \RR$ be a continuous function with respect to the standard topology on $\RR$ and the topology on $D$. Moreover, let $f$ be measurable with respect to $\mathcal{A}$ and the Borel $\sigma$-algebra on $\RR$. Then there exists $\xi \in D$ such that
\begin{equation*}
    f(\xi) = \frac{1}{\mu(D)} \cdot \int_D f(x) d\mu(x).
\end{equation*}
\end{lemma}
\begin{proof} 
Since $D$ is compact and connected and $f$ continuous, it follows that $f(D) \subset \RR$ is compact and connected, and hence $f(D)$ is a compact interval in $\RR$. Therefore, there exist $x_{\text{min}} \in D$ and $x_{\text{max}} \in D$ satisfying
\begin{equation*}
    f\left(x_{\text{min}}\right) \leq f(x) \leq f\left(x_{\text{max}} \right)
\end{equation*}
for all $x \in D$. Thus, one gets
\begin{align*}
    f\left( x_{\text{min}}\right) &=  \frac{1}{\mu(D)} \int_D f(x_{\min}) d\mu(x)  \leq \frac{1}{\mu(D)} \int_D f(x) d\mu(x) \\
    &\leq \frac{1}{\mu(D)} \int_D f(x_{\max}) d\mu(x) =  f\left( x_{\text{max}}\right).
\end{align*}
The claim now follows from the fact that $f(D)$ is an interval.
\end{proof}
We also get a convenient representation of divided differences for the case of equidistant data points.
\begin{lemma}
Let $f: \RR \to \RR$, $x_0 \in \RR$, $k \in \NN_0$ and $h > 0$. We consider the case of equidistant data points, meaning $x_{j} \defeq x_0 + jh$ for all $j = 1,...,k$. In this case, we have the formula
\begin{equation}
\label{alternativdarstellung}
    \left[x_0, ..., x_k\right]f = \frac{1}{k!h^k} \cdot \sum_{r=0}^k (-1)^{k-r}\binom{k}{r} f\left(x_r\right). 
\end{equation}
\end{lemma}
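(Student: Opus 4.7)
The plan is to prove the identity by induction on $k$. The base case $k = 0$ is immediate since $[x_0]f = f(x_0)$ matches the right-hand side $\frac{1}{0!\,h^0}(-1)^0\binom{0}{0}f(x_0)$; alternatively one can easily verify the case $k=1$ directly from the definition of the first-order divided difference.

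For the inductive step, assume the formula holds for any $k-1$ equidistant nodes. Applying the recursive definition of divided differences together with $x_k - x_0 = kh$, I would write
\begin{equation*}
[x_0,\ldots,x_k]f \;=\; \frac{[x_1,\ldots,x_k]f - [x_0,\ldots,x_{k-1}]f}{kh}.
\end{equation*}
Applying the induction hypothesis separately to $[x_1,\ldots,x_k]f$ (using the shift $s = r+1$ to reindex relative to $x_1$ as the anchor) and to $[x_0,\ldots,x_{k-1}]f$ yields two sums whose general term is a signed binomial coefficient times $f(x_r)$. The coefficient of $f(x_r)$ in the resulting combined sum, for $1 \le r \le k-1$, becomes $(-1)^{k-r}\bigl[\binom{k-1}{r-1} + \binom{k-1}{r}\bigr]$, which collapses via Pascal's identity to $(-1)^{k-r}\binom{k}{r}$. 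Direct inspection then shows the boundary terms at $r=0$ and $r=k$ contribute exactly $(-1)^k\binom{k}{0}f(x_0)$ and $\binom{k}{k}f(x_k)$, respectively. Dividing by $kh$ and pulling the factor $\frac{1}{(k-1)!}$ out to form $\frac{1}{k!\,h^k}$ completes the step.

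There is no real conceptual obstacle here; the entire content of the proof is the classical Pascal identity combined with careful reindexing of the two telescoping sums. The only mild nuisance is keeping track of the sign pattern, making sure that after the subtraction the shifted factor $(-1)^{k-1-r}$ from the second sum aligns as $(-1)^{k-r}$ after the minus sign is absorbed, and that the two endpoint indices $r=0$ and $r=k$ produce precisely the correct $\binom{k}{0}$ and $\binom{k}{k}$ terms once the interior indices have been combined.
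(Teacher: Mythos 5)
Your proposal is correct and follows essentially the same approach as the paper: induction via the recursive definition of divided differences, reindexing the two resulting sums, and collapsing the interior coefficients with Pascal's identity. The only difference is cosmetic — the paper formalizes the induction hypothesis over all shifted windows $[x_\ell,\ldots,x_{\ell+j}]f$ to avoid having to reanchor, whereas you obtain the same thing by reindexing against $x_1$; both are valid.
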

\begin{proof}
We prove the result via induction over the number $j$ of considered data points, meaning the following: For all $j \in \{0,...,k\}$ we have
\begin{equation*}
    \left[x_\ell, ..., x_{\ell+j}\right]f = \frac{1}{j!h^j} \cdot \sum_{r=0}^j (-1)^{j-r}\binom{j}{r} f\left(x_{\ell+r}\right)
\end{equation*}
for all $\ell \in \{0, ..., k\}$ satisfying $\ell + j \leq k$. The case $j = 0$ is trivial. Therefore, we assume the claim to be true for a fixed $j \in \{0,...,k-1\}$, and let $\ell \in \{0,...,k\}$ be arbitrary with $\ell+j+1 \leq k$. We then get
\begin{align*}
    \left[x_\ell, ..., x_{\ell+j+1}\right]f &= \frac{\left[x_{\ell+1}, ..., x_{\ell+j+1}\right]f - \left[x_\ell, ..., x_{\ell+j}\right]f}{x_{\ell+j+1}-x_\ell} \\
    \overset{\text{I.H.}}&{=} \frac{1}{j!h^j}\cdot \frac{\sum_{r=0}^j (-1)^{j-r}\binom{j}{r} \left(f\left(x_{\ell+r+1}\right) - f\left(x_{\ell+r}\right)\right)}{(j+1)h} \\
    &= \frac{1}{(j+1)!h^{j+1}}\sum_{r=0}^j (-1)^{j-r}\binom{j}{r} \left(f\left(x_{\ell+r+1}\right) - f\left(x_{\ell+r}\right)\right).
\end{align*}
Using an index shift, we deduce
\begin{align*}
    & \norel \sum_{r=0}^j (-1)^{j-r}\binom{j}{r}f\left(x_{\ell+r+1}\right) - \sum_{r=0}^j (-1)^{j-r}\binom{j}{r}f\left(x_{\ell+r}\right) \\
    &= \sum_{r=1}^{j+1} (-1)^{j+1-r}\binom{j}{r-1}f\left(x_{\ell+r}\right) + \sum_{r=0}^j (-1)^{j+1-r}\binom{j}{r}f\left(x_{\ell+r}\right) \\
    &= (-1)^{j+1} f\left(x_\ell\right) + \sum_{r=1}^{j} \left((-1)^{j+1-r}f\left(x_{\ell+r}\right) \left[\binom{j}{r-1} + \binom{j}{r}\right]\right) + f\left(x_{\ell+j+1}\right) \\
    &= \sum_{r=0}^{j+1} (-1)^{j+1-r}\binom{j+1}{r} f\left(x_{\ell+r}\right),
\end{align*}
which yields the claim.
\end{proof}
The final result for divided differences, which is the result that is actually used in the proof of \Cref{main_1} in \Cref{approx_polynomials_reordered}, reads as follows:
\begin{theorem}
\label{div_differences_mainresult}
Let $f: \RR^s \to \RR$ and $k \in \NN_0, r>0$, such that $\fres{f}{(-r,r)^s} \in C^k \left((-r,r)^s; \RR\right)$. For $\textbf{p} \in \NN_0^s$ with $\vert \pp \vert \leq k$ and $h>0$ let
\begin{equation*}
    f_{\pp,h} \defeq (2h)^{-\vert \pp \vert} \sum_{0 \leq \textbf{r} \leq \textbf{p}} (-1)^{\vert \pp \vert -\vert \rr \vert} \binom{\textbf{p}}{\textbf{r}} f \left( h(2\rr-\pp)\right).
\end{equation*}
Let $m \defeq \underset{j}{\mathrm{max}} \  \pp_j$. Then, for $0 <h < \frac{r}{\max\{1,m\}}$ there exists $\xi \in h[-m,m]^s$ satisfying
\begin{equation*}
    f_{\pp,h} = \partial^\pp f(\xi).
\end{equation*}
\end{theorem}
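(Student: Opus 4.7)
The plan is to reduce the claim to the univariate case and then iterate over coordinates. The starting observation is that the operator $f \mapsto f_{\pp,h}$ factors as a product of univariate divided-difference operators, one per coordinate: if $D_j$ denotes the one-dimensional difference operator
\begin{equation*}
    (D_j g)(x) \defeq (2h)^{-\pp_j} \sum_{r=0}^{\pp_j} (-1)^{\pp_j-r}\binom{\pp_j}{r} g\bigl(x + h(2r - \pp_j)\,e_j\bigr),
\end{equation*}
then expanding the multiple sum in the definition of $f_{\pp,h}$ shows $f_{\pp,h} = (D_1 \cdots D_s f)(0)$. Because $f \in C^k\bigl((-r,r)^s;\RR\bigr)$ with $\vert\pp\vert\leq k$, all mixed partials of order at most $k$ are continuous on the compact set $h\prod_j[-\pp_j,\pp_j]\subseteq(-r,r)^s$ (this is where the assumption $h<r/\max\{1,m\}$ is used), the operators $D_j$ commute, and one may freely interchange differentiation with integration in what follows.

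Next I would dispatch the univariate case. For $p\in\NN_0$, $h>0$ with $hp<r$ and $g\in C^p\bigl((-r,r);\RR\bigr)$, formula \eqref{alternativdarstellung} applied with step $2h$ and data points $x_r=h(2r-p)$ identifies
\begin{equation*}
    g_{p,h} = p!\cdot[x_0,\ldots,x_p]\,g.
\end{equation*}
Invoking Hermite--Genocchi \eqref{hg} rewrites this as $p!\int_{\Sigma^p} g^{(p)}(\tau(s))\,ds$, where $\tau(s)=-hp+2h\sum_{\ell=1}^p\ell\,s_\ell$ lies in $[-hp,hp]$ since $s_\ell\geq 0$ and $\sum_\ell s_\ell\leq 1$. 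As $\Sigma^p$ is connected and compact with $\lambda^p(\Sigma^p)=1/p!$ and $g^{(p)}$ is continuous, the mean-value theorem for integrals produces some $\xi\in h[-p,p]$ with $g_{p,h}=g^{(p)}(\xi)$.

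Third, I would iterate. Applying the Hermite--Genocchi representation to each factor $D_j$ successively and pulling the derivative $\partial^\pp$ outside the nested integrals (justified by continuity of the relevant mixed partials on the compact cuboid above, via dominated convergence) yields
\begin{equation*}
    f_{\pp,h} = \Bigl(\prod_{j=1}^s \pp_j!\Bigr)\int_{\Sigma^{\pp_1}\times\cdots\times\Sigma^{\pp_s}} (\partial^\pp f)\bigl(T(s^{(1)},\ldots,s^{(s)})\bigr)\,ds^{(1)}\cdots ds^{(s)},
\end{equation*}
where the $j$-th component of $T$ is $T_j(s^{(j)})=-h\pp_j+2h\sum_{\ell=1}^{\pp_j}\ell\,s_\ell^{(j)}\in h[-\pp_j,\pp_j]\subseteq h[-m,m]$. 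The product of simplices is connected and compact with Lebesgue measure $\prod_j 1/\pp_j!$, exactly cancelling the prefactor. One final application of the mean-value theorem for integrals to the continuous integrand furnishes a point in the product simplex whose image under $T$ is the desired $\xi\in h[-m,m]^s$ with $f_{\pp,h}=\partial^\pp f(\xi)$.

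The only real obstacle is bookkeeping: verifying the factorization $f_{\pp,h}=(D_1\cdots D_s f)(0)$ cleanly, confirming that the iterated Hermite--Genocchi integrations combine into a single integral over the product simplex, and absorbing the degenerate coordinates $\pp_j=0$ by the convention $\Sigma^0=\{0\}$ (so that $D_j$ reduces to the identity and the $0$-th factor contributes nothing). Beyond this, the argument is analytically routine: the univariate Hermite--Genocchi identity combined with the multivariate mean-value theorem for integrals already established in the appendix does all the work.
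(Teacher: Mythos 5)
Your proposal is correct and takes essentially the same route as the paper's proof: both factor the multivariate divided difference into univariate pieces, invoke Hermite--Genocchi coordinatewise to obtain an integral of $\partial^\pp f$ over a product of simplices whose total measure exactly cancels the prefactor $\pp!$, and conclude with the mean value theorem for integrals. The only difference is presentational — you iterate the univariate result explicitly, while the paper packages the same iteration as a formal induction over $s$ via the auxiliary function $\Gamma(y) = (f_y)_{\pp',h}$.
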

\begin{proof}
We may assume $m > 0$, since $m=0$ implies $\pp=0$ and thus $f_{\pp,h} = f(0)$, so that the claim holds for $\xi = 0$.

We prove via induction over $s \in \NN$ that the formula
\begin{equation}
\label{to_prove}
    f_{\pp,h}= \pp! \int_{\Sigma^{\pp_s}}\int_{\Sigma^{\pp_{s-1}}} \cdot\cdot\cdot \int_{\Sigma^{\pp_1}} \partial^\pp f \left( -h\pp_1 + 2h\sum_{\ell=1}^{\pp_1}\ell\sigma_\ell^{(1)}, ..., -h\pp_s + 2h\sum_{\ell=1}^{\pp_s}\ell\sigma_\ell^{(s)}\right)d\sigma^{(1)} \cdot \cdot \cdot d\sigma^{(s)}
\end{equation}
holds for all $\pp \in \NN_0^s$ with $1 \leq \vert \pp \vert \leq k$ and all $0 < h < \frac{r}{m}$. The case $s=1$ is exactly the Hermite-Genocchi-Formula (\ref{hg}), combined with (\ref{alternativdarstellung}) applied to the data points \[-hp, -hp + 2h, ..., hp-2h, hp.\] 

By induction, assume that the claim holds for some $s \in \NN$.
For a fixed point $y \in (-r,r)$, let 
\begin{equation*}
    f_y: \quad (-r,r)^s \to \RR, \quad x \mapsto f(x,y).
\end{equation*}
For $\pp \in \NN_0^{s+1}$ with $\vert \pp \vert \leq k$ and $\pp' := \left(p_1,...,p_s\right)$, we define
\begin{equation*}
    \Gamma: \quad (-r,r) \to \RR, \quad y \mapsto \left( f_y\right)_{\pp',h} = (2h)^{- \vert \pp' \vert} \sum_{0 \leq \rr' \leq \pp'} (-1)^{\vert  \pp' \vert - \vert \rr' \vert} \binom{\pp'}{\rr'} f\left( h(2\rr' - \pp'),y\right).
\end{equation*}
Using the induction hypothesis, we get
\begin{align*}
\label{IV}
    &\Gamma(y) \\
    =&\pp'! \int\limits_{\Sigma^{\pp_s}}\int\limits_{\Sigma^{\pp_{s-1}}} \cdot\cdot\cdot \int\limits_{\Sigma^{\pp_1}} \partial^{\left(\pp',0 \right)} f \left( -h\pp_1 + 2h\sum_{\ell=1}^{\pp_1}\ell\sigma_\ell^{(1)}, ..., -h\pp_s + 2h\sum_{\ell=1}^{\pp_s}\ell\sigma_\ell^{(s)},y\right)d\sigma^{(1)} \cdot \cdot \cdot d\sigma^{(s)}
\end{align*}
for all $y \in (-r,r)$.
Furthermore, we calculate
\begin{align*}
    &\norel \pp_{s+1}! \cdot [-h \cdot \pp_{s+1}, -h \cdot \pp_{s+1} + 2h, ..., h \cdot \pp_{s+1}]\Gamma  \\
    \overset{\eqref{alternativdarstellung}}&{=} (2h) ^{- \pp_{s+1}} \sum_{r' = 0}^{\pp_{s+1}} (-1)^{\pp_{s+1}-r'}\binom{\pp_{s+1}}{r'} \Gamma\left(h\left(2r'-\pp_{s+1}\right)\right) \\
    &= (2h) ^{- \pp_{s+1}} \Bigg[\sum_{r' = 0 }^{\pp_{s+1}} (-1)^{\pp_{s+1}-r'}\binom{\pp_{s+1}}{r'} (2h)^{- \vert \pp' \vert}\\
    &\hspace{1.8cm}\norel \times \sum_{0 \leq \rr' \leq \pp'}(-1)^{\vert \pp' \vert -\vert \rr' \vert} \binom{\pp'}{\rr'} f\left( h(2\rr' - \pp'), h(2r' - \pp_{s+1})\right)\Bigg] \\
    &= (2h)^{-\vert \pp \vert} \sum_{0 \leq \textbf{r} \leq \textbf{p}} (-1)^{\vert \pp \vert -\vert \rr \vert} \binom{\textbf{p}}{\textbf{r}} f \left( h(2\rr-\pp)\right) \\
    &= f_{\pp, h}.
\end{align*}
On the other hand, we get
\begin{align*}
    &\norel [-h \cdot \pp_{s+1}, -h \cdot \pp_{s+1} + 2h, ..., h \cdot \pp_{s+1}]\Gamma \\
    \overset{\eqref{hg}}&{=} \int_{\Sigma^{\pp_{s+1}}}\Gamma^{(\pp_{s+1})}\left(-h\pp_{s+1} + 2h\sum_{\ell=1}^{\pp_{s+1}}\ell \sigma^{(s+1)}_\ell\right)d\sigma^{(s+1)} \\
    \overset{}&{=} \pp'! \!
                   \int\limits_{\Sigma^{\pp_{s+1}}}
                    \cdots
                    \int\limits_{\Sigma^{\pp_1}}
                      \partial^{\pp} f \left( -h\pp_1 + 2h\sum_{\ell=1}^{\pp_1}\ell\sigma_\ell^{(1)}, ..., -h\pp_{s+1}+2h\sum_{\ell=1}^{\pp_{s+1}}\ell\sigma^{(s+1)}_\ell\right)
                    d\sigma^{(1)} \cdots d\sigma^{(s+1)}.
\end{align*}
Interchanging the order of integration and derivative is possible, since we integrate on compact sets and only consider continuously differentiable functions (see, e.g., \cite[Lemma 16.2]{bauer_measure_2001}).

We have thus proven (\ref{to_prove}) using the principle of induction. The claim of the theorem then follows directly using the mean-value theorem for integrals (\Cref{lem:mean_val}) applied to the topological space
\begin{equation*}
D \defeq \Sigma^{\pp_1} \times \cdots \times \Sigma^{\pp_{s+1}}
\end{equation*}
equipped with the product topology where each factor is endowed with the standard topology on $\Sigma^{\pp_\ell}$ (where the standard topology on $\Sigma^0$ is the discrete topology), the measure
\begin{equation*}
\mu \defeq \lambda^{\pp_1} \otimes \cdots \otimes \lambda^{\pp_{s+1}}
\end{equation*}
defined on the product of the Borel $\sigma$-algebras on $\Sigma^{\pp_\ell}$ (and the $\sigma$-algebra of $\{0\}$ is its power set) and to the function
\begin{equation*}
(\sigma^{(1)}, ..., \sigma^{(s+1)}) \mapsto  \partial^{\pp} f \left( -h\pp_1 + 2h\sum_{\ell=1}^{\pp_1}\ell\sigma_\ell^{(1)}, ..., -h\pp_{s+1}+2h\sum_{\ell=1}^{\pp_{s+1}}\ell\sigma^{(s+1)}_\ell\right).
\end{equation*}
Moreover, note that all the simplices $\Sigma^{\pp_\ell}$ are compact and connected (in fact convex) with
\begin{equation*}
\lambda^{\pp_1}(\Sigma^{\pp_1}) \cdots \lambda^{\pp_{s+1}}(\Sigma^{\pp_{s+1}}) = \frac{1}{\pp!},
\end{equation*} 
see \Cref{def:simplex}.
Therefore, \Cref{lem:mean_val} yields the existence of a certain $(\xi^{(1)}, ..., \xi^{(s+1)}) \in D$ with
\begin{equation*}
f_{\pp, h} = \partial^{\pp} f \left( -h\pp_1 + 2h\sum_{\ell=1}^{\pp_1}\ell\xi_\ell^{(1)}, ..., -h\pp_{s+1}+2h\sum_{\ell=1}^{\pp_{s+1}}\ell\xi^{(s+1)}_\ell\right).
\end{equation*}
Hence, the claim follows letting
\begin{equation*}
\xi \defeq \left( -h\pp_1 + 2h\sum_{\ell=1}^{\pp_1}\ell\xi_\ell^{(1)}, ..., -h\pp_{s+1}+2h\sum_{\ell=1}^{\pp_{s+1}}\ell\xi^{(s+1)}_\ell\right) \in h[-m,m]^s. \qedhere
\end{equation*}
\end{proof}

\subsection{Proof of Proposition \ref{prop:nonpoly}}
\label{admissibility_reordered}

\renewcommand*{\proofname}{Proof of \Cref{prop:nonpoly}}
\begin{proof}
    Let $M \in \NN_0$. Since $\phi$ is not polyharmonic we can pick $z \in U$ with $\Delta^M \phi (z) \neq 0$. By continuity we can choose $\delta > 0$ with $B_\delta(z) \subseteq U$ and $\Delta^M \phi(w) \neq 0$ for all $w \in B_\delta(z)$. For all $m, \ell \in \NN_0$ let
    \begin{equation*}
        A_{m,\ell} \defeq \left\{ w \in B_\delta(z) : \ \wirt^m \wirtq^\ell \phi (w) = 0\right\}
    \end{equation*}
    and assume towards a contradiction that
    \begin{equation*}
        \bigcup_{m,\ell \leq M} A_{m,\ell} = B_\delta(z).
    \end{equation*}
    By \cite[Corollary 3.35]{aliprantis_infinite_2006}, $B_\delta(z)$ with its usual topology is completely metrizable. By continuity of $\wirt^m \wirtq^\ell \phi$ the sets $A_{m,\ell}$ are closed in $B_\delta(z)$. Hence, using the Baire category theorem \cite[Theorems 3.46 and 3.47]{aliprantis_infinite_2006}, there are $m,\ell \in \NN_0$ with $m,\ell \leq M$, $z' \in A_{m,\ell}$ and $\varepsilon > 0$ such that
    \begin{equation*}
        B_\varepsilon\left(z'\right) \subseteq A_{m,\ell} \subseteq B_\delta(z).
    \end{equation*}
    But thanks to $\Delta^M \phi = 4^M \wirt^M \wirtq^M \phi = 4^M \wirt^{M - \ell} \wirtq^{M-m}\wirt^\ell \wirtq^m \phi$ (see property 6 in \Cref{sec:notation_reordered}), this directly implies $\Delta^M \phi \equiv 0$ on $B_{\varepsilon} \left(z'\right)$ in contradiction to the choice of $B_\delta(z)$. 
    \end{proof}
\renewcommand*{\proofname}{Proof}

\subsection{Proof of Theorem \ref{main_1}}
\label{approx_polynomials_reordered}

The following section is dedicated to proving \Cref{main_1}.
We are going to show that it is possible to approximate complex polynomials in $z$ and $\overline{z}$
arbitrarily well on $\Omega_n$ using shallow complex-valued neural networks. 
To do so, we follow the proof sketch given after the statement of \Cref{main_1},
starting with the following lemma.

\begin{lemma}
\label{extraction}
    Let $\phi:\CC \to \CC$ and $\delta>0, \ b \in \CC, \ k \in \NN_0$, such that $\fres{\phi}{B_\delta(b)} \in C^k      \left(B_\delta(b); \CC\right)$.
    For fixed $z \in \Omega_n$, where we recall that $\Omega_n = [-1,1]^n + i [-1,1]^n$, we consider the map
    \begin{equation*}
        \phi_z: \quad B_{\frac{\delta}{\sqrt{2n}}}(0) \to \CC, \quad w \mapsto \phi\left(w^T z + b\right),
    \end{equation*}
    which is in $C^k$ since for $w \in B_{\frac{\delta}{\sqrt{2n}}}(0) \subseteq \CC^n$ we have
    \begin{equation*}
        \left\vert w^T z\right\vert \leq \Vert w \Vert_2 \cdot \Vert z \Vert_2 < \frac{\delta}{\sqrt{2n}} \cdot \sqrt{2n} = \delta.
    \end{equation*}
    For all multi-indices $\m, \elll \in \NN_0^n$ with $\vert \m + \elll \vert \leq k$ we have
    \begin{equation*}
        \wirt^\m \wirtq^{\elll} \phi_z(w) = z^\m \overline{z}^{\elll} \cdot \left(\wirt^{\vert \m \vert}\wirtq^{\vert \elll \vert}\phi\right) \left(w^T z + b \right)
    \end{equation*}
    for all $w \in B_{\frac{\delta}{\sqrt{2n}}}(0)$.
\end{lemma}

\begin{proof}
    First we prove the statement
    \begin{equation}
    \label{conj}
        \wirtq^{\elll} \phi_z(w) = \overline{z}^{\elll} \cdot (\wirtq^{\vert {\elll} \vert}\phi)\left( w^Tz +b\right) \quad \text{for all } w \in B_{\frac{\delta}{\sqrt{2n}}}(0)
    \end{equation}
    by induction over $0 \leq \vert {\elll} \vert \leq k$. The case ${\elll} = 0$ is trivial. Assuming that (\ref{conj}) holds for fixed ${\elll} \in \NN_0^n$ with $\vert \elll\vert < k $, we want to show
    \begin{equation}
    \label{conj_wirt}
        \wirtq^{{\elll} + e_j} \phi_z(w) = \overline{z}^{{\elll}+e_j} \cdot \left( \wirtq^{|{\elll}| + 1}\phi\right)\left(w^Tz +b\right)
    \end{equation}
    for all $w \in B_{\frac{\delta}{\sqrt{2n}}}(0)$, where $j \in \{1,...,n\}$ is chosen arbitrarily. To this end, first note
    \begin{align*}
        \wirtq^{{\elll} + e_j} \phi_z(w) &= \wirtq^{e_j}\wirtq^{\elll} \phi_z(w)  \overset{\text{induction}}{=} \wirtq^{e_j}\left[w \mapsto \overline{z}^{\elll} \cdot \left(\wirtq^{\vert {\elll} \vert}\phi\right)\left(w^Tz + b \right)\right] \\
        &= \overline{z}^{\elll} \wirtq^{e_j}\left[w \mapsto \left(\wirtq^{\vert {\elll} \vert}\phi\right)\left(w^Tz + b\right)\right].
    \end{align*}
    Applying the chain rule for Wirtinger derivatives and using that
    \begin{equation*}
        \wirtq^{e_j} \left[ w \mapsto w^T z +b\right]= 0
    \end{equation*} since $w \mapsto w^T z  + b$ is holomorphic in every variable, we see
    \begin{align*}
        \wirtq^{e_j}\left[w \mapsto \left(\wirtq^{\vert {\elll} \vert}\phi\right)\left(w^Tz + b\right)\right] &= \left(\wirt\wirtq^{\vert {\elll}\vert}\phi\right) \left(w^Tz +b\right) \cdot \wirtq^{e_j}\left[w \mapsto w^Tz + b\right] \\
        & \hspace{0.4cm}+ \left(\wirtq^{\vert {\elll}\vert + 1}\phi\right) \left(w^Tz +b\right) \cdot \wirtq^{e_j}\left[w \mapsto \overline{w^Tz + b}\right] \\
        &= \left(\wirtq^{\vert {\elll}\vert + 1}\phi\right) \left( w^Tz +b\right) \cdot \overline{\wirt^{e_j}\left[w \mapsto w^Tz + b\right]} \\
        &= \overline{z}^{e_j} \cdot \left(\wirtq^{\vert {\elll}\vert + 1}\phi\right) \left(w^Tz +b\right),
    \end{align*}
    using the fact that $w_j \mapsto w^Tz + b$ is holomorphic and hence
    \begin{equation*}
        \wirtq^{e_j}\left[ w \mapsto w^T z + b\right] = 0 \quad \text{and} \quad \wirt^{e_j}\left[ w \mapsto w^T z + b\right] = z_j.
    \end{equation*}
    Thus, we have proven (\ref{conj_wirt}) and induction yields (\ref{conj}). 

    It remains to show the full claim. We use induction over $\vert \m \vert$ and note that the case $\m=0$ has just been shown. We assume that the claim holds true for fixed $\m \in \NN_0^n$ with $\vert \m + \elll \vert < k$ and choose $j \in \{1,...,n\}$. Thus, we get
    \begin{align*}
        \wirt^{\m + e_j} \wirtq^{{\elll}}\phi_z(w) &= \wirt^{e_j}\wirt^\m \wirtq^{\elll} \phi_z(w) \overset{\text{IH}}{=} \wirt^{e_j}\left( w \mapsto z^\m \overline{z}^{\elll} \cdot\left(\wirt^{\vert \m \vert} \wirtq^{\vert {\elll} \vert} \phi \right)\left(w^T z + b\right)\right) \\
        &= z^\m \overline{z}^{\elll} \cdot \wirt^{e_j}\left[ w \mapsto \left(\wirt^{\vert \m \vert} \wirtq^{\vert {\elll} \vert} \phi \right)\left(w^T z + b\right)\right].
    \end{align*}
    Using the chain rule again, we calculate
    \begin{align*}
        &\norel \wirt^{e_j}\left[w \mapsto \left(\wirt^{\vert \m \vert} \wirtq^{\vert {\elll} \vert} \phi \right)\left(w^T z + b\right)\right] \\
        &= \left(\wirt^{\vert \m \vert +1}\wirtq^{\vert {\elll} \vert}\phi\right)\left(w^Tz + b\right) \cdot \wirt^{e_j}\left[ w \mapsto w^Tz + b\right] \\
        &\norel + \left(\wirt^{\vert \m \vert }\wirtq^{\vert {\elll} \vert + 1}\phi\right)\left(w^Tz + b\right) \cdot \wirt^{e_j}\left[w \mapsto \overline{w^Tz + b}\right] \\
        &= z^{e_j}\cdot \left(\wirt^{\vert \m \vert +1}\wirtq^{\vert {\elll} \vert}\phi\right)\left(w^Tz + b\right) + \left(\wirt^{\vert \m \vert }\wirtq^{\vert {\elll} \vert + 1}\phi\right)\left(w^Tz + b\right) \cdot \overline{\wirtq^{e_j}\left[w \mapsto w^Tz + b\right]} \\
        &=z^{e_j}\cdot \left(\wirt^{\vert \m \vert +1}\wirtq^{\vert {\elll} \vert}\phi\right)\left( w^Tz + b\right).
    \end{align*}
    By induction, this proves the claim.
\end{proof}

As the last preparation for the proof of \Cref{main_1}, we need the following lemma.

\begin{lemma}
\label{previous}
    Let $\phi:\CC \to \CC$ and $\delta>0, \ b \in \CC, \ k \in \NN_0$, such that $\fres{\phi}{B_\delta(b)} \in C^k      \left(B_\delta(b); \CC\right)$. Let $m,n \in \NN$ and $\varepsilon>0$. For $\pp \in \NN_0^{2n}, h>0$ and $z \in \Omega_n$ we write
    \begin{align*}
        \Phi_{\pp,h} (z)  &\defeq (2h)^{-\vert \pp \vert} \sum_{0 \leq \textbf{r} \leq \textbf{p}} (-1)^{\vert \pp \vert -\vert \rr \vert} \binom{\textbf{p}}{\textbf{r}} \left(\phi_z \circ \varphi_n\right)\left(h(2\rr - \pp)\right) \\ 
        &=(2h)^{-\vert \pp \vert} \sum_{0 \leq \textbf{r} \leq \textbf{p}} (-1)^{\vert \pp \vert -\vert \rr \vert} \binom{\textbf{p}}{\textbf{r}} \phi \left( \left[\varphi_n\left(h(2\rr-\pp)\right)\right]^T \cdot z + b\right),
    \end{align*}
where $\phi_z$ is as introduced in \Cref{extraction} and $\varphi_n$ is as in \eqref{isomorphism_intro}.
    Furthermore, let
    \begin{equation*}
        \phi_\pp : \quad \Omega_n \times B_{\frac{\delta}{\sqrt{2n}}}(0)\to \CC, \quad (z,w) \mapsto \partial^\pp \phi_z(w).
    \end{equation*}
    Then there exists $h^* > 0$ such that
    \begin{equation*}
        \Vert \Phi_{\pp,h} - \phi_\pp(\cdot, 0) \Vert_{L^\infty\left(\Omega_n; \CC\right)} \leq \varepsilon
    \end{equation*}
    for all $\pp \in \NN_0^{2n}$ with $\vert \pp \vert \leq k$ and $ \pp \leq m$ and $h \in (0, h^*)$.
\end{lemma}

\begin{proof}
    Fix $\pp \in \NN_0^{2n}$ with $\vert \pp \vert \leq k$ and $  \pp \leq m$. The map
    \begin{equation*}
        B_{\sqrt{2n} + 1}(0) \times B_{\delta / (\sqrt{2n} + 1)} (0) \to \CC, \quad (z,w) \mapsto \phi \left(w^T z + b\right)
    \end{equation*}
    is in $C^k$ since
    \begin{equation*}
        \left\vert w^T z \right\vert \leq \Vert w \Vert \cdot \Vert z \Vert < \frac{\delta}{\sqrt{2n} + 1} \cdot (\sqrt{2n} + 1) = \delta.
    \end{equation*}
    Therefore, the map 
    \begin{equation*}
        B_{\sqrt{2n} + 1}(0) \times B_{\delta / (\sqrt{2n} + 1)} (0) \to \CC, \quad (z,w) \mapsto \partial^\pp \phi_z(w)
    \end{equation*}
    is continuous and in particular uniformly continuous on the compact set
    \begin{equation*}
        \Omega_n \times \overline{B_{\delta/(3n)}}(0) \subseteq B_{\sqrt{2n} + 1}(0) \times B_{\delta / (\sqrt{2n} + 1)}(0).
    \end{equation*}
    Here, we employed $\sqrt{2n}+1 < 3n$ for every $n \in \NN$. Hence, there exists $ h_\pp \in (0,\frac{\delta}{3n \cdot \sqrt{2n} \cdot m})$, such that
    \begin{equation*}
        \left\vert \phi_\pp(z, \xi) - \phi_\pp (z,0)\right\vert \leq \frac{\varepsilon}{\sqrt{2}}
    \end{equation*}
    for all $\xi \in \varphi_{n} \left(h \cdot [-m,m]^{2n}\right), \ h \in (0, h_\pp)$ and $z \in \Omega_n$.
    Now fix such an $h \in (0, h_\pp)$ and $z \in \Omega_n$.
    Applying \Cref{div_differences_mainresult} to both components of
    $\left(\varphi_1^{-1} \circ \Phi_{\pp, h}\right)(z)$
    and $\varphi_1^{-1} \circ \phi_z \circ \fres{\varphi_n}{\left(-\frac{\delta}{3n},\frac{\delta}{3n}\right)^{2n}}$
    separately yields the existence of two real vectors $\xi_{1}, \ \xi_{2} \in h \cdot [-m,m]^{2n}$, such that
    \begin{align*}
        \left(\varphi_1^{-1} \circ \Phi_{\pp,h}(z)\right)_1
        & = \left[\partial^\pp \left(\varphi_1^{-1} \circ \phi_z \circ \varphi_n\right)\left(\xi_1\right)\right]_1 \\
        \text{and} \quad
        \left(\varphi_1^{-1} \circ \Phi_{\pp,h}(z)\right)_2
        & = \left[\partial^\pp \left(\varphi_1^{-1} \circ \phi_z \circ \varphi_n\right)\left(\xi_2\right)\right]_2.
    \end{align*}
    Rewriting this yields
    \begin{equation*}
        \RE\left(\Phi_{\pp,h}(z)\right) = \RE\left(\phi_\pp (z, \varphi_n\left(\xi_1\right))\right) \quad \text{and}\quad  \IM\left(\Phi_{\pp,h}(z)\right) = \IM\left(\phi_\pp (z, \varphi_n\left(\xi_2\right))\right).
    \end{equation*}
    Using this property, we deduce
    \begin{equation*}
        \left\vert \text{Re}\left( \Phi_{\pp,h}(z) - \phi_\pp(z,0)\right)\right\vert = \left\vert \text{Re}\left( \phi_\pp(z, \varphi_n\left(\xi_{1}\right)) - \phi_\pp(z,0)\right)\right\vert \leq \left\vert \phi_\pp(z, \varphi_n\left(\xi_{1}\right)) - \phi_\pp (z,0)\right\vert \leq \frac{\varepsilon}{\sqrt{2}}
    \end{equation*}
    and analogously for the imaginary part. Since $z \in \Omega_n$ and $h \in \left(0, h_\pp \right)$ have been chosen arbitrarily we get the claim by choosing
    \begin{equation*}
        h^* \defeq \text{min} \left\{ h_\pp  : \  \pp \in \NN_0^{2n} \text{ with } \vert \pp \vert \leq k \text{ and }\pp \leq m\right\}. \qedhere
    \end{equation*}
\end{proof}

Using the previous two lemmas and the results from \Cref{sec:div_diff_reordered}
and \Cref{admissibility_reordered}, we can now prove \Cref{main_1}.

\begin{proof}[Proof of \Cref{main_1}]
Let $b \in U$ satisfy 
\begin{equation*}
\wirt ^{\ell_1} \wirtq^{\ell_2} \phi(b) \neq 0 \quad \text{for all } \ell_1, \ell_2 \in \NN_0 \text{ with } \ell_1, \ell_2\leq mn.
\end{equation*}
Such a point $b$ exists according to \Cref{prop:nonpoly}.
Let $p \in \PP'$ and fix $\m, \elll \in \NN_0^{n}$ with $ \m, \elll \leq m$. For each $z \in \Omega_n$, using \Cref{extraction}, we then have
\begin{align}
    z^\m \overline{z}^{\elll} &= \left[\left( \wirt^{\vert\m\vert} \wirtq^{\vert\elll\vert} \phi\right)(b)\right]^{-1}\wirt^\m \wirtq^{\elll} \phi_z (0) \nonumber\\
    \label{eq:monomial_equal}
    \overset{\text{Prop. }\mathrm{\ref{wirtreal}}}&{=} \left[\left( \wirt^{\vert\m\vert} \wirtq^{\vert\elll\vert} \phi\right)(b)\right]^{-1}  \cdot \underset{\pp' + \pp'' = \m + \elll}{\underset{ \pp = (\pp', \pp'') \in \NN_0^{2n}}{\sum}} b_{\pp', \pp''}\partial^{(\pp', \pp'')} \phi_z(0)
\end{align}
with suitably chosen complex coefficients $b_{\pp', \pp''} \in \CC$ depending only on $\pp', \ \pp'', \ \m$ and $\elll$. Here we used that $\vert \m \vert, \ \vert \elll \vert \leq mn$.
Since $\mathcal{P}' \subseteq \mathcal{P}_m^n$ is bounded and $p \in \PP'$, we can write
\begin{equation*}
p(z) = \underset{\m, \elll \leq m}{\sum_{\m, \elll \in \NN_0^n}} a_{\m, \elll} z^\m \overline{z}^{\elll}
\end{equation*}
with $\vert a_{\m, \elll}\vert \leq c$ for some constant $c = c(\PP') > 0$. In combination with \eqref{eq:monomial_equal}, this easily implies that we can rewrite $p$ as
\begin{equation} \label{eq:p_form}
    p(z) = \underset{\pp  \leq 2m}{\sum_{\pp \in \NN_0^{2n}}} c_\pp(p) \partial^\pp \phi_z(0)
\end{equation}
with coefficients $c_\pp(p) \in \CC$ satisfying $\vert c_\pp (p)\vert \leq c'$ for some constant $c' = c'(\phi, b, \PP', m, n)$. By \Cref{previous}, we choose $h^*>0$, such that
\begin{equation*}
    \left\vert \Phi_{\pp,h^*}(z) - \partial^\pp\phi_z(0)\right\vert \leq \frac{\varepsilon}{\sum_{ \qq \in \NN_0^{2n},\qq \leq 2m}c'}
\end{equation*}
for all $z \in \Omega_n$ and $\pp \in \NN_0^{2n}$ with $ \pp \leq 2m$. Furthermore, we can rewrite each function $\Phi_{\pp, h^*}$ as
\begin{equation*}
    \Phi_{\pp, h^*}(z) = \sum_{\underset{\vert \aalpha_j \vert \leq 2m  \ \forall j}{\aalpha \in \Z^{2n}}} \lambda_{\aalpha, \pp} \phi(\left[\varphi_n\left(h^* \aalpha\right)\right]^T \cdot z + b)
\end{equation*}
with suitable coefficients $\lambda_{\aalpha, \pp} \in \CC$. Since the cardinality of the set 
\begin{equation*}
    \left\{ \aalpha \in \Z^{2n} : \ \left\vert \aalpha_j \right\vert \leq 2m \ \forall j\right\}
\end{equation*}
is $(4m+1)^{2n}$, this can be converted to 
\begin{equation*}
   \Phi_{\pp, h^*}(z) =  \sum_{j=1}^N \lambda_{j,\pp} \phi \left( \rho_j^T \cdot z + b\right).
\end{equation*}
For $p$ as in \eqref{eq:p_form}, we then define
\begin{equation*}
    \theta(z) \defeq \underset{\pp \leq 2m}{\sum_{\pp \in \NN_0^{2n}}} c_\pp(p) \cdot \Phi_{\pp, h^*}(z) = \sum_{j=1}^{N} \left[ \left(\underset{\pp \leq 2m}{\sum_{\pp \in \NN_0^{2n}}}c_{\pp}(p) \lambda_{j, \pp} \right)\phi(\rho_j^T \cdot z + b)\right]
\end{equation*}
and note
\begin{equation*}
    \left\vert \theta(z) - p(z)\right\vert \leq \sum_{\pp \leq 2m} \left\vert c_\pp(p) \right\vert \cdot \left\vert \Phi_{\pp, h^*}(z) - \partial^\pp \phi_z(0)\right\vert \leq \varepsilon.
\end{equation*}
Since the coefficients $\rho_j$ have been chosen independently of the polynomial $p$, we can rewrite $\theta$ in the desired form.
\end{proof}

\section{Postponed proofs for the approximation of \texorpdfstring{$C^k$}{Cᵏ}-functions}

\subsection{Prerequisites from Fourier Analysis} \label{sec:fourier_reordered}
This section is dedicated to reviewing some notations and results from Fourier Analysis. In the end, a quantitative result for the approximation of $C^k \left( [-1,1]^s; \RR\right)$-functions using linear combinations of multivariate Chebyshev polynomials is derived; see \Cref{app: fourier_approx}.

We start by recalling several notations and concepts from Fourier Analysis. 
\begin{definition}
    Let $s \in \NN$ and $k \in \NN_0$. We define
    \begin{equation*}
        C_{2\pi}^k\left(\RR^s; \CC\right) \defeq \left\{ f \in C^k \left(\RR^s; \CC\right) : \ \forall \pp \in \ZZ^s \ \forall x \in \RR^s : \ f(x + 2\pi \pp ) = f(x)\right\}.
    \end{equation*}
    and $C_{2\pi}\left(\RR^s; \CC\right) \defeq C_{2\pi}^0\left(\RR^s; \CC\right)$. For a function $f \in C_{2\pi}^k \left(\RR^s; \CC\right)$ we write 
\begin{align*}
 \left\Vert f \right\Vert_{C^k \left([-\pi, \pi]^s ; \CC\right)} &\defeq \underset{\vert \kk \vert \leq k}{\underset{\kk \in \NN_0^s}{\max}} \left\Vert \partial^\kk f\right\Vert_{L^\infty \left([-\pi, \pi]^s; \CC\right)} \text{ and } \\
\left\Vert f \right\Vert_{L^p \left([-\pi, \pi]^s ; \CC \right)} &\defeq \left(\frac{1}{(2\pi)^s} \cdot \int_{[-\pi, \pi]^s} \left\vert f(x) \right\vert^p dx \right)^{1/p} \text{ for } p \in [1, \infty).
\end{align*}
Moreover, we set $\Vert f \Vert_{L^\infty([-\pi, \pi]^s ; \RR)} \defeq \left\Vert f \right\Vert_{C^0 \left([-\pi, \pi]^s ; \CC\right)}$.
\end{definition}

\begin{definition}
    For any $s \in \NN$ and $\kk \in \Z^s$, we write
    \begin{equation*}
        e_\kk : \quad \RR^s \to \CC, \quad e_\kk (x) = e^{i \langle \kk, x\rangle}
    \end{equation*}
    where $\langle \cdot, \cdot \rangle$ denotes the usual inner product of two vectors. 
    For any $f \in C_{2\pi} \left(\RR^s; \CC\right)$ we define the $\kk$-th Fourier coefficient of $f$ to be
    \begin{equation*}
        \hat{f}(\kk) \defeq \frac{1}{(2\pi)^s} \int_{[-\pi, \pi]^s} f(x) \overline{e_{\kk}(x)}dx.
    \end{equation*}
\end{definition}
\begin{definition}
    For two functions $f,g \in C_{2\pi}\left(\RR^s;\CC\right)$, we define their convolution as
    \begin{equation*}
        f * g : \quad \RR^s \to \CC, \quad (f*g)(x) \defeq \frac{1}{(2\pi)^s} \int_{[-\pi, \pi]^s} f(t)g(x-t) dt.
    \end{equation*}
\end{definition}
In the following we define several so-called kernels.
\begin{definition} 
     Let $m \in \NN_0$ be arbitrary.
    \begin{enumerate}
        \item The $m$-th one-dimensional \emph{Dirichlet-kernel} is defined as
        \begin{equation*}
            D_m \defeq \sum_{h= - m}^{m} e_h.
        \end{equation*}
        \item The $m$-th one-dimensional \emph{Fejèr-kernel} is defined as
        \begin{equation*}
            F_m \defeq \frac{1}{m}\sum_{h=0}^{m-1} D_h.
        \end{equation*}
        \item The $m$-th one-dimensional \emph{de-la-Vallée-Poussin-kernel} is defined as
        \begin{equation*}
            V_m \defeq \left( 1 + e_m + e_{-m}\right) \cdot F_m.
        \end{equation*}
        \item Let $s \in \NN$. We extend the above definitions to dimension $s$ by letting
        \begin{align*}
            D_m^s \left(x_1, ..., x_s\right) &\defeq \prod_{p=1}^s D_m \left(x_p\right), \\
            F_m^s \left(x_1, ..., x_s\right) &\defeq \prod_{p=1}^s F_m \left(x_p\right), \\
            V_m^s \left(x_1, ..., x_s\right) &\defeq \prod_{p=1}^s V_m \left(x_p\right). 
        \end{align*}
    \end{enumerate}
\end{definition}
We need the following property of the multivariate extension of the de-la-Vallée-Poussin-kernel.
\begin{proposition} \label{prop: dlvp-bound}
    Let $m,s \in \NN$. Then one has $\left\Vert V_m^s \right\Vert_{L^1 \left([- \pi, \pi]^s; \CC\right)} \leq 3^s$.
\end{proposition}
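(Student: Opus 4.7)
The plan is to reduce to the one-dimensional case via the product structure of $V_m^s$ and Fubini's theorem, and then to bound $|V_m|$ pointwise by $3 F_m$ using only that $F_m \geq 0$ and $|e_{\pm m}| \equiv 1$.

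First I would recall the two standard facts about the one-dimensional Fejér kernel that make everything work: namely that $F_m \geq 0$ (via the closed form $F_m(x) = \frac{1}{m}\bigl(\frac{\sin(mx/2)}{\sin(x/2)}\bigr)^2$) and that $\|F_m\|_{L^1([-\pi,\pi];\CC)} = 1$ with respect to the normalized measure $\frac{dx}{2\pi}$. The normalization is immediate from $\frac{1}{2\pi}\int_{-\pi}^\pi e_h(x)\,dx = \delta_{h,0}$, which gives $\frac{1}{2\pi}\int_{-\pi}^\pi D_h = 1$ for every $h \in \NN_0$ and therefore $\frac{1}{2\pi}\int_{-\pi}^\pi F_m = 1$.

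Next I would observe that pointwise for every $x \in \RR$,
\begin{equation*}
|V_m(x)| = |1 + e_m(x) + e_{-m}(x)| \cdot F_m(x) \leq 3 \, F_m(x),
\end{equation*}
using $F_m(x) \geq 0$ and $|e_{\pm m}(x)| = 1$. Integrating and normalizing gives $\|V_m\|_{L^1([-\pi,\pi];\CC)} \leq 3$.

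Finally, the product structure $V_m^s(x_1, \dots, x_s) = \prod_{p=1}^s V_m(x_p)$ together with Fubini's theorem yields
\begin{equation*}
\|V_m^s\|_{L^1([-\pi,\pi]^s;\CC)} = \frac{1}{(2\pi)^s} \int_{[-\pi,\pi]^s} \prod_{p=1}^s |V_m(x_p)|\, dx = \prod_{p=1}^s \|V_m\|_{L^1([-\pi,\pi];\CC)} \leq 3^s,
\end{equation*}
which is the desired estimate. There is no genuine obstacle here; the only point worth being careful about is keeping track of the normalization constant $(2\pi)^{-s}$ when factoring the multidimensional integral, so that the one-dimensional bound $\|V_m\|_{L^1} \leq 3$ can be applied in each coordinate separately.
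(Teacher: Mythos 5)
Your proposal is correct and follows essentially the same route as the paper: both reduce to the one-dimensional bound $\|V_m\|_{L^1} \leq 3$ via the triangle inequality and $\|F_m\|_{L^1} = 1$, then factor the multidimensional $L^1$ norm using the product structure of $V_m^s$ and Tonelli/Fubini. The only cosmetic difference is that you derive $\|F_m\|_{L^1} = 1$ from $F_m \geq 0$ and $\frac{1}{2\pi}\int D_h = 1$, whereas the paper cites this fact from a reference.
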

\begin{proof}
   From \cite[Exercise 1.3 and Lemma 1.4]{muscalu_classical_2013} it follows $\Vert F_m\Vert_{L^1 ([-\pi, \pi] ; \CC)} = 1$ and hence using the triangle inequality $\Vert V_m \Vert_{L^1 ([-\pi,\pi] ; \CC)} \leq 3$. The claim then follows using Tonelli's theorem.
\end{proof}
The following definition introduces the term of trigonometric polynomial. 
\begin{definition}
    For any $s \in \NN$ and $m \in \NN_0$ we call a function of the form
    \begin{equation*}
    \RR^s \to \CC, \quad x \mapsto \underset{-m \leq \kk \leq m}{\sum_{\kk \in \ZZ_0^s}} a_\kk e^{i \langle \kk , x\rangle}
\end{equation*}
with coefficients $a_\kk \in \CC$ a \emph{trigonometric polynomial of coordinatewise degree at most $m$} and denote the space of all those functions with $\mathbb{H}_m^s$. Here, we consider the sum over all $\kk \in \ZZ^s$ with $-m \leq \kk_j \leq m$ for all $j \in \{1,...,s\}$. We then write
\begin{equation} \label{eq:mintrigo}
    E^s_m(f) \defeq \underset{T \in \mathbb{H}_{m}^s}{\mathrm{min}} \left\Vert f - T\right\Vert_{L^\infty \left(\RR^s;\CC\right)}
\end{equation}
for any function $f \in C_{2\pi} \left(\RR^s ; \CC\right)$.
\end{definition}
The following proposition shows that convolving with the Fejèr kernel produces a trigonometric polynomial of degree at most $2m-1$, while reproducing trigonometric polynomials of degree $m$. Furthermore, the norm of the convolution operator is bounded uniformly in $m$. These properties will be useful for our proof of \Cref{app: fourier_approx}.
\begin{proposition}
\label{vm}
    Let $s,m \in \NN$ and $k \in \NN_0$. The map
    \begin{equation*}
        v_m : \quad C_{2\pi} \left(\RR^s; \CC\right) \to \mathbb{H}_{2m-1}^s, \quad f \mapsto f * V_m^s 
    \end{equation*}
    is well-defined and satisfies
    \begin{equation}
    \label{ident}
        v_m(T) = T \quad \text{for all} \quad T \in \mathbb{H}_m^s.
    \end{equation}
    Furthermore, there exists a constant $c = c(s) > 0$ (independent of $m$), such that
    \begin{align}
        \left\Vert v_m(f)\right\Vert_{C^k\left([-\pi,\pi]^s; \CC\right)} \leq c \cdot \left\Vert f\right\Vert_{C^k\left([-\pi,\pi]^s; \CC\right)} \ \forall f \in C^k_{2\pi} \left(\RR^s; \CC\right), \nonumber \\
        \label{constant}
        \left\Vert v_m(f)\right\Vert_{L^\infty \left([\pi,\pi]^s; \CC\right)} \leq c \cdot \left\Vert f\right\Vert_{L^\infty \left([-\pi, \pi]^s; \CC\right)} \ \forall f\in C_{2\pi} \left(\RR^s; \CC\right).
    \end{align}
    In fact, it holds $c(s) \leq \exp(C \cdot s)$ with an absolute constant $C>0$.
\end{proposition}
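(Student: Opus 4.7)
The plan is to unpack the definition of $V_m^s$ in the Fourier domain and exploit the classical fact that convolution acts as a Fourier multiplier. First I would establish that $v_m$ indeed lands in $\mathbb{H}_{2m-1}^s$: since $F_m$ is a trigonometric polynomial of degree $m-1$ (being an average of the Dirichlet kernels $D_0, \dots, D_{m-1}$), multiplying by $1 + e_m + e_{-m}$ yields a one-dimensional trigonometric polynomial of degree $2m-1$, so the tensor product $V_m^s$ has coordinatewise degree at most $2m-1$. Writing $V_m^s = \sum_{\|\kk\|_\infty \leq 2m-1} \widehat{V_m^s}(\kk)\, e_\kk$ and plugging into the convolution integral, one checks $f * e_\kk = \hat f(\kk)\, e_\kk$, hence $v_m(f) = \sum_{\|\kk\|_\infty \leq 2m-1} \hat f(\kk) \widehat{V_m^s}(\kk)\, e_\kk \in \mathbb{H}_{2m-1}^s$.

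Next I would prove the reproducing identity \eqref{ident}. The key computation is the well-known formula $\hat F_m(h) = \max\{0,\, 1-|h|/m\}$ for $h \in \ZZ$. Using the trivial identity $\widehat{e_{\pm m} g}(h) = \hat g(h \mp m)$, this gives
\begin{equation*}
    \hat V_m(h) = \hat F_m(h) + \hat F_m(h-m) + \hat F_m(h+m),
\end{equation*}
which one checks equals $1$ for all $|h| \leq m$ and vanishes for $|h| \geq 2m$. Tensorizing yields $\widehat{V_m^s}(\kk) = 1$ for every $\kk \in \ZZ^s$ with $-m \leq \kk \leq m$. Consequently, for $T = \sum_{-m \leq \kk \leq m} a_\kk\, e_\kk \in \mathbb{H}_m^s$ the Fourier multiplier representation from the previous paragraph gives $v_m(T) = \sum_{-m \leq \kk \leq m} a_\kk \cdot 1 \cdot e_\kk = T$.

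For the quantitative bounds I would use Young's convolution inequality together with \Cref{prop: dlvp-bound}. The $L^\infty$-bound follows directly from $\|f * V_m^s\|_{L^\infty} \leq \|f\|_{L^\infty} \|V_m^s\|_{L^1([-\pi,\pi]^s;\CC)} \leq 3^s \|f\|_{L^\infty}$, so $c \defeq 3^s$ works in \eqref{constant}. For the $C^k$-bound, note that differentiation under the integral sign gives $\partial^\pp (f * V_m^s) = (\partial^\pp f) * V_m^s$ for every multi-index $\pp$ with $|\pp| \leq k$; this is valid on $[-\pi,\pi]^s$ where everything is bounded and continuous. Applying the $L^\infty$-bound coordinatewise and taking the maximum over $|\pp| \leq k$ produces $\|v_m(f)\|_{C^k([-\pi,\pi]^s;\CC)} \leq 3^s \|f\|_{C^k([-\pi,\pi]^s;\CC)}$, completing the proof with $c = 3^s$ depending only on $s$.

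I do not expect any serious obstacle here; the only slight subtlety is bookkeeping the Fourier coefficients of $V_m$ carefully to verify the reproduction identity. Once that combinatorial check is done, everything else reduces to standard Young-type estimates and commuting derivatives with convolution.
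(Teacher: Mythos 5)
Your proposal is correct and follows essentially the same route as the paper: the reproducing identity is verified by showing that $V_m$ acts as the constant multiplier $1$ on frequencies $|\kk|_\infty \le m$, and the bounds come from Young's inequality together with $\Vert V_m^s \Vert_{L^1} \le 3^s$ and commuting $\partial^\pp$ with the convolution. The only cosmetic difference is that you compute the Fourier coefficients $\hat V_m(h) = \hat F_m(h) + \hat F_m(h-m) + \hat F_m(h+m)$ via the known formula for $\hat F_m$, whereas the paper directly evaluates the convolutions $e_k * F_m$, $e_k * (e_{\pm m} F_m)$; these are the same calculation in dual notation, since $e_k * V_m = \hat V_m(k)\, e_k$.
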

\begin{proof}
    A direct computation shows that $f \ast e_{\kk} = \hat{f}(\kk) \cdot e_{\kk}$. This implies that $v_m$ is well-defined since $V_m^s$ is a trigonometric polynomial of coordinatewise degree at most $2m-1$. 

    The operator is bounded on $C^k_{2\pi}(\RR^s;\CC)$ and $C_{2\pi}(\RR^s;\CC)$ with norm at most $c = 3^s$, as follows from Young's inequality \cite[Lemma 1.1 (ii)]{muscalu_classical_2013}, \Cref{prop: dlvp-bound}, and the fact that one has for all $\kk \in \NN_0^s$ with $\vert \kk \vert \leq k$ the identity
    \begin{equation*}
         \partial^\kk \left( f * V_m^s\right) = \left(\partial^\kk f\right) * V_m^s \quad \text{for } f \in C^k_{2\pi}(\RR^s ; \CC).
    \end{equation*}
    It remains to show that $v_m$ is the identity on $\mathbb{H}_m^s$. We first prove that 
    \begin{equation}
    \label{firstident}
        e_k * V_m = e_k
    \end{equation}
    holds for all $k \in \Z$ with $\vert k \vert \leq m$. First note that
    \begin{equation*}
        e_k * V_m = e_k * F_m + e_k * \left(e_m \cdot F_m\right) + e_k * \left(e_{-m} \cdot F_m\right).
    \end{equation*}
    We then compute
    \begin{align*}
        e_k * F_m = \frac{1}{m} \sum_{\ell = 0}^{m-1} D_\ell * e_k = \frac{1}{m}\sum_{\ell = 0}^{m-1} \sum_{h = - \ell}^{\ell} \underbrace{e_h * e_k}_{= \delta_{k,h} \cdot e_k} = \frac{1}{m} \sum_{\ell = \vert k \vert}^{m-1} e_k = \frac{m - \vert k \vert}{m} \cdot e_k.
    \end{align*}
    Similarly, we get
    \begin{align*}
        e_k *\left( e_m \cdot F_m \right) &= \frac{1}{m} \sum_{\ell = 0}^{m-1} \left( e_m D_\ell \right) * e_k = \frac{1}{m}\sum_{\ell = 0}^{m-1} \sum_{h = - \ell}^{\ell} \underbrace{e_{h+m} * e_k}_{= \delta_{k,h+m} \cdot e_k}\\
        & = \frac{1}{m} \underset{\ell \geq m-k}{\sum_{0 \leq \ell \leq m-1 }} e_k = \delta_{k \geq 1} \cdot \frac{k}{m} \cdot e_k
    \end{align*}
    and 
    \begin{align*}
        e_k *\left( e_{-m} \cdot F_m \right) &= \frac{1}{m} \sum_{\ell = 0}^{m-1} \left( e_{-m} D_\ell \right) * e_k = \frac{1}{m}\sum_{\ell = 0}^{m-1} \sum_{h = - \ell}^{\ell} \underbrace{e_{h-m} * e_k}_{= \delta_{k,h-m} \cdot e_k} \\
        &= \frac{1}{m} \underset{\ell \geq k+m}{\sum_{0 \leq \ell \leq m-1}} e_k = \delta_{k \leq -1} \cdot \frac{-k}{m} \cdot e_k.
    \end{align*}
    Adding up those three identities yields (\ref{firstident}). 

    To finally prove \eqref{ident}, it clearly suffices to show that 
    \begin{equation*}
        e_\kk * V_m^s = e_\kk
    \end{equation*}
    for all $\kk \in \Z^s$ with $-m \leq \kk \leq m$. But for such $\kk$, using $e_\kk(x) = \prod_{j=1}^{s} e_{\kk_j}\left( x_j\right)$, one obtains
    \begin{align*}
        \left(e_\kk * V_m^s\right)(x) &= \frac{1}{(2\pi)^s} \int_{[-\pi, \pi]^s} \prod_{j=1}^{s} e_{\kk_j} \left(t_j\right) \cdot V_m \left(x_j - t_j\right) dt \\
        \overset{\text{Fubini}}&{=} \prod_{j=1}^s \left(e_{\kk_j} * V_m\right)\left(x_j \right) 
        \overset{\eqref{firstident}}{=} \prod_{j=1}^s e_{\kk_j} \left(x_j\right) 
        = e_\kk(x)
    \end{align*}
    for any $ x \in \RR^s$, as was to be shown.
\end{proof}
The following result follows from a theorem in \cite{lorentz_approximation_2005}.
\begin{proposition}
\label{lorentz_aussage}
    Let $s,k \in \NN$. Then there exists a constant $c = c(s,k) > 0$, such that, for $E_m^s$ as defined in \eqref{eq:mintrigo},
    \begin{equation*}
        E_m^s(f) \leq \frac{c}{m^k} \cdot \left\Vert f\right\Vert_{C^k\left([-\pi, \pi]^s; \RR\right)}
    \end{equation*}
    for all $m\in \NN$ and $f \in C^k_{2\pi} \left(\RR^s; \RR\right)$. \newline
    In fact, it holds $c(s,k) \leq \exp(C \cdot ks) \cdot k^k $ with an absolute constant $C>0$.
\end{proposition}
\begin{proof}
    We apply \cite[Theorem 6.6]{lorentz_approximation_2005} with $n_i = m$ and $p_i = k$, which yields the existence of a constant $c_1 = c_1(s,k)>0$, such that
    \begin{equation*}
        E_m^s(f) \leq c_1 \cdot \sum_{\ell=1}^s \frac{1}{m^k} \cdot \omega_{\ell} \left(f,\frac{1}{m}\right)
    \end{equation*}
    for all $m \in \NN$ and $f \in C^k_{2\pi}(\RR^s ; \RR)$, where $\omega_\ell(f, \bullet)$ denotes the modulus of continuity of $\frac{\partial^k f }{\partial x_\ell ^k}$ with respect to $x_\ell$, where we have the trivial bound
    \begin{equation*}
         \omega_\ell \left(f,\frac{1}{m}\right)  \leq 2 \cdot \left\Vert f \right\Vert_{C^k\left([-\pi, \pi]^s; \RR\right)}.
    \end{equation*}
    Hence, we get
    \begin{equation*}
        E_m^s(f) \leq c_1 \cdot s \cdot 2 \cdot \left\Vert f \right\Vert_{C^k\left([-\pi, \pi]^s; \RR\right)} \frac{1}{m^k},
    \end{equation*}
    so the claim follows by choosing $c := 2s \cdot c_1$.
    
    We refer to \Cref{sec:const_bound_reordered} (see \Cref{thm:const_lorentz_bound}) for a proof of the claimed bound on the constant $c(s,k)$.
\end{proof}
The above proposition bounds the best possible error of approximating $f$ by trigonometric polynomials of coordinatewise degree at most $m$, but this is in general non-constructive. Our next result shows that a similar bound holds for approximating $f$ by $v_m(f)$.
\begin{theorem}
\label{vm_approx}
    Let $s \in \NN$. Then there exists a constant $ c = c(s) > 0 $, such that the operator $v_m$ from \Cref{vm} satisfies
    \begin{equation*}
        \left\Vert f - v_m(f) \right\Vert_{L^\infty \left( \RR^s\right)} \leq c \cdot E^s_m(f)
    \end{equation*}
    for any $m \in \NN$ and $f \in C_{2\pi} \left(\RR^s; \CC\right)$. \newline
    In fact, it holds $c(s) \leq \exp(C \cdot s)$ with an absolute constant $C>0$.
\end{theorem}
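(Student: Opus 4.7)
The plan is to use the standard argument that a bounded projection onto a subspace yields a quasi-optimal approximation. Since $\mathbb{H}_m^s$ is finite-dimensional, the infimum in the definition of $E_m^s(f)$ is attained, so I would pick $T \in \mathbb{H}_m^s$ with $\Vert f - T\Vert_{L^\infty(\RR^s)} = E_m^s(f)$.

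The key ingredients are already established in \Cref{vm}: the identity $v_m(T) = T$ for all $T \in \mathbb{H}_m^s$ (i.e., $v_m$ acts as the identity on trigonometric polynomials of coordinatewise degree at most $m$, which is where it is crucial that $V_m^s$ is a de-la-Vall\'ee-Poussin-type kernel rather than a Fej\'er kernel), together with the $L^\infty$-operator bound \eqref{constant}, which gives $\Vert v_m(g)\Vert_{L^\infty} \leq c \cdot \Vert g\Vert_{L^\infty}$ for all $g \in C_{2\pi}(\RR^s; \CC)$ with a constant $c = c(s) > 0$ independent of $m$.

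With $T$ as above, I would then write
\begin{align*}
\Vert f - v_m(f)\Vert_{L^\infty(\RR^s)}
&\leq \Vert f - T\Vert_{L^\infty(\RR^s)} + \Vert T - v_m(f)\Vert_{L^\infty(\RR^s)} \\
&= \Vert f - T\Vert_{L^\infty(\RR^s)} + \Vert v_m(T - f)\Vert_{L^\infty(\RR^s)} \\
&\leq \Vert f - T\Vert_{L^\infty(\RR^s)} + c \cdot \Vert T - f\Vert_{L^\infty(\RR^s)} \\
&= (1 + c) \cdot E_m^s(f),
\end{align*}
using $T = v_m(T)$ in the second line, linearity of $v_m$, and the operator bound \eqref{constant} in the third line. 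The final constant is then $1 + c$, depending only on $s$. There is no real obstacle here; the whole content has been packaged into \Cref{vm}, and this theorem is essentially the statement that a bounded projection $v_m$ onto the approximating subspace $\mathbb{H}_m^s$ yields an approximant that is within a constant factor of the best one (a Lebesgue-type inequality).
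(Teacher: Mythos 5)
Your proof is correct and follows essentially the same route as the paper's: triangle inequality around a best (or near-best) approximant $T \in \mathbb{H}_m^s$, then use $v_m(T) = T$ from \eqref{ident} together with the $L^\infty$-operator bound \eqref{constant} to absorb the second term into $(1+c)\,\Vert f - T\Vert_{L^\infty}$. The only cosmetic difference is that the paper takes the infimum over all $T$ at the end rather than first fixing a minimizer; the argument is otherwise identical.
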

\begin{proof}
    For any $T \in \mathbb{H}_m^s$ one has
    \begin{equation*}
        \left\Vert f - v_m(f) \right\Vert_{L^\infty \left( \RR^s\right)} \overset{\eqref{ident}}{\leq} \left\Vert f - T \right\Vert_{L^\infty \left( \RR^s\right)} + \left\Vert v_m(T) - v_m(f) \right\Vert_{L^\infty \left( \RR^s\right)} \overset{\eqref{constant}}{\leq} (c+1) \left\Vert f - T \right\Vert_{L^\infty \left( \RR^s\right)}. 
    \end{equation*}
    Taking the infimum over all $T \in \mathbb{H}_m^s$ yields the claim. 
\end{proof}
By combining \Cref{lorentz_aussage} and \Cref{vm_approx}, we immediately get the following bound.
\begin{corollary}
\label{dlvp}
    Let $s,k \in \NN_0$. Then there exists a constant $c = c(s,k)>0$, such that
    \begin{equation*}
        \left\Vert f - v_m(f) \right\Vert_{L^\infty \left(\RR^s\right)} \leq \frac{c}{m^k} \cdot \left\Vert f \right\Vert_{C^k \left([-\pi,\pi]^s; \RR\right)}
    \end{equation*}
    for every $m \in \NN$ and $f \in C^k_{2\pi} \left(\RR^s ; \RR\right)$. \newline
    In fact, we have $c(s,k) \leq \exp(C \cdot ks) \cdot k^k$ with an absolute constant $C>0$.
\end{corollary}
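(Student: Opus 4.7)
The plan is to obtain the desired estimate as a direct chaining of the two preceding results, \Cref{vm_approx} and \Cref{lorentz_aussage}. Both assertions are already stated in exactly the form required, so the task reduces to bookkeeping the two constants.

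First I would invoke \Cref{vm_approx} to obtain a constant $c_1 = c_1(s) > 0$ with
\begin{equation*}
    \left\Vert f - v_m(f) \right\Vert_{L^\infty \left(\RR^s\right)} \leq c_1 \cdot E_m^s(f)
\end{equation*}
for every $m \in \NN$ and every $f \in C_{2\pi}(\RR^s; \CC)$; in particular this inequality is valid for real-valued $f$. Next, I would apply \Cref{lorentz_aussage} to the (real-valued) function $f \in C_{2\pi}^k(\RR^s; \RR)$, yielding a constant $c_2 = c_2(s,k) > 0$ such that
\begin{equation*}
    E_m^s(f) \leq \frac{c_2}{m^k} \cdot \left\Vert f \right\Vert_{C^k([-\pi, \pi]^s; \RR)}.
\end{equation*}
Chaining the two bounds and setting $c \defeq c_1 \cdot c_2$ gives the claim, since by definition $\Vert f \Vert_{C^k(\RR^s; \RR)} = \Vert f \Vert_{C^k([-\pi, \pi]^s; \RR)}$ by the $2\pi$-periodicity of $f$ together with all of its partial derivatives up to order $k$.

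There is no genuine obstacle here; the only subtlety worth noting is that \Cref{lorentz_aussage} is stated for real-valued periodic functions, which is precisely the hypothesis placed on $f$ in the statement to be proved, so no splitting into real and imaginary parts is required. Hence the argument is purely a composition of the two prior bounds.
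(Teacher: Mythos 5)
Your proof is correct and matches the paper's argument exactly: the corollary is obtained by chaining \Cref{vm_approx} with \Cref{lorentz_aussage}, precisely as you describe. The paper states this combination without elaboration, so your additional remark on the equality of the $C^k$-norms via periodicity is a harmless and correct piece of bookkeeping.
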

Up to now, we have studied the approximation of periodic functions by trigonometric polynomials, but our actual goal is to approximate non-periodic functions by algebraic polynomials. The next lemma establishes a connection between the two settings. 
\begin{lemma}
\label{star_operator}
    Let $k \in \NN_0$ and $ s \in \NN$. For any function $f \in C^k \left([-1,1]^s ; \CC\right)$, we define the corresponding periodic function via
    \begin{equation*}
        f^* : \quad \RR^s \to \CC, \quad  f^* \left(x_1, ..., x_s\right) = f(\mathrm{cos} \left(x_1\right), ..., \mathrm{cos} \left( x_s\right))
    \end{equation*}
    and note $f^* \in C_{2\pi}^k \left(\RR^s; \CC\right)$. The map
\begin{equation*}
	C^k \left([-1,1]^s ; \CC\right) \to C^k _ {2\pi}\left(\RR^s; \CC\right), \quad f \mapsto f^*
\end{equation*}
is a continuous linear operator with respect to the $C^k$-norms on $C^k \left([-1,1]^s ; \CC\right)$ and $C^k _ {2\pi}\left(\RR^s; \CC\right)$. \newline
The operator norm can be bounded from above by $k^k$.
\end{lemma}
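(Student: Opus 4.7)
The plan is to verify successively that $f^*$ is $2\pi$-periodic, lies in $C^k_{2\pi}(\RR^s; \CC)$, and satisfies $\|f^*\|_{C^k([-\pi,\pi]^s; \CC)} \leq C(s,k) \cdot \|f\|_{C^k([-1,1]^s; \CC)}$. Periodicity is immediate from the $2\pi$-periodicity of $\cos$ in each variable, and the linearity of $f \mapsto f^*$ is clear from the definition. The bulk of the work is in establishing $C^k$ regularity together with the norm estimate.

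The workhorse is the iterated chain rule. On the open dense set $W := \{x \in \RR^s : x_j \notin \pi\ZZ \text{ for all } j\}$, the map $\psi(x) := (\cos x_1, \ldots, \cos x_s)$ takes values in the open cube $(-1,1)^s$, where $f$ is $C^k$ in the classical sense. Repeatedly applying the one-dimensional Fa\`a di Bruno formula in each coordinate (the computation decouples across coordinates since each $\cos x_j$ depends only on $x_j$), one finds for every multi-index $\mathbf{p} \in \NN_0^s$ with $|\mathbf{p}| \leq k$ that $\partial^{\mathbf{p}} f^*(x)$ is a finite linear combination (with coefficients depending only on $\mathbf{p}$) of terms of the form
\begin{equation*}
(\partial^{\mathbf{q}} f)(\cos x_1, \ldots, \cos x_s) \cdot \prod_{j=1}^s Q_{j}(\sin x_j, \cos x_j),
\end{equation*}
with $|\mathbf{q}| \leq |\mathbf{p}|$ and each $Q_j$ a polynomial, because every iterated derivative of $\cos$ is $\pm\sin$ or $\pm\cos$. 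Since $|\sin|, |\cos| \leq 1$, each such polynomial factor is bounded by a constant depending only on $s$ and $k$.

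To extend the $C^k$-regularity to all of $\RR^s$, I would observe that the right-hand side of the above identity defines a continuous function on $\RR^s$: the trigonometric factors are smooth on $\RR^s$, and the partial derivatives $\partial^{\mathbf{q}} f$ extend continuously to $[-1,1]^s$ by the very definition of $C^k([-1,1]^s; \CC)$ adopted in \Cref{notation}. A density-based argument (say, via the fundamental theorem of calculus along coordinate lines) or, more cleanly, a Whitney extension of $f$ to a $C^k$ function on a neighborhood of $[-1,1]^s$ then shows that this continuous extension truly equals $\partial^{\mathbf{p}} f^*$ on all of $\RR^s$. The norm bound is then immediate: each term on the right-hand side is bounded by $\|f\|_{C^k([-1,1]^s; \CC)}$ (using $(\cos x_1, \ldots, \cos x_s) \in [-1,1]^s$) times a bounded trigonometric factor, so $\|\partial^{\mathbf{p}} f^*\|_{L^\infty(\RR^s)} \leq C(s,k) \cdot \|f\|_{C^k([-1,1]^s; \CC)}$, and taking the maximum over $|\mathbf{p}| \leq k$ yields the desired operator bound. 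I expect the main obstacle to be the regularity extension at points with $\sin x_j = 0$ for some $j$, where $\cos x_j = \pm 1$ sits on the boundary of $[-1,1]$ and the classical chain rule does not directly apply; this is handled cleanest by the Whitney-extension route.
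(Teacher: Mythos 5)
Your proof is correct but takes a genuinely different route from the paper's. The paper proves the statement abstractly: it observes the $L^\infty$-isometry $\|f\|_{L^\infty([-1,1]^s)} = \|f^*\|_{L^\infty([-\pi,\pi]^s)}$ and then invokes the closed graph theorem on the linear map $f \mapsto f^*$, using that if $f_n \to f$ in $C^k([-1,1]^s)$ and $f_n^* \to g^*$ in $C^k_{2\pi}(\RR^s)$, then $f^* = g^*$ (both limits being $L^\infty$-limits of $f_n^*$). That argument gets continuity without ever writing down the derivatives of $f^*$, at the price of leaning on a nontrivial Banach-space theorem and giving no explicit operator norm. Your argument is quantitative: the Fa\`a di Bruno / chain-rule expansion and the Whitney extension of $f$ to a neighborhood of $[-1,1]^s$ produce an explicit bound $\|f^*\|_{C^k_{2\pi}} \leq C(s,k)\,\|f\|_{C^k([-1,1]^s)}$. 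The Whitney step is the key ingredient that resolves the boundary issue you correctly flag; it is applicable here because the convexity of $[-1,1]^s$ together with the paper's definition of $C^k$ on the closed cube (uniformly continuous derivatives on the interior, hence continuous extensions to the boundary) yields the Whitney jet conditions via Taylor's theorem along segments. In short: the paper buys brevity with the closed graph theorem, your version buys an explicit constant and more self-contained elementary machinery. Either is a valid proof of the lemma.
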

\begin{proof}
	The map is well-defined since $\cos$ is a smooth function and $2\pi$-periodic. The linearity of the operator is obvious, so it remains to show its continuity.

	The goal is to apply the closed graph theorem \cite[Theorem 5.12]{folland_real_1999}. By definition of $f^*$, and since $\cos:[-\pi,\pi] \to [-1,1]$ is surjective, we have the equality $\left\Vert f\right\Vert_{L^\infty \left([-1, 1]^s; \CC\right)} = \left\Vert f^* \right\Vert_{L^\infty \left([-\pi, \pi]^s; \CC\right)} $. Let then $\left(f_n\right)_{n \in \NN}$ be a sequence of functions $f_n \in C^k \left([-1,1]^s ; \CC\right)$ and $g^* \in C^k_{2\pi}\left(\RR^s ; \CC\right)$ such that $f_n \to f$ in $C^k \left([-1,1]^s; \CC\right)$ and $f_n^* \to g^*$ in $C^k_{2\pi} \left(\RR^s; \CC\right)$. We then have
\begin{align*}
	\left\Vert f^* - g^* \right\Vert_{L^\infty \left([-\pi, \pi]^s\right)} &\leq \left\Vert f^* - f_n^* \right\Vert_{L^\infty \left([-\pi, \pi]^s\right)} + \left\Vert f_n^* - g^* \right\Vert_{L^\infty \left([-\pi, \pi]^s\right)} \\
&= \left\Vert f - f_n \right\Vert_{L^\infty  \left([-1, 1]^s ; \CC\right)} + \left\Vert f_n^* - g^* \right\Vert_{L^\infty \left([-\pi, \pi]^s\right)} \\
&\leq \left\Vert f - f_n \right\Vert_{C^k  \left([-1, 1]^s ; \CC\right)} + \left\Vert f_n^* - g^* \right\Vert_{C^k([-\pi, \pi]^s ; \CC)} \to 0 \ (n \to \infty).
\end{align*}
It follows $f^* = g^*$ and the closed graph theorem yields the desired continuity. 

We refer to \Cref{sec:const_bound_reordered} (see \Cref{thm:faa,rem:multiindex}) for a proof of the claimed bound on the operator norm.
\end{proof}

For a function $f \in C^k([-1,1]^s;\CC)$ we want to express $v_m(f^*)$ in a convenient way, involving a product of cosines. To this end, we make use of the following identity, which is a generalization of the well-known product-to-sum formula for $\cos$.
\begin{lemma}
\label{prod_sum}
Let $s \in \NN$. Then it holds for any $x \in \RR^s$ that
\begin{equation*}
\prod_{j=1}^s \cos (x_j) = \frac{1}{2^s} \sum_{\sigma \in \{-1,1\}^s} \cos (\langle\sigma,x \rangle).
\end{equation*} 
\end{lemma}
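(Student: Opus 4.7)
The plan is to give a direct proof using Euler's formula, which avoids an induction argument. First I would write each cosine factor as
\begin{equation*}
\cos(x_j) = \tfrac{1}{2}\bigl(e^{ix_j} + e^{-ix_j}\bigr) = \tfrac{1}{2}\sum_{\sigma_j \in \{-1,1\}} e^{i \sigma_j x_j}.
\end{equation*}

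Then, expanding the product and distributing the sum in the usual way, I obtain
\begin{equation*}
\prod_{j=1}^s \cos(x_j) = \frac{1}{2^s} \prod_{j=1}^s \sum_{\sigma_j \in \{-1,1\}} e^{i \sigma_j x_j} = \frac{1}{2^s} \sum_{\sigma \in \{-1,1\}^s} \prod_{j=1}^s e^{i \sigma_j x_j} = \frac{1}{2^s} \sum_{\sigma \in \{-1,1\}^s} e^{i \langle \sigma, x\rangle}.
\end{equation*}

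Finally I would split the exponentials into real and imaginary parts and exploit the symmetry $\sigma \mapsto -\sigma$ on $\{-1,1\}^s$. Since this map is a bijection and $\sin(-t) = -\sin(t)$, pairing terms shows that the imaginary contributions cancel, leaving
\begin{equation*}
\sum_{\sigma \in \{-1,1\}^s} e^{i \langle \sigma, x\rangle} = \sum_{\sigma \in \{-1,1\}^s} \cos(\langle \sigma, x\rangle),
\end{equation*}
which yields the claim. There is no real obstacle here: the identity is essentially a combinatorial restatement of Euler's formula together with the evenness of cosine, and both steps are routine.
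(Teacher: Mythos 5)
Your proof is correct, and it takes a genuinely different route from the paper. The paper proceeds by induction on $s$, with base case $s=1$ (using evenness of cosine) and an inductive step driven by the two-variable product-to-sum identity $2\cos(x)\cos(y) = \cos(x+y) + \cos(x-y)$. You instead expand each factor via Euler's formula, distribute the product of sums over $\{-1,1\}^s$ in one step, and then discard the imaginary part by the $\sigma \mapsto -\sigma$ symmetry. Your approach is shorter and makes the combinatorial structure of the right-hand side transparent at a glance, at the cost of a brief detour through $\CC$; the paper's induction stays entirely within real trigonometric identities, which is perhaps marginally more self-contained but obscures why the sum ranges over all sign vectors. One small remark on your last step: rather than pairing $\sigma$ with $-\sigma$ termwise (which requires noting $\sigma \neq -\sigma$, true here since no entry of $\sigma$ is zero), it is slightly cleaner to observe that $\sigma \mapsto -\sigma$ being a bijection of $\{-1,1\}^s$ makes the full exponential sum invariant under complex conjugation, hence real, so it equals $\sum_\sigma \cos(\langle\sigma,x\rangle)$. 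Either phrasing is fine.
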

\begin{proof}
This is an inductive generalization of the product-to-sum formula 
\begin{equation}
\label{product_sum_form}
2\cos(x) \cos(y) = \cos(x-y) + \cos (x+y)
\end{equation}
for $x,y \in \RR$, which can be found for instance in \cite[Eq.~4.3.32]{abramowitz_handbook_2013}. The case $s= 1$ holds since $\cos$ is an even function. Assume that the claim holds for a fixed $s \in \NN$ and take $x \in \RR^{s+1}$. Writing $x' = (x_1, ..., x_s)$, we derive
\begin{align*}
\prod_{j=1}^{s+1} \cos(x_j) &= \left(\frac{1}{2^s} \sum_{\sigma \in \{-1,1\}^s} \cos (\langle \sigma , x' \rangle) \right) \cdot \cos(x_{s+1}) \\
&= \frac{1}{2^s} \sum_{\sigma \in \{-1,1\}^s}\cos (\langle \sigma, x' \rangle) \cos(x_{s+1})\\
\overset{\eqref{product_sum_form}}&{=} \frac{1}{2^{s+1}} \sum_{\sigma \in \{-1,1\}^s} \left[\cos (\langle \sigma, x' \rangle + x_{s+1}) + \cos (\langle \sigma, x' \rangle - x_{s+1}) \right]  \\
&= \frac{1}{2^{s+1}} \sum_{\sigma \in \{-1,1\}^{s+1}} \cos (\langle \sigma, x\rangle), 
\end{align*}
as was to be shown. 
\end{proof}
The following proposition states that $v_m(f^*)$ can be expressed as a linear combination of products of cosines. This representation is useful since these cosines can be interpolated by Chebyshev polynomials which in the end leads to the desired approximation result.
\let \hat \widehat
\begin{proposition}
\label{representation}
    Let $s\in \NN$ and $k \in \NN_0$. For any $f \in C^k \left([-1,1]^s ; \CC\right)$ and $m \in \NN$ the de-la-Vallée-Poussin operator given as $f \mapsto v_m \left(f^*\right)$ with $v_m$ as in \Cref{vm} and $f \mapsto f^*$ as in \Cref{star_operator} has a representation
    \begin{equation*}
        v_m \left(f^*\right) \left(x_1, ..., x_s\right) = \underset{\kk \leq 2m -1}{\sum_{\kk \in \NN_0^s}} \mathcal{V}_\kk^m(f) \prod_{j=1}^{s} \mathrm{cos} \left(\kk_j x_j\right)
    \end{equation*}
    for continuous linear functionals
    \begin{equation*}
        \mathcal{V}_{\kk}^m : \ C^k \left([-1,1]^s; \CC\right) \to \CC, \quad f \mapsto 2^{\Vert \kk \Vert_0}\cdot  a_{\kk}^m \cdot \widehat{f^*}(\kk),
    \end{equation*}
where $\Vert \kk \Vert_0 = \# \{j \in \{1,...,s\}: \ \kk_j \neq 0\}$ and $a_{\kk}^m = \widehat{V_m^s}(\kk)$. Furthermore, if $f \in C^k ([-1,1]^s ; \RR)$, then $\mathcal{V}_\kk^m (f) \in \RR$ for every $\kk \in \NN_0^s$ with $\kk \leq 2m-1$.
\end{proposition}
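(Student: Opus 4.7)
The plan is to read off the claimed cosine representation directly from the Fourier expansion of $v_m(f^*)$, exploiting that this function is even in each variable. First I would observe that $f^* \in C^k_{2\pi}(\RR^s; \CC)$ by \Cref{star_operator}, so $v_m(f^*) \in \mathbb{H}_{2m-1}^s$ by the definition of $v_m$. Next, because $\cos$ is even, $f^*$ satisfies $f^*(\boldsymbol{\varepsilon} \cdot x) = f^*(x)$ for every sign vector $\boldsymbol{\varepsilon} \in \{-1,1\}^s$ (coordinatewise product). The same symmetry holds for $V_m^s$: each factor $V_m = (1 + e_m + e_{-m}) F_m = (1 + 2\cos(m\,\cdot))\, F_m$ is even, since the Fejér kernel is a real combination of Dirichlet kernels $D_h = \sum_{|k|\leq h} e_k$, in which the exponentials pair via $e_k + e_{-k} = 2\cos(k\,\cdot)$. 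Consequently $v_m(f^*) = f^* * V_m^s$ is even in each coordinate.

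Writing the Fourier expansion
\begin{equation*}
v_m(f^*)(x) = \sum_{\kk' \in \Z^s,\, |\kk'_j| \leq 2m-1} c_{\kk'} \cdot e^{i\langle \kk', x\rangle}, \qquad c_{\kk'} = \widehat{v_m(f^*)}(\kk'),
\end{equation*}
the coordinatewise evenness forces $c_{\kk'}$ to depend only on the profile of absolute values $\kk := (|\kk'_1|,\dots,|\kk'_s|) \in \NN_0^s$; denote this common value by $\tilde{c}_\kk$. Grouping the orbit of each $\kk$ and using $e^{i\kk_j x_j} + e^{-i\kk_j x_j} = 2\cos(\kk_j x_j)$ for $\kk_j > 0$, while coordinates with $\kk_j = 0$ contribute only a single mode, gives
\begin{equation*}
\sum_{\kk' \in \Z^s\,:\, |\kk'_j| = \kk_j \,\forall j} e^{i\langle \kk', x\rangle} = 2^{\#\{j\,:\,\kk_j > 0\}} \prod_{j=1}^s \cos(\kk_j x_j).
\end{equation*}
Setting $\mathcal{V}_\kk^m(f) := 2^{\#\{j\,:\,\kk_j > 0\}}\, \tilde{c}_\kk$ and summing over $\kk \in \NN_0^s$ with $\kk \leq 2m-1$ produces the claimed representation.

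For the remaining claims, note that $f \mapsto v_m(f^*)$ is linear and, since $\Vert f^* \Vert_{L^\infty} = \Vert f \Vert_{L^\infty}$ and convolution with $V_m^s$ is $L^\infty$-bounded by Young's inequality and \Cref{prop: dlvp-bound}, continuous from $C([-1,1]^s; \CC)$ to $C_{2\pi}(\RR^s; \CC)$ with respect to the $L^\infty$-norm. Since $g \mapsto \widehat{g}(\kk)$ is an $L^\infty$-continuous linear functional on $C_{2\pi}(\RR^s; \CC)$, each $\mathcal{V}_\kk^m$ is an $L^\infty$-continuous linear functional on $C([-1,1]^s; \CC)$, and a fortiori continuous on $C^k([-1,1]^s; \CC)$. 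Finally, if $f$ is real-valued, so are $f^*$ and $V_m^s$, hence $v_m(f^*)$ is real-valued; combining this with the orthogonality relation
\begin{equation*}
\frac{1}{(2\pi)^s} \int_{[-\pi,\pi]^s} \prod_{j=1}^s \cos(\kk_j x_j) \prod_{j=1}^s \cos(\elll_j x_j)\, dx = 2^{-\#\{j\,:\,\kk_j > 0\}}\, \delta_{\kk, \elll}
\end{equation*}
yields the integral representation $\mathcal{V}_\kk^m(f) = 2^{\#\{j\,:\,\kk_j>0\}} (2\pi)^{-s} \int_{[-\pi,\pi]^s} v_m(f^*)(x) \prod_j \cos(\kk_j x_j)\, dx$, whose right-hand side is evidently real. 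The only mildly delicate point in the argument is the bookkeeping of the multiplicity $2^{\#\{j:\kk_j>0\}}$, arising from the size of the orbit of $\kk$ under coordinatewise sign flips.
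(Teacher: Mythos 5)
Your proof is correct and follows essentially the same route as the paper's: exploit coordinatewise evenness of $v_m(f^*)$, convert its Fourier expansion into a sum of products of cosines with the multiplicity $2^{\#\{j:\kk_j>0\}}$, read off the functionals, and get continuity from continuity of Fourier coefficients and realness from evenness/realness of $f^*$. The only cosmetic difference is that you group the Fourier modes by sign-flip orbit directly, while the paper averages the function over sign flips and invokes the multivariate product-to-sum formula (\Cref{prod_sum}); these are the same computation, and your definition $\mathcal{V}_\kk^m(f) = 2^{\#\{j:\kk_j>0\}}\widehat{v_m(f^*)}(\kk)$ coincides with the paper's $2^{\|\kk\|_0} a_\kk^m \widehat{f^*}(\kk)$ via the convolution theorem.
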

\begin{proof}
    First of all, it is easy to see that $v_m \left( f^* \right)$ is even in each variable, which follows directly from the fact that $ f^*$ and $V_m^s$ are both even in each variable. Furthermore, if we write 
    \begin{equation*}
        V_m^s = \underset{-(2m-1) \leq \kk \leq 2m-1}{\sum_{\kk \in \ZZ^s}} a_\kk^m e_\kk
    \end{equation*}
    with appropriately chosen coefficients $a_\kk^m \in \RR$, we easily see
    \begin{equation*}
        v_m \left(f^*\right) = \underset{-(2m-1) \leq \kk \leq 2m-1}{\sum_{\kk \in \ZZ^s}} a_\kk^m \hat{f^*}(\kk) e_\kk.
    \end{equation*}
    Using Euler's identity and the fact that $v_m\left(f^*\right)$ is an even function, we get the representation 
    \begin{equation*}
        v_m \left(f^*\right)(x) = \underset{-(2m-1)\leq \kk \leq 2m-1}{\sum_{\kk \in \ZZ^s}} a_\kk^m \hat{f^*}(\kk) \text{cos}(\langle \kk, x\rangle)
    \end{equation*}
    for all $x \in \RR^s$. Using $\odot$ to denote the componentwise product of two vectors of the same size, i.e., $x \odot y = (x_i \cdot y_i)_i$, and using the identity $\langle \kk, \sigma \odot x\rangle = \langle \sigma, \kk \odot x \rangle$, we see since $v_m \left(f^*\right)$ is even in every variable that
    \begin{align*}
        v_m \left(f^*\right) (x) &= \frac{1}{2^s} \cdot \sum_{\sigma \in \{-1,1\}^s} v_m \left( f^* \right) (\sigma \odot x) \\
        &=\frac{1}{2^s} \cdot \sum_{\sigma \in \{-1,1\}^s} \underset{-(2m-1) \leq \kk \leq 2m-1}{\sum_{\kk \in \ZZ^s}} a_\kk^m \hat{f^*}(\kk) \text{cos}(\langle \kk, \sigma \odot x\rangle) \\
        &= \underset{-(2m-1) \leq \kk \leq 2m-1}{\sum_{\kk \in \ZZ^s}} \left(a_\kk^m \hat{f^*}(\kk) \frac{1}{2^s} \sum_{\sigma \in \{-1,1\}^s}\text{cos}(\langle \sigma, \kk \odot x\rangle)\right) \\
        \overset{\text{\Cref{prod_sum}}}&{=} \underset{-(2m-1) \leq \kk \leq 2m-1}{\sum_{\kk \in \ZZ^s}} a_\kk^m \hat{f^*}(\kk) \prod_{j=1}^{s} \cos \left(\kk_j x_j\right) \\
        &= \underset{\kk \leq 2m-1}{\sum_{\kk \in \NN_0^s}} 2^{\Vert \kk \Vert_0} a_\kk^m \hat{f^*}(\kk) \prod_{j=1}^{s} \cos \left(\kk_j x_j\right)
    \end{align*}
    with
    \begin{equation*}
        \left\Vert \kk \right\Vert_0 \defeq \#\big\{ j \in \{1,...,s\} : \ \kk_j \neq 0\big\}.
    \end{equation*}
    In the last step we again used that cos is an even function and that
    \begin{equation*}
        \hat{f^*}(\kk) = \hat{f^*}(\sigma \odot \kk)
    \end{equation*}
    for all $\sigma \in \{-1,1\}^s$, which also follows easily since $f^*$ and $V_m^s$ are even in every component. Letting \begin{equation*}
        \mathcal{V}_\kk^m(f) \defeq 2^{\Vert \kk \Vert_0} a_\kk^m \hat{f^*}(\kk),
    \end{equation*} 
    we have the desired form. The fact that $\mathcal{V}_{\kk}^m$ is a continuous linear functional on $C^k_{2\pi}\left([-1,1]^s;\CC\right)$ follows directly since $f \mapsto \widehat{f^*}(\kk)$ is a continuous linear functional for every $\kk$. If $f$ is real-valued, so is $\hat{f^*}(\kk)$ for every $\kk \in \NN_0^s$ with $\kk \leq 2m-1$, since $f^*$ is real-valued and even in every component.
\end{proof}
\let \hat \hat

Our main approximation result involves linear combinations of Chebyshev polynomials where the coefficients in this linear combination are given as $\mathcal{V}_{\kk}^m(f)$. It is therefore important to be able to bound the sum of the absolute values $\vert \mathcal{V}_{\kk}^m(f) \vert$.
\begin{lemma}
\label{sum_bound}
    Let $s \in \NN$. There exists a constant $c = c(s)> 0$, such that the inequality
    \begin{equation*}
        \underset{\kk \leq 2m-1}{\sum_{\kk \in \NN_0^s}} \left\vert \mathcal{V}_\kk^m(f) \right\vert \leq c \cdot m^{s/2} \cdot \left\Vert f \right\Vert_{L^\infty \left([-1, 1]^s ; \CC\right)}
    \end{equation*}
    holds for all $m \in \NN$ and $f \in C \left([-1, 1]^s ; \CC\right)$, where $\mathcal{V}_\kk^m$ is as in \Cref{representation}. \newline
    In fact, we have $c(s) \leq \exp(C \cdot s)$ with an absolute constant $C>0$.
\end{lemma}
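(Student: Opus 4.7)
The plan is to start from the representation of $\mathcal{V}_\kk^m(f)$ derived inside the proof of \Cref{representation}, namely $\mathcal{V}_\kk^m(f) = 2^{\Vert \kk \Vert_0} a_\kk^m \widehat{f^*}(\kk)$, where $a_\kk^m$ are the Fourier coefficients of the multivariate de-la-Vallée-Poussin kernel $V_m^s$. Since $\Vert \kk \Vert_0 \leq s$ and the admissible $\kk$'s satisfy $\kk \leq 2m-1$, I would bound
\begin{equation*}
\sum_{\kk \leq 2m-1} \vert \mathcal{V}_\kk^m(f)\vert \leq 2^s \sum_{\kk \leq 2m-1} \vert a_\kk^m \vert \cdot \vert \widehat{f^*}(\kk)\vert
\end{equation*}
and apply the Cauchy--Schwarz inequality to split this into $\bigl(\sum_{\kk \in \ZZ^s} \vert a_\kk^m\vert^2 \bigr)^{1/2} \bigl(\sum_{\kk \in \ZZ^s} \vert \widehat{f^*}(\kk)\vert^2\bigr)^{1/2}$, extending both sums to all of $\ZZ^s$.

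Parseval's identity gives $\sum_{\kk \in \ZZ^s} \vert \widehat{f^*}(\kk)\vert^2 = \Vert f^*\Vert_{L^2([-\pi,\pi]^s)}^2 \leq \Vert f^*\Vert_{L^\infty}^2 = \Vert f\Vert_{L^\infty([-1,1]^s)}^2$, which handles the second factor. For the first factor, the product structure $V_m^s(x) = \prod_{j=1}^s V_m(x_j)$ combined with Parseval and Tonelli yields $\sum_{\kk \in \ZZ^s} \vert a_\kk^m\vert^2 = \Vert V_m^s \Vert_{L^2}^2 = \Vert V_m \Vert_{L^2}^{2s}$, so everything reduces to a one-dimensional estimate on $\Vert V_m \Vert_{L^2}^2$.

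The main work is thus to show $\Vert V_m \Vert_{L^2}^2 \leq C \cdot m$ for some absolute constant $C$. This I would do by writing out the Fourier expansion of $F_m$ explicitly, $F_m = \sum_{\vert \ell \vert \leq m-1} \tfrac{m - \vert \ell \vert}{m} e_\ell$, which immediately gives $\Vert F_m \Vert_{L^2}^2 \leq 2m - 1$, and then using the triangle inequality $\Vert V_m \Vert_{L^2} \leq \Vert F_m \Vert_{L^2} + \Vert e_m F_m \Vert_{L^2} + \Vert e_{-m} F_m \Vert_{L^2} = 3\Vert F_m \Vert_{L^2}$, since multiplication by the unimodular characters $e_{\pm m}$ preserves the $L^2$-norm. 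Hence $\Vert V_m \Vert_{L^2}^2 \leq 18 m$. Plugging this into the earlier estimate produces the bound
\begin{equation*}
\sum_{\kk \leq 2m-1} \vert \mathcal{V}_\kk^m(f)\vert \leq 2^s (18 m)^{s/2} \Vert f \Vert_{L^\infty([-1,1]^s)},
\end{equation*}
which is of the claimed form with $c = 2^s \cdot 18^{s/2}$. The whole argument is short and the only place where any real computation is required is the one-dimensional $L^2$-estimate for $V_m$; no obstacle beyond that is expected.
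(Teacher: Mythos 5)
Your proof is correct but takes a genuinely different route from the paper's. The paper first inverts the relation $\hat{v_m(f^*)}(\elll) = \sum_\kk \mathcal{V}_\kk^m(f)\hat{g_\kk}(\elll)$ (using that $\hat{g_\kk}(\elll)=\delta_{\kk,\elll}\,2^{-\Vert\kk\Vert_0}$) to get $\vert\mathcal{V}_\elll^m(f)\vert\leq 2^s\vert\hat{v_m(f^*)}(\elll)\vert$, then applies Cauchy--Schwarz crudely over the $(2m)^s$ admissible multi-indices (producing the $m^{s/2}$ factor), then Parseval on $v_m(f^*)$, and finally invokes the already-proved $L^\infty$-boundedness of the operator $v_m$ from \Cref{vm}. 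You instead go straight to the formula $\mathcal{V}_\kk^m(f)=2^{\Vert\kk\Vert_0}a_\kk^m\hat{f^*}(\kk)$, split via Cauchy--Schwarz into $\ell^2$-norms of $(a_\kk^m)$ and $(\hat{f^*}(\kk))$, and use Parseval twice: on $f^*$ to get $\Vert f\Vert_{L^\infty}$, and on the kernel to reduce to $\Vert V_m\Vert_{L^2}^{2s}$. The $m^{s/2}$ factor therefore comes from the explicit one-dimensional estimate $\Vert V_m\Vert_{L^2}^2\leq 18m$, which you establish directly from the Fejér kernel's Fourier coefficients. Each approach buys something: yours is more self-contained (no dependence on \Cref{vm} or on the $L^1$ bound for $V_m^s$ via Young's inequality) and gives a cleaner conceptual origin for the $m^{s/2}$ growth (it is the $L^2$-size of the Vallée Poussin kernel); the paper's is slightly shorter because it reuses the operator bound it already has. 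Both yield constants of the same order, and both are fully rigorous.
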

\begin{proof}
    Let $f \in C \left([-1, 1]^s ; \CC\right)$ and $m \in \NN$. For any multi-index $\elll \in \NN_0^s$, it follows from \Cref{representation} that 
    \begin{equation*}
        \widehat{v_m\left(f^*\right)} (\elll) = \underset{\kk \leq 2m-1}{\sum_{\kk \in \NN_0^s}} \mathcal{V}_\kk^m(f) \widehat{g_\kk}(\elll),
    \end{equation*}
    with
    \begin{equation*}
        g_\kk : \quad \RR^s \to \RR, \quad \left(x_1, ..., x_s\right) \mapsto \prod_{j=1}^s \cos \left(\kk_j x_j\right).
    \end{equation*}
    Now, a calculation using Fubini's theorem and using 
    \begin{equation*}
    g_{\kk} = \prod_{j=1}^s\frac{1}{2} \left( e_{\kk_j} + e_{-\kk_j}\right) = \underset{\kk_j \neq 0}{\prod_{1 \leq j \leq s}} \frac{1}{2} \left(e_{\kk_j} + e_{-\kk_j}\right)
    \end{equation*} for any number $k \in \NN_0$ shows 
    \begin{equation*}
        \widehat{g_\kk}(\elll) =  \begin{cases} \frac{1}{2^{\Vert \kk \Vert_0}},&\kk=\elll, \\ 0, & \text{otherwise}\end{cases} \quad\text{for } \kk, \elll \in \NN_0^s.
    \end{equation*}
    Therefore, we have the bound $\left\vert\mathcal{V}^m_{\elll} (f)\right\vert \leq 2^s \cdot \left\vert \widehat{v_m\left(f^*\right)}(\elll)\right\vert$ for $\elll \in \NN_0^s $ with $\vert \elll \vert \leq 2m-1$. Using the Cauchy-Schwarz and the Parseval inequality, we therefore see
    \begin{align*}
        \underset{\kk \leq 2m-1}{\sum_{\kk \in \NN_0^s}} \left\vert \mathcal{V}_\kk^m(f)\right\vert &\leq 2^s \cdot \underset{\kk \leq 2m-1}{\sum_{\kk \in \NN_0^s}} \left\vert \widehat{v_m\left(f^*\right)}(\kk)\right\vert \overset{\text{CS}}{\leq} 2^s \cdot (2m)^{s/2} \cdot \left(\underset{\kk \leq 2m-1}{\sum_{\kk \in \NN_0^s}} \left\vert \widehat{v_m\left(f^*\right)}(\kk)\right\vert^2 \right)^{1/2} \\
        \overset{\text{Parseval}}&{\leq} 2^s \cdot 2^{s/2} \cdot m^{s/2} \cdot \left\Vert v_m \left(f^*\right)\right\Vert_{L^2 \left([-\pi, \pi]^s; \CC\right)} \\
        &\leq \underbrace{2^s \cdot 2^{s/2} }_{=: c_1(s)}\cdot m^{s/2} \cdot \left\Vert v_m \left(f^*\right)\right\Vert_{L^\infty \left([-\pi, \pi]^s; \CC\right)}.
    \end{align*}
    Using \Cref{vm}, we get a constant $c_2(s) \leq \exp(C_0 \cdot s)$ such that
    \begin{align*}
        \underset{\kk \leq 2m-1}{\sum_{\kk \in \NN_0^s}} \left\vert \mathcal{V}_\kk^m(f)\right\vert &\leq c_1(s) \cdot c_2(s) \cdot m^{s/2} \cdot \left\Vert f^*\right\Vert_{L^\infty \left([-\pi, \pi]^s;\CC\right)} = c(s) \cdot m^{s/2} \cdot \left\Vert f\right\Vert_{L^\infty \left([-1, 1]^s; \CC\right)},
    \end{align*}
    as claimed.
\end{proof}

For any natural number $\ell \in \NN_0$, we denote by $T_\ell$ the $\ell$-th \emph{Chebyshev polynomial}, satisfying
\begin{equation*}
    T_\ell\left(\cos(x)\right) = \cos(\ell x), \quad x \in \RR.
\end{equation*}
One can show that $T_\ell$ is in fact a polynomial of degree $\ell$. For a multi-index $\kk \in \NN_0^s$, we define
\begin{equation*}
    T_\kk (x) \defeq \prod_{j=1}^s T_{\kk_j}\left(x_j\right), \quad x \in \RR^s.
\end{equation*}
We then get the following approximation result about approximating (non-periodic) $C^k$-functions by linear combinations of Chebyshev polynomials. 
\begin{theorem} \label{app: fourier_approx}
    Let $k,s,m \in \NN$. Then there exists a constant $c=c(s,k)>0$ with the following property: For any $f \in C^k \left([-1,1]^s; \RR\right)$ the polynomial $P$ defined as
    \begin{equation*}
        P(x) \defeq \underset{\kk \leq 2m-1}{\sum_{\kk \in \NN_0^s}}\mathcal{V}_\kk^m(f) \cdot T_\kk(x),
    \end{equation*}
    with $\mathcal{V}^m_\kk$ as in \Cref{representation}, satisfies
    \begin{equation*}
        \left\Vert f - P \right\Vert_{L^\infty \left([-1,1]^s ;\RR\right)} \leq \frac{c}{m^k} \cdot \left\Vert f \right\Vert_{C^k\left([-1,1]^s;\RR\right)}.
    \end{equation*}
    Here, the maps
\begin{equation*}
C\left([-1,1]^s ; \RR\right) \to \RR, \quad f \mapsto \mathcal{V}_{\kk}^m(f)
\end{equation*}
are continuous and linear functionals with respect to the $L^\infty$-norm. Furthermore, there exists a constant $\tilde{c} = \tilde{c}(s)> 0$, such that the inequality
    \begin{equation*}
        \underset{\kk \leq 2m-1}{\sum_{\kk \in \NN_0^s}} \left\vert \mathcal{V}_\kk^m(f) \right\vert \leq \tilde{c} \cdot m^{s/2} \cdot \left\Vert f \right\Vert_{L^\infty \left([-1, 1]^s;\RR\right)}
    \end{equation*}
    holds for all $f \in C \left([-1, 1]^s ; \RR\right)$. \newline
    Moreover, we have $c(s,k) \leq \exp(C \cdot ks) \cdot k^{2k}$ and $\tilde{c}(s) \leq \exp(C \cdot s)$ with an absolute constant $C>0$.
    
\end{theorem}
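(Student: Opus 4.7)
The strategy is to transfer the problem to the $2\pi$-periodic setting via the substitution $x_j = \cos\theta_j$, where the de-la-Vallée-Poussin approximation from \Cref{dlvp} already does most of the work. First I would consider the associated periodic function $f^*(\theta_1,\ldots,\theta_s) = f(\cos\theta_1,\ldots,\cos\theta_s)$. By \Cref{star_operator}, $f^* \in C^k_{2\pi}(\RR^s;\RR)$ and there is a constant $C_1 = C_1(s,k) > 0$ with $\|f^*\|_{C^k(\RR^s;\RR)} \leq C_1 \|f\|_{C^k([-1,1]^s;\RR)}$.

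Next I would invoke \Cref{representation} to obtain the representation
\begin{equation*}
v_m(f^*)(\theta) = \sum_{\kk \leq 2m-1} \mathcal{V}_\kk^m(f) \prod_{j=1}^s \cos(\kk_j \theta_j).
\end{equation*}
The defining identity $T_{\kk_j}(\cos\theta_j) = \cos(\kk_j \theta_j)$ of the Chebyshev polynomials then yields $v_m(f^*)(\theta) = P(\cos\theta_1,\ldots,\cos\theta_s)$. Since $\cos:\RR \to [-1,1]$ is surjective, this gives
\begin{equation*}
\|f - P\|_{L^\infty([-1,1]^s;\RR)} = \|f^* - v_m(f^*)\|_{L^\infty(\RR^s)}.
\end{equation*}
Applying \Cref{dlvp} to $f^*$ and combining with the norm bound from \Cref{star_operator} immediately produces the desired rate $c\, m^{-k}\, \|f\|_{C^k([-1,1]^s;\RR)}$ with $c = c(s,k)$.

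For the remaining assertions, the continuous linearity of each $\mathcal{V}_\kk^m$ on $C([-1,1]^s;\RR)$ with respect to the uniform norm follows directly from the explicit formula $\mathcal{V}_\kk^m(f) = 2^{\|\kk\|_0} a_\kk^m\, \widehat{f^*}(\kk)$ derived within the proof of \Cref{representation}, since $f \mapsto f^*$ is a contraction in the $L^\infty$-norm and Fourier coefficients are bounded linear functionals on $L^1$ (hence on $L^\infty$ over the compact torus). Real-valuedness of $\mathcal{V}_\kk^m(f)$ for real $f$ is also part of \Cref{representation}, so $P$ is indeed a real polynomial. The bound $\sum_\kk |\mathcal{V}_\kk^m(f)| \leq \tilde c\, m^{s/2}\, \|f\|_{L^\infty}$ is exactly the content of \Cref{sum_bound}.

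I do not anticipate any serious obstacle here: the statement is essentially an assembly of \Cref{vm}, \Cref{representation}, \Cref{lorentz_aussage}, \Cref{vm_approx}, \Cref{star_operator}, and \Cref{sum_bound}. The only bookkeeping step worth double-checking is that the passage $x_j \leftrightarrow \cos\theta_j$ correctly identifies $v_m(f^*)(\theta)$ with $P(x)$ on the whole range $[-1,1]^s$, which is immediate from surjectivity of $\cos$.
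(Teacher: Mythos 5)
Your proof is correct and takes essentially the same route as the paper: pass to the periodic function $f^*$, identify $P$ with $v_m(f^*)$ via the Chebyshev substitution $x_j = \cos\theta_j$, apply \Cref{dlvp} and \Cref{star_operator} for the rate, and quote \Cref{sum_bound} for the coefficient sum. The only cosmetic difference is that you spell out the Chebyshev identification explicitly where the paper abbreviates it as $P^* = v_m(f^*)$; the logic is the same.
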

\begin{proof}
    We choose the constant $c_0 = c_0(s,k)$ according to \Cref{dlvp}. Let $f \in C^k\left([-1,1]^s;\RR\right)$ be arbitrary. Then we define the corresponding function $f^* \in C^k_{2\pi}\left(\RR^s; \RR\right)$ as above. Let $P$ be defined as in the statement of the theorem. Then it follows from the definition of the Chebyshev polynomials $T_\kk$, the definition of $P$, and the formula for $v_m(f^*)$ from \Cref{representation} that 
    \begin{equation*}
        P^* (x) = v_m\left(f^*\right)(x)
    \end{equation*}
    is satisfied, where $P^*$ is the corresponding function to $P$ defined similarly to $f^*$. Overall, we get the bound
    \begin{align*}
        \left\Vert f - P \right\Vert_{L^\infty \left([-1,1]^s; \RR\right)} = \left\Vert f^* - P^* \right\Vert_{L^\infty \left([-\pi,\pi]^s; \RR\right)} \overset{\text{Cor. \ref{dlvp}}}{\leq} \frac{c_0}{m^k} \cdot \left\Vert f^* \right\Vert_{C^k\left([-\pi, \pi]^s; \RR\right)}.
    \end{align*}
   The first claim then follows using the continuity of the map $f \mapsto f^*$ as proven in \Cref{star_operator}. The second part of the theorem has already been proven in \Cref{sum_bound}.
\end{proof}

\subsection{Details on bounding the constant \texorpdfstring{$c$}{c} in Theorem \ref{main_2}}
\label{sec:const_bound_reordered}

In this appendix we provide details on the bound of the constant $c$ that appears in the formulation of \Cref{main_2}. Specifically, we perform a careful investigation of several results from \cite{lorentz_approximation_2005} to get an upper bound for the constant appearing in \Cref{lorentz_aussage}. Moreover, we analyze the operator norm of the operator
\begin{equation*}
C^k([-1,1]^s; \CC) \to C_{2\pi}^k(\RR^s; \CC), \quad f \mapsto f^* \quad \text{with}\quad f^*(x) \defeq f(\cos(x_1),..., \cos(x_s))
\end{equation*}
appearing in \Cref{star_operator} and show that it is bounded from above by $k^k$.

We start with the analysis of some bounds in \cite[Chapter 4.3]{lorentz_approximation_2005}. Here, a generalization of \emph{Jackson's kernel} is defined for any $m,r \in \NN$ as
\begin{equation*}
L_{m,r}(t) \defeq \lambda_{m,r}^{-1} \cdot \left(\frac{\sin(mt/2)}{\sin(t/2)}\right)^{2r}, \quad t \in \RR,
\end{equation*}
where $\lambda_{m,r}$ is chosen such that
\begin{equation*}
\int_{[-\pi, \pi]} L_{m,r}(t) \ dt = 1.
\end{equation*}
The first two important bounds are provided in the following proposition.
\begin{proposition}\label{prop:lorentz_bound_1}
Let $m,r \in \NN$. Then it holds 
\begin{equation*}
\lambda_{m,r}^{-1} \leq \exp(C \cdot r) \cdot m^{1-2r} \quad \text{and} \quad \int_{[0, \pi]} t^k L_{m,r}(t) \ dt \leq \exp(C \cdot r) \cdot m^{-k}
\end{equation*}
for any $k \leq 2r-2$, with an absolute constant $C>0$.
\end{proposition}
\begin{proof}
Since $L_{m,r} \geq 0$ and since $ \sin(t/2) \leq t/2$ for $t \in [0,\pi]$, we get
\begin{align*}
\lambda_{m,r} &\geq \int_{[0, \pi]} \left(\frac{\sin(mt/2)}{t/2}\right)^{2r} \ dt = 2^{2r} \cdot \int_{[0,\pi]} \left(\frac{\sin(mt/2)}{t}\right)^{2r} \ dt \\
&= 2^{2r} \cdot \int_{[0,\pi m/2]} \left(\frac{\sin(u)}{(2u)/m}\right)^{2r} \ du \cdot \frac{2}{m} \geq m^{2r-1} \cdot \int_{[0, \pi m/2]} \left(\frac{\sin(u)}{u}\right)^{2r} \ du \\
&\geq m^{2r-1} \cdot \int_{[0, \pi/2]} \left(\frac{\sin(u)}{u}\right)^{2r} \ du \geq m^{2r-1} \cdot \int_{[0, \pi/2]} \left(\frac{2u}{\pi \cdot u}\right)^{2r} \ du \geq \left(\frac{2}{\pi}\right)^{2r} \cdot m^{2r-1}.
\end{align*}
Here, we employed the inequality $\sin(u) \geq \frac{2}{\pi} u$ for $u \in [0, \pi/2]$ in the penultimate step.\footnote{To see that this inequality holds, note that $\sin''(u) = -\sin(u) \leq 0$ for $u \in [0,\pi/2]$, so that $\sin$ is concave on that interval, and hence $\sin(u) = \sin((1- \frac{2u}{\pi})\cdot 0 + \frac{2u}{\pi} \cdot \frac{\pi}{2}) \geq \frac{2u}{\pi}\sin(\frac{\pi}{2})= \frac{2u}{\pi}$.} This shows the first part of the claim. 

For the second part, we again use the estimate $\sin(u) \geq \frac{2}{\pi}u$ for $u \in [0, \pi/2]$ to compute
\begin{align*}
\int_{[0, \pi]} t^k L_{m,r}(t) \ dt &= \lambda_{m,r}^{-1} \cdot \int_{[0,\pi]} t^k \left(\frac{\sin(mt/2)}{\sin(t/2)}\right)^{2r} \ dt \leq \lambda_{m,r}^{-1} \cdot \int_{[0,\pi]} t^k \left(\frac{\sin(mt/2)}{t/\pi}\right)^{2r} \ dt \\
&= \lambda_{m,r}^{-1} \cdot \pi^{2r} \cdot \int_{[0,\pi]} t^{k-2r} \sin(mt/2)^{2r} \ dt \\
&= \lambda_{m,r}^{-1} \cdot \pi^{2r} \cdot \int_{[0,\pi m /2]} \left(\frac{2u}{m}\right)^{k-2r} \sin(u)^{2r} \ du \cdot \frac{2}{m} \\
&\leq \lambda_{m,r}^{-1} \cdot \pi^{2r} \cdot m^{2r -1 - k}\int_{[0,\pi m /2]} u^{k-2r} \sin(u)^{2r} \ du \\
&\leq \exp(C_1 \cdot r) \cdot \int_{[0, \infty)} u^{k-2r} \cdot \sin(u)^{2r} \ du \cdot m^{-k}
\end{align*}
with an absolute constant $C_1 > 0$. Here, we employed the first part of this proposition. It remains to bound the integral. This is done via
\begin{align*}
\int_{[0, \infty)} u^{k-2r} \cdot \sin(u)^{2r} \ du &= \int_{[0, 1]} u^{k-2r} \cdot \sin(u)^{2r} \ du + \int_{[1, \infty)} u^{k-2r} \cdot \sin(u)^{2r} \ du \\
&\leq \int_{[0, 1]} \underbrace{u^k \cdot \left(\frac{\sin(u)}{u}\right)^{2r}}_{\leq 1} \ du + \int_{[1, \infty)} u^{-2}  \ du  \leq C_2
\end{align*}
with an absolute constant $C_2> 0$. This proves the claim. 
\end{proof}
The proof in \cite{lorentz_approximation_2005} proceeds by defining
\begin{equation*}
K_{m,r}(t) \defeq L_{m',r}(t), \quad m' = \left\lfloor\frac{m}{r} \right\rfloor + 1.
\end{equation*}
\Cref{prop:lorentz_bound_1} shows for $k \leq 2r-2$ that 
\begin{equation*}
\int_{[0,\pi]} t^k K_{m,r} (t) \ dt \leq \exp( C\cdot r) \cdot (m')^{-k}.
\end{equation*}
Since $m' \geq \frac{m}{r}$ we infer
\begin{equation}\label{eq:rauteraute}
\int_{[0,\pi]} t^k K_{m,r} (t) \ dt \leq \exp( C\cdot r) \cdot \left(\frac{r}{m}\right)^{k} \leq \exp(C \cdot r) \cdot r^k \cdot m^{-k}
\end{equation}
with an absolute constant $C > 0$.

We can now quantify the constant appearing in \cite[Theorem 4.3]{lorentz_approximation_2005}.
\begin{theorem}[{cf. \cite[Theorem 4.3]{lorentz_approximation_2005}}]\label{thm:lorentz_1d}
Let $k,m \in \NN$ and $f \in C^k_{2\pi}(\RR; \RR)$. Let 
\begin{equation*}
\omega(f^{(k)}, 1/m) \defeq \underset{x \in \RR, \vert t \vert \leq 1/m}{\max} \vert f^{(k)}(x+t)- f^{(k)}(x)\vert.
\end{equation*}
Then it holds 
\begin{equation*}
E_m^1 (f) \leq (\exp(C \cdot k) \cdot k^k) \cdot m^{-k} \cdot \omega(f^{(k)}, 1/m). 
\end{equation*}
Here, we recall that $E_m^1 (f)$ denotes the best possible approximation error when approximating $f$ using trigonometric polynomials of degree $m$; see \Cref{eq:mintrigo}. 
\end{theorem}
\begin{proof}
We follow the proof of \cite[Theorem 4.3]{lorentz_approximation_2005}. Take $r = k+1$ and define 
\begin{equation*}
I_m(x) \defeq - \int_{[-\pi, \pi]} K_{m,r}(t) \sum_{\ell =1}^{k+1} (-1)^\ell \binom{k+1}{\ell} f(x + \ell t) \ dt.
\end{equation*}
Then it is shown in the proof of \cite[Theorem 4.3]{lorentz_approximation_2005} that $I_m$ is a trigonometric polynomial of degree at most $m$ and that 
\begin{equation*}
\vert f(x) - I_m(x) \vert \leq 2 \cdot \omega_{k+1}(f, 1/m) \cdot \int_{[0,\pi]} (mt+1)^{k+1} K_{m,r}(t) \ dt.
\end{equation*}
Here, $\omega_{k+1}(f, 1/m)$ denotes the \emph{modulus of smoothness} of $f$ as defined on \cite[p.~47]{lorentz_approximation_2005}. The integral can be bounded via
\begin{align*}
&\norel\int_{[0,\pi]} (mt+1)^{k+1} K_{m,r}(t) \ dt \\
&= \int_{[0, 1/m]} (\underbrace{mt+1}_{\leq 2})^{k+1} K_{m,r}(t) \ dt + \int_{[1/m, \pi]}(\underbrace{mt+1}_{\leq 2mt})^{k+1} K_{m,r}(t) \ dt \\
&\leq 2^{k+1} \cdot \underbrace{\int_{[-\pi, \pi]}K_{m,r}(t) \ dt}_{= 1} + 2^{k+1}m^{k+1} \cdot \int_{[0,\pi]} \ t^{k+1}K_{m,r}(t) \ dt \\
\overset{\eqref{eq:rauteraute}}&{\leq} 2^{k+1} + 2^{k+1} m ^{k+1}\exp(C_1 \cdot r) \cdot (k+1)^{k+1} \cdot m^{-(k+1)} \overset{r\leq 2k}{\leq} \exp(C \cdot k ) \cdot k^{k} 
\end{align*}
with absolute constants $C,C_1 > 0$. Since $\omega_{k+1}(f, 1/m) \leq m^{-k} \cdot \omega(f^{(k)}, 1/m)$ follows from \cite[Equation~3.6(5)]{lorentz_approximation_2005}, the claim is shown.
\end{proof}
Therefore, we can bound the constant appearing in \cite[Theorem 4.3]{lorentz_approximation_2005} by $\exp(C \cdot k) \cdot k^k$. It remains to deal with the approximation of \emph{multivariate} periodic functions by \emph{multivariate} trigonometric polynomials which is contained in \cite[Theorem 6.6]{lorentz_approximation_2005}.

\begin{theorem}[{cf. \cite[Theorem 6.6]{lorentz_approximation_2005}}] \label{thm:const_lorentz_bound}
Let $s,k \in \NN$ and $f \in C^k_{2\pi}(\RR^s; \RR)$. Let $\omega_j$ denote the modulus of continuity of $\frac{\partial^k f}{ \partial x_j^k}$ for $j=1,...,s$. Then, with $E_m^s$ as introduced in \Cref{eq:mintrigo}, it holds
\begin{equation*}
E^s_m(f) \leq \exp(C \cdot ks) \cdot k^k \cdot m^{-k} \sum_{j=1}^s \omega_j(1/m),
\end{equation*}
with an absolute constant $C>0$.
\end{theorem}
\begin{proof}
We follow the proof of \cite[Theorem 6.6]{lorentz_approximation_2005} with $p_j = k$ and $n_j = m$ for every index $j = 1,...,s$. For $j = 1,..., s+1$ define the set $\mathcal{T}_j$ consisting of all functions $g \in C_{2\pi}(\RR^s; \RR)$ that are a trigonometric polynomial in $x_\ell$ of degree at most $m$ for $\ell < j$; in $x_\ell$ for $\ell \geq j$ they should have continuous partial derivatives $\frac{\partial^p g}{ \partial x_\ell^p}$ for $0\leq p \leq k$; the modulus of continuity of $\frac{\partial^{k} g}{\partial x_\ell^k}$ should not exceed $2^{K_j}\omega_j$, where
\begin{equation*}
K_j = (j-1)(k+1) \leq 2ks \quad \text{for } j>1 \text{ and } K_1=1\leq 2ks.
\end{equation*} 
Then it is shown that if $j \in \{1,...,s\}$ and $f_j \in \mathcal{T}_j$ there exists a function $f_{j+1} \in \mathcal{T}_{j+1}$ for which
\begin{equation*}
\Vert f_j - f_{j+1} \Vert_{L^\infty(\RR^s; \RR)} \leq \exp(C_1 \cdot k) \cdot k^k \cdot m^{-k} \cdot 2^{2ks} \cdot \omega_j(1/m) \leq \exp(C_2 \cdot ks) \cdot k^k \cdot m^{-k} \cdot \omega_j(1/m)
\end{equation*}
for absolute constants $C_1, C_2 > 0$. This is an application of \Cref{thm:lorentz_1d}. Hence, defining $f_1 \defeq f$, we see
\begin{equation*}
\Vert f - f_{s+1} \Vert_{L^\infty(\RR^s; \RR)} \leq \sum_{j=1}^{s} \Vert f_j - f_{j+1} \Vert_{L^\infty(\RR^s; \RR)} \leq \exp(C_2 \cdot ks) \cdot k^k \cdot m^{-k} \cdot \sum_{j=1}^s\omega_j(1/m). \qedhere
\end{equation*}
\end{proof}
Therefore, we have shown that the constant appearing in \cite[Theorem 6.6]{lorentz_approximation_2005} can be bounded from above by $\exp(C \cdot ks) \cdot k^k$. 

In the rest of this section, we discuss the operator norm of the operator defined in \Cref{star_operator}. In \Cref{star_operator} the closed graph theorem is used to show that the operator is bounded. However, the closed graph theorem does not provide any bound on the norm of the operator. Therefore, in order to quantify the operator norm, we need to apply a different technique, which is \emph{Faa di Bruno's formula}. This formula is a generalization of the chain rule to higher order derivatives.

\begin{theorem} \label{thm:faa}
Let $s,k \in \NN$. We define the operator
\begin{equation*}
T: \quad C^k([-1,1]^s ; \CC) \to C^k_{2\pi}([-\pi, \pi]^s;\CC), \quad (Tf)(x_1, ..., x_s) \defeq f(\cos(x_1), ..., \cos(x_s)).
\end{equation*}
Let $\aalpha \in \NN_0^s$ with $\vert \aalpha \vert \leq k$. Then, for any $f \in C^k([-1,1]^s;\CC)$, we have
\begin{equation*}
\Vert \partial^{\aalpha} (Tf) \Vert_{L^\infty([\pi, \pi]^s;\CC)} \leq \prod_{j=1}^s {\aalpha}_j^{{\aalpha}_j} \cdot \Vert f \Vert_{C^{\vert \aalpha \vert}([-1,1]^s)}.
\end{equation*}
\end{theorem}
\begin{proof}
The proof is by induction over $s$. The case $s=1$ is an application of Faa di Bruno's formula: We can write $Tf = f \circ g$ with $g(x) = \cos(x)$. We then take $\ell \in \NN_0$ with $\ell \leq k$ and some $x \in [-\pi, \pi]$. The set partition version of Faa di Bruno's formula (see for instance \cite[p.~219]{johnson_curious_2002}) then yields
\begin{equation*}
\left\vert (f \circ g)^{(\ell)}(x)\right\vert \leq \sum_{\pi \in \Pi_\ell} \left(\left\vert f^{(\left\vert\pi\right\vert)}(g(x))\right\vert \cdot \prod_{B \in \pi} \left\vert g^{(\vert B \vert)}(x)\right\vert\right).
\end{equation*}
Here, $\Pi_\ell$ denotes the set of all partitions of the set $\{1, ..., \ell\}$. Since all derivatives of $g$ are bounded by $1$ in absolute value and $\vert \pi \vert \leq \ell$ for every partition $\pi\in \Pi_\ell$ we get
\begin{equation*}
\Vert (f \circ g)^{(\ell)} \Vert_{L^\infty([-\pi, \pi];\CC)} \leq \vert \Pi_\ell \vert \cdot \Vert f \Vert_{C^\ell ([-1,1]; \CC)}.
\end{equation*}
The number $\vert \Pi_\ell \vert$ is the number of possible partitions of the set $\{1,..., \ell\}$ and is the so-called $\ell$-th \emph{Bell number}. It can be bounded from above by $\ell^\ell$ (see \cite[Theorem 2.1]{berend2010improved}). This proves the case $s=1$.

We now assume that the claim holds for an arbitrary but fixed $s \in \NN$. Take $\aalpha \in \NN_0^{s+1}$ with $\vert \aalpha \vert \leq k$. We decompose $\aalpha = (\aalpha', \aalpha_{s+1})$ with $\aalpha' \in \NN_0^s$. For a fixed variable $y_{s+1} \in [-1,1]$, we define 
\begin{equation*}
f_{y_{s+1}}(y_1,..., y_s) \defeq f(y_1, ..., y_s, y_{s+1}) \quad \text{for} \quad (y_1, ..., y_s) \in [-1,1]^s.
\end{equation*}
We denote $g(x_1,...,x_{s+1}) \defeq (\cos(x_1),..., \cos(x_{s+1}))$, $g_s(x_1,..., x_s) \defeq (\cos(x_1),..., \cos(x_s))$ and $\theta(x_{s+1}) \defeq \cos(x_{s+1})$. For every $(x_1,..., x_{s+1}) \in [-\pi,\pi]^{s+1}$ it then holds
\begin{equation*}
(f \circ g)(x_1, ..., x_{s+1}) = \left(f_{\theta(x_{s+1})} \circ g_s\right)(x_1,..., x_s).
\end{equation*}
We now differentiate $f \circ g$ with respect to the multiindex $\aalpha$ and get
\begin{align*}
\left[\partial^{\aalpha}(f \circ g) \right] (x_1,...,x_{s+1}) &= \frac{\partial^{\aalpha_{s+1}}}{\partial x_{s+1}^{\aalpha_{s+1}}}\left[\partial^{\aalpha'} \left(f_{\theta(x_{s+1}) } \circ g_s\right)(x_1,..., x_s)\right] \\
&= (h_{x_1,...,x_s} \circ \theta)^{(\aalpha_{s+1})}(x_{s+1})
\end{align*}
where we define
\begin{equation*}
h_{x_1,...,x_s}(y_{s+1})\defeq \partial^{\aalpha'} \left(f_{y_{s+1}} \circ g_s\right)(x_1,...,x_s) \quad \text{for} \quad (x_1,...,x_s) \in [-\pi, \pi]^s  \text{ and }  y_{s+1} \in [-1,1].
\end{equation*}
Using the case $s=1$, we get
\begin{equation*}
\left\vert\left[\partial^{\aalpha}(f \circ g) \right] (x_1,...,x_{s+1}) \right\vert = \left\vert (h_{x_1,...,x_s} \circ \theta)^{(\aalpha_{s+1})}(x_{s+1})\right\vert \leq \aalpha_{s+1}^{\aalpha_{s+1}} \cdot \left\Vert h_{x_1,...,x_s}\right\Vert_{C^{\aalpha_{s+1}}([-1,1]; \CC)}
\end{equation*}
for any fixed $(x_1,...,x_s) \in [-\pi,\pi]^s$. 

It remains to bound $\left\Vert h_{x_1,...,x_s}\right\Vert_{C^{\aalpha_{s+1}}([-1,1]; \CC)}$. To this end, we fix $\ell \in \NN_0$ with $\ell \leq \aalpha_{s+1}$. We further denote
\begin{equation*}
F_{y_1,...,y_s}(y_{s+1}) \defeq f(y_1,...,y_s,y_{s+1}) \quad \text{for} \quad (y_1, ..., y_{s+1}) \in [-1,1]^{s+1}.
\end{equation*} 
For arbitrary $(x_1,..., x_s) \in [-\pi,\pi]^{s}$ and $y_{s+1} \in [-1, 1]$ we then see
\begin{align*}
h_{x_1,...,x_s}^{(\ell)}(y_{s+1}) = \partial^{\aalpha'} \left[(x_1,...,x_s) \mapsto F^{(\ell)} _{g_s(x_1,...,x_s)}(y_{s+1})\right]  =  \partial^{\aalpha'}\left[H_{y_{s+1}} \circ g_s \right](x_1,...,x_s)
\end{align*} 
where 
\begin{equation*}
H_{y_{s+1}}(y_1,...,y_s) \defeq F^{(\ell)}_{y_1,...,y_s}(y_{s+1}) \quad \text{for } (y_1,...,y_s) \in [-1,1]^s.
\end{equation*}
Hence, we see by induction that
\begin{align*}
\left\vert h_{x_1,...,x_s}^{(\ell)}(y_{s+1}) \right\vert &= \left\vert \partial^{\aalpha'}\left[H_{y_{s+1}} \circ g_s \right](x_1,...,x_s) \right\vert \overset{\text{IH}}{\leq} \prod_{j=1}^{s} \aalpha_j^{\aalpha_j} \cdot \left\Vert H_{y_{s+1}}\right\Vert_{C^{\vert \aalpha'\vert}([-1,1]^s; \CC)} \\
&\leq \prod_{j=1}^{s} \aalpha_j^{\aalpha_j} \cdot \Vert f \Vert_{C^{\vert \aalpha \vert}([-1,1]^{s+1} ; \CC)}
\end{align*}
as was to be shown.
\end{proof}

\begin{remark}\label{rem:multiindex}
For a multiindex $\aalpha \in \NN_0^s$ with $\vert \aalpha \vert \leq k$ we see
\begin{equation*}
\prod_{j=1}^{s} \aalpha_j^{\aalpha_j} \leq k^{\sum_{j=1}^s \aalpha_j} \leq k^k.
\end{equation*}
Hence, the norm of the operator introduced in \Cref{star_operator} can be bounded from above by $k^k$.
\end{remark}

\subsection{Proof of Theorem \ref{main_2}}
\label{ck_functions_reordered}
For any natural number $\ell \in \NN_0$, we denote by $T_\ell$ the $\ell$-th Chebyshev polynomial, satisfying
\begin{equation*}
    T_\ell\left(\cos(x)\right) = \cos(\ell x), \quad x \in \RR.
\end{equation*}
For a multi-index $\kk \in \NN_0^s$ we define
\begin{equation*}
    T_\kk (x) \defeq \prod_{j=1}^s T_{\kk_j}\left(x_j\right), \quad x \in [-1,1]^s.
\end{equation*}
The proof of \Cref{main_2} relies on the fact that $C^k$-functions can by approximated
at a certain rate using linear combinations of the $T_\kk$ (see \Cref{app: fourier_approx}).
We also refer to \Cref{fig:proof} for an illustration of the overall proof strategy
of \Cref{main_2}.

\medskip

\begin{proof}[Proof of \Cref{main_2}]
    Choose $M \in \NN$ as the largest integer for which $(16M-7)^{2n} \leq m$, where we assume without loss of generality that $9^{2n} \leq m$, which can be done by choosing $\sigma_j = 0$ for all $j \in \{1,...,m\}$ for $m < 9^{2n}$, at the cost of possibly enlarging $c$. First we note that by the choice of $M$ the inequality
    \begin{equation*}
        m \leq (16M + 9)^{2n}
    \end{equation*}
    holds true. Since $16M +9 \leq 25M$, we get $m \leq 25^{2n} \cdot M^{2n}$ or equivalently
    \begin{equation}
    \label{M_bound}
        \frac{m^{1/2n}}{25} \leq M.
    \end{equation}
    According to \Cref{app: fourier_approx} we choose a constant $c_1 = c_1(n,k)$ with the property that for any function $f \in C^k \left( [-1,1]^{2n}; \RR\right)$ there exists a polynomial 
\begin{equation*}
	P = \sum_{0 \leq \kk \leq 2M-1} \mathcal{V} _\kk^M (f) \cdot T_\kk
\end{equation*}
 of coordinatewise degree at most $2M-1$ satisfying
    \begin{equation*}
        \left\Vert f - P \right\Vert_{L^\infty \left([-1,1]^{2n}; \RR\right)} \leq \frac{c_1}{M^k} \cdot \left\Vert f\right\Vert_{C^k \left([-1,1]^{2n}; \RR\right)}.
    \end{equation*}
    Furthermore, according to \Cref{app: fourier_approx}, we choose a constant $c_2 = c_2(n)$, such that the inequality
    \begin{equation*}
        \sum_{0 \leq \kk \leq 2M-1} \left\vert \mathcal{V}_\kk^M (f) \right\vert \leq c_2 \cdot M^n \cdot \Vert f \Vert_{L^\infty([-1,1]^{2n}; \RR)}\leq c_2 \cdot M^{n} \cdot \left\Vert f\right\Vert_{C^k \left([-1,1]^{2n}; \RR\right)}    
    \end{equation*}
    holds for all $f \in C^k \left( [-1,1]^{2n} ; \RR\right)$. The final constant is then defined to be
    \begin{equation*}
        c= c(n,k) \defeq \sqrt{2} \cdot 25^k \cdot \left(c_1 + c_2\right).
    \end{equation*}
    Fix $\kk \leq 2M-1$. Since $T_\kk$ is a polynomial of componentwise degree less or equal to $2M-1$ with $\varphi_n$ as in \eqref{isomorphism_intro}, we have a representation 
    \begin{equation*}
        \left(T_\kk \circ \varphi_n^{-1} \right)(z) = \underset{\elll^1, \elll^2 \leq 2M-1}{\sum_{\elll^1, \elll^2 \in \NN_0^n}} a_{\elll^1, \elll^2}^\kk \prod_{t=1}^n \RE \left(z_t\right)^{\elll^1_t} \IM \left(z_t\right)^{\elll^2_t}
    \end{equation*}
    with suitably chosen coefficients $a_{\elll^1, \elll^2}^\kk \in \CC$. 
    By using the identities $\RE\left(z_t\right) = \frac{1}{2}\left(z_t + \overline{z_t}\right)$ and also $\IM\left(z_t\right) = \frac{1}{2i}\left( z_t - \overline{z_t}\right)$ we can rewrite $T_\kk \circ \varphi_n^{-1}$ into a complex polynomial in $z$ and $\overline{z}$, i.e.,
    \begin{equation*}
        \left(T_\kk \circ \varphi_n^{-1}\right)\left(z\right) = \underset{\elll^1, \elll^2 \leq 4M - 2}{\sum_{\elll^1, \elll^2 \in \NN_0^{n}}} b_{\elll^1, \elll^2}^\kk z^{\elll^1} \overline{z}^{\elll^2}
    \end{equation*}
    with complex coefficients $b_{\elll^1, \elll^2}^\kk \in \CC$.
    Using \Cref{main_1}, we choose $\rho_1, ..., \rho_m \in \CC^n$ and $b \in \CC$, such that for any polynomial $P \in \left\{ T_\kk \circ \varphi_n^{-1} : \ \kk \leq 2M-1\right\} \subseteq \mathcal{P}_{4M-2}^n$ there are coefficients $\sigma_1(P), ..., \sigma_m(P) \in \CC$, such that
    \begin{equation}
    \label{gp}
        \left\Vert g_P - P \right\Vert_{L^\infty \left(\Omega_n; \CC\right)} \leq M^{-k-n}, 
    \end{equation}
    where 
    \begin{equation*}
        g_P \defeq \sum_{t=1}^m \sigma_t(P) \phi \left(\rho_t^T z + b\right).
    \end{equation*}
    Note that here we implicitly use the bound $(4\cdot (4M-2) + 1)^{2n} \leq m$. We are now going to show that the chosen constant and the chosen vectors $\rho_t$ have the desired property.

    Let $f \in C^k\left(\Omega_n; \CC\right)$. By splitting $f$ into real and imaginary part, we write $f = f_1 + i \cdot f_2$ with $f_1 , f_2 \in C^k \left(\Omega_n ; \RR\right)$. For the following, fix $ j \in \{1,2\}$ and note that $f_j \circ \varphi_n \in C^k\left([-1,1]^{2n}; \RR\right)$. By choice of $c_1$, there exists a polynomial $P$ with the property
    \begin{equation*}
        \left\Vert f_j \circ \varphi_n - P \right\Vert_{L^\infty \left([-1,1]^{2n}; \RR \right)} \leq \frac{c_1}{M^k} \cdot \left\Vert f_j \circ \varphi_n\right\Vert_{C^k \left([-1,1]^{2n}; \RR\right)}
    \end{equation*}
    or equivalently
    \begin{equation}
    \label{bound_1}
        \left\Vert f_j  - P \circ \varphi_n^{-1}\right\Vert_{L^\infty \left(\Omega_n; \RR\right)} \leq \frac{c_1}{M^k} \cdot \left\Vert f_j \right\Vert_{C^k \left(\Omega_n; \RR\right)},
    \end{equation}
    where $P \circ \varphi_n^{-1}$ can be written in the form 
    \begin{equation*}
        \left(P \circ \varphi_n^{-1}\right)\left(z\right) = \sum_{0 \leq \kk \leq 2M-1} \mathcal{V}_\kk^M\left(f_j \circ \varphi_n\right) \cdot \left(T_\kk \circ \varphi_n^{-1}\right)(z).
    \end{equation*}
    We choose the function $g_{T_\kk \circ \varphi_n^{-1}}$ according to (\ref{gp}). Thus, writing
    \begin{equation*}
        g_j \defeq \sum_{0 \leq \kk \leq 2M-1} \mathcal{V}_\kk^M\left(f_j \circ \varphi_n\right) \cdot g_{T_\kk \circ \varphi_n^{-1}},
    \end{equation*}
    we obtain
    \begin{align}
    \label{bound_2}
        \left\Vert P \circ \varphi_n^{-1} - g_j \right\Vert_{L^\infty \left(\Omega_n ; \RR \right)} &\leq \sum_{0 \leq \kk \leq 2M-1} \left\vert \mathcal{V}_\kk^M \left(f_j \circ \varphi_n\right)\right\vert \cdot \underbrace{\left\Vert T_\kk \circ \varphi_n^{-1} - g_{T_{\kk} \circ \varphi_n^{-1}}\right\Vert_{L^\infty \left(\Omega_n ; \RR\right)}}_{\leq M^{-k-n}} \nonumber\\
        &\leq M^{-k-n} \cdot \sum_{0 \leq \kk \leq 2M-1} \left\vert \mathcal{V}_\kk^M \left(f_j \circ \varphi_n\right)\right\vert \nonumber \\
        &\leq \frac{c_2}{M^{k}} \left\Vert f_j \circ \varphi_n \right\Vert_{C^k \left([-1,1]^{2n}; \RR\right)} = \frac{c_2}{M^{k}} \left\Vert f_j  \right\Vert_{C^k \left(\Omega_n; \RR\right)}.
    \end{align}
    Combining (\ref{bound_1}) and (\ref{bound_2}), we see 
    \begin{equation*}
        \left\Vert f_j - g_j \right\Vert_{L^\infty \left(\Omega_n; \RR\right)} \leq \frac{c_1 + c_2}{M^k} \cdot \left\Vert f_j \right\Vert_{C^k \left(\Omega_n; \RR\right)} \leq \frac{c_1 + c_2}{M^k} \cdot \left\Vert f \right\Vert_{C^k \left(\Omega_n; \CC\right)}.
    \end{equation*}
    In the end, define 
    \begin{equation*}
        g \defeq g_1 + i \cdot g_2.
    \end{equation*}
    Since the vectors $\rho_t$ have been chosen fixed, it is clear that, after rearranging, $g$ has the desired form, i.e., $g = \sigma^T \Phi$ where $\Phi(z) = \left(\phi(\rho_t z + b)\right)_{t=1}^m$. Furthermore, one obtains the bound
    \begin{align*}
        \left\Vert f-g\right\Vert_{L^\infty \left(\Omega_n; \CC\right)} &\leq \sqrt{\left\Vert f_1 - g_1 \right\Vert^2_{L^\infty \left(\Omega_n; \RR\right)} + \left\Vert f_2 - g_2 \right\Vert^2_{L^\infty \left(\Omega_n; \RR\right)}} \\
        &\leq \frac{c_1 + c_2}{M^k} \cdot \sqrt{\left\Vert f \right\Vert^2_{C^k \left(\Omega_n; \CC\right)} + \left\Vert f \right\Vert^2_{C^k \left(\Omega_n; \CC\right)}} \\
        &\leq \frac{\sqrt{2} \cdot \left(c_1 + c_2\right)}{M^k} \cdot  \ \left\Vert f \right\Vert_{C^k \left(\Omega_n; \CC\right)}.
    \end{align*}
    Using (\ref{M_bound}), we see
    \begin{equation*}
        \left\Vert f-g\right\Vert_{L^\infty \left(\Omega_n; \CC\right)} \leq \frac{\sqrt{2} \cdot 25^k \cdot \left(c_1 + c_2\right)}{m^{k/2n}} \cdot  \ \left\Vert f \right\Vert_{C^k \left(\Omega_n; \CC\right)},
    \end{equation*}
    as desired. 

    The linearity and continuity of the maps $f \mapsto \sigma_j(f)$ (with respect to the $\Vert \cdot \Vert_{L^\infty}$-norm) follow easily from the fact that the map $f \mapsto \mathcal{V}_\kk^M(f)$ is a continuous linear functional for every multiindex $0 \leq \kk \leq 2M-1$.
\end{proof}

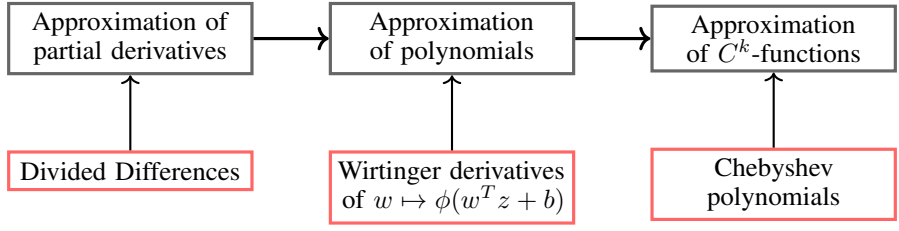
\begin{figure}[t]
\centering
\begin{tikzpicture}[
roundnode/.style={circle, draw=green!60, fill=green!5, very thick, minimum size=7mm},
squarednode/.style={rectangle, draw=black!60,  very thick, minimum size=5mm, text width=3cm,align=center},
sidenode/.style={rectangle, draw=red!60,  very thick, minimum size=5mm, text width=3cm,align=center}
]
\node[squarednode]      (partialdev)                              {Approximation of partial derivatives};
\node[squarednode]        (polyn)       [right=of partialdev] {Approximation of polynomials};
\node[squarednode]      (ckfunctions)       [right=of polyn] {Approximation of $C^k$-functions};
\node[sidenode]            (divided_differences)  [below=of partialdev] {Divided Differences};
\node[sidenode]            (phi)  [below=of polyn] {Wirtinger derivatives of $w \mapsto \phi(w^T z + b)$};
\node[sidenode]            (cheby)  [below=of ckfunctions] {Chebyshev polynomials};

\draw[->, very thick] (partialdev.east) -- (polyn.west);
\draw[->, very thick] (polyn.east) -- (ckfunctions.west);
\draw[->, thick] (divided_differences.north) -- (partialdev.south);
\draw[->, thick] (phi.north) -- (polyn.south);
\draw[->, thick] (cheby.north) -- (ckfunctions.south);
\end{tikzpicture}
\caption{Schematic for the proof of the main result (\Cref{main_2}). The first row shows the different steps of the proof and the second row indicates the main tools used.}
\label{fig:proof}
\end{figure}

\section{Approximation of holomorphically extendable functions} \label{sec:polyellipse}
In this appendix we provide the proofs for the statements contained in \Cref{remark:holo}. The proofs mainly rely on results about sparse polynomial approximation \cite{adcock_sparse_2022}.
\begin{definition}[{cf. \cite[Assumption~2.3]{adcock_sparse_2022}}]
Let $s \in \NN$ and $\nuu \in (1, \infty)^s$. For every $j \in \{1,...,s\}$ let
\begin{equation*}
\mathcal{E}_{\nuu_j} \defeq \left\{ \frac{z + z^{-1}}{2}: \ z \in \CC, 1 \leq \vert z \vert \leq \nuu_j\right\}.
\end{equation*}
We then define the \emph{(filled-in) Bernstein polyellipse} of parameter $\nuu$ as
\begin{equation*}
\mathcal{E}_{\nuu} \defeq \mathcal{E}_{\nuu_1} \times \cdots \times \mathcal{E}_{\nuu_s} \subseteq \CC^s
\end{equation*}
and observe that $[-1,1]^s \subseteq \mathcal{E}_{\nuu}$ when we interpret $[-1,1]^s$ as a subset of $\CC^s$.
We further define 
\begin{equation*}
\mathcal{V}_{s}(\nuu) \defeq \left\{ f: [-1,1]^s \to \CC: \  \exists \text{ open } U \supseteq \mathcal{E}_{\nuu} \text{ and }\tilde{f} : U \to \CC \text{ holomorphic with } \fres{\tilde{f}}{[-1,1]^s} = f \right\}.
\end{equation*}
Here, $[-1,1]^s$ is again interpreted as a subset of $\CC^s$. Moreover, note that such an extension $\tilde{f}$ is, if existent, unique, as follows from the identity theorem for holomorphic functions. Hence, the expression
\begin{equation*}
\Vert f \Vert_{\mathcal{V}_s(\nuu)} \defeq \Vert \tilde{f} \Vert_{L^\infty (\mathcal{E}_{\nuu}; \CC)}
\end{equation*}
is well-defined. 
\end{definition}
\begin{definition} [{cf. \cite[pp.~25,28~ff.]{adcock_sparse_2022}}]
Let $s \in \NN$. We define a probability measure on $[-1,1]^s$ via
\begin{equation*}
d\mu_s \defeq  \prod_{j=1}^s \frac{1}{\pi\sqrt{1-x_j^2}} \ dx.
\end{equation*}
We define the \emph{normalized} Chebyshev polynomials for $\kk \in \NN_0^s$ as
\begin{equation*}
\widetilde{T_\kk} (x) \defeq 2^{\Vert \kk \Vert_0 / 2} \prod_{j=1}^s \ \cos(\kk_j \arccos(x_j)),
\end{equation*}
where $\Vert \kk \Vert_0 \defeq \#\{1 \leq j \leq s: \ \kk_j \neq 0\}$.
Note that this definition differs slightly from the notion used in \Cref{sec:fourier_reordered,ck_functions_reordered}.
\end{definition}

The following lemma is crucial for deriving the approximation rate in \Cref{thm:polyellipse}. The proofs can be found in \cite{adcock_sparse_2022}.
\begin{lemma}[{cf. \cite[Remark~2.15~and~Theorem~3.2]{adcock_sparse_2022}}] \label{lem:decay_poly}
Let $s \in \NN$, $\kk \in \NN_0^s$, $\nuu \in (1,\infty)^s$ and $f \in \mathcal{V}_s(\nuu)$. Then it holds
\begin{enumerate}
\item $\Vert \widetilde{T_{\kk}}\Vert_{L^\infty([-1,1]^s;\RR)} = 2^{\Vert \kk \Vert_0 / 2}$, where
      $\| \kk \|_0 = \# \{ 1 \leq j \leq s : \kk_j \neq 0  \}$;
\item $|\langle \widetilde{T_{\kk}}, f \rangle_{\mu_s} | \leq \nuu^{-\kk} \cdot 2^{\Vert \kk \Vert_0 / 2}\cdot \Vert f \Vert_{\mathcal{V}_s(\nuu)}$.
\end{enumerate}
\end{lemma}

It is a well-known fact that the Chebyshev polynomials $\widetilde{T_\kk}$
form an \emph{orthonormal basis} of $L^2_{\mu_s}([-1,1]^s ; \CC)$.
The following proposition states that functions from $\mathcal{V}_s(\nuu)$
can even be approximated uniformly by linear combinations of the $\widetilde{T_\kk}$ at a certain rate.
The proof follows essentially by applying \Cref{lem:decay_poly}.

\begin{proposition}\label{prop:polyellipse_approx}
Let $s,m \in \NN$ and $\nuu \in (1, \infty)^s$. Let $\nu \defeq \underset{j=1,...,s}{\min} \nuu_j$. Then there exists a constant $c = c(s, \nuu) > 0$ with the following property: For every $f \in \mathcal{V}_s(\nuu)$, defining
\begin{equation*}
P_m \defeq \underset{\kk \leq m}{\sum_{\kk \in \NN_0^s}} \langle f, \widetilde{T_\kk} \rangle_{\mu_s} \cdot \widetilde{T_\kk},
\end{equation*}
it holds
\begin{equation*}
\Vert f- P_m\Vert_{L^\infty([-1,1]^s; \CC)} \leq c \cdot \nu^{-m} \cdot \Vert f \Vert_{\mathcal{V}_s(\nuu)}.
\end{equation*}
\end{proposition}

\begin{proof}
Let $f \in \mathcal{V}_s(\nuu)$. Since the $\widetilde{T_\kk}$ form an orthonormal basis of $L^2_{\mu_s}([-1,1]^s; \CC)$ and since $\mu_s$ is a probability measure, so that $f \in C([-1,1]^s ; \CC) \subseteq L^2_{\mu_2}([-1,1]^s ; \CC)$, it follows that
\begin{equation*}
f = \sum_{\kk \in \NN_0^s}  \langle f, \widetilde{T_\kk} \rangle_{\mu_s} \cdot \widetilde{T_\kk}
\end{equation*}
with unconditional convergence in $L^2_{\mu_s}$. For $x \in [-1,1]^s$, note that
\begin{align*}
\sum_{\kk \in \NN_0^s}  \vert \langle f, \widetilde{T_\kk} \rangle_{\mu_s}\vert \cdot \vert \widetilde{T_\kk} (x) \vert \leq \sum_{\kk \in \NN_0^s}  \vert \langle f, \widetilde{T_\kk} \rangle_{\mu_s}\vert \cdot \Vert \widetilde{T_\kk} \Vert_{L^\infty ([-1,1]^s; \CC)} \leq 2^{s} \Vert f \Vert_{\mathcal{V}_s(\nuu)} \cdot \sum_{\kk \in \NN_0^s} \nuu^{-\kk}.
\end{align*}
Here, we employed \Cref{lem:decay_poly} at the last inequality. From $\nuu > 1$ it follows that
\begin{equation*}
\sum_{\kk \in \NN_0^s} \langle f, \widetilde{T_\kk} \rangle_{\mu_s} \cdot \widetilde{T_\kk}
\end{equation*}
converges \emph{pointwise} (even uniformly) and, since all the involved functions are continuous, this pointwise limit then has to coincide with the $L^2_{\mu_s}$-limit $f$. Hence, it holds
\begin{align*}
\Vert f- P_m\Vert_{L^\infty([-1,1]^s; \CC)}  &\leq \underset{\kk \nleq m}{\sum_{\kk \in \NN_0^s}} \vert \langle f, \widetilde{T_\kk} \rangle_{\mu_s}\vert \cdot \Vert \widetilde{T_\kk} \Vert_{L^\infty ([-1,1]^s; \CC)} \\
&\leq 2^{s} \Vert f \Vert_{\mathcal{V}_s(\nuu)} \cdot \underset{\vert \kk \vert \nleq m}{\sum_{\kk \in \NN_0^s}} \nuu^{-\kk},
\end{align*}
where we again used \Cref{lem:decay_poly}.
To complete the proof we compute
\begin{align*}
\underset{\vert \kk \vert \nleq m}{\sum_{\kk \in \NN_0^s}} \nuu^{-\kk} &\leq \sum_{j=1}^s \underset{\kk_j > m}{\sum_{\kk \in \NN_0^s}} \nuu^{-\kk } = \sum_{j=1}^s \left(\underset{l \neq j}{\prod_{1 \leq \ell \leq s}}\left(\sum_{k=0}^\infty \nuu_\ell ^{-k}\right) \cdot \sum_{k= m+1}^\infty \nuu_j^{-k}\right)\\
&\leq \sum_{j=1}^s \left(\nuu_j^{-(m+1)}\prod_{1 \leq \ell \leq s}\left(\sum_{k=0}^\infty \nuu_\ell ^{-k}\right)\right)\\
&= \prod_{1 \leq \ell \leq s}\left(\sum_{k=0}^\infty \nuu_\ell ^{-k}\right) \cdot \sum_{j=1}^s \nuu_j^{-(m+1)} \\
&= \prod_{1 \leq \ell \leq s} \left( \frac{\nuu_\ell}{\nuu_\ell - 1}\right) \cdot s \cdot \nu^{-(m+1)}
\end{align*}
and define $c(s, \nuu) \defeq 2^s \cdot s \cdot \prod_{\ell=1}^s \frac{\nuu_j}{\nuu_j- 1} \cdot \nu^{-1}$.
\end{proof}
To formulate the result for the approximation of holomorphically extendable functions using CVNNs we need to transfer the definition of $\mathcal{V}_s(\nuu)$ to the complex setting. For $\nuu \in (1,\infty)^{2n}$ and with $\varphi_n$ as in Equation \eqref{isomorphism_intro}, we hence write
\begin{equation*}
\mathcal{W}_n(\nuu) \defeq \left\{ f: \Omega_n \to \CC: \ f \circ \fres{\varphi_n}{[-1,1]^{2n}} \in \mathcal{V}_{2n}(\nuu)\right\}
\end{equation*}
 For $f \in \mathcal{W}_n(\nuu)$ we define
\begin{equation*}
\Vert f \Vert_{\mathcal{W}_n(\nuu)} \defeq \Vert f \circ \varphi_n \Vert_{\mathcal{V}_{2n}(\nuu)}.
\end{equation*}
Thus, $\mathcal{W}_n(\nuu)$ consists of all complex-valued functions defined on $\Omega_n$ that can be holomorphically extended onto some polyellipse in $\CC^{2n}$, where $\Omega_n \subseteq \CC^n$ is interpreted as a subset of $\RR^{2n}$ and then as a subset of $\CC^{2n}$. The final approximation result then reads as follows.
\begin{theorem}\label{thm:polyellipse}
Let $n \in \NN$ and $\nuu \in (1, \infty)^{2n}$. Set 
\begin{equation*}
\nu \defeq \underset{1 \leq j \leq 2n}{\min} \nuu_j.
\end{equation*}
 Then there exists a constant $c = c(n, \nuu)> 0$ with the following property: For every function $\phi: \CC \to \CC$ that is smooth and non-polyharmonic on some open set $\emptyset \neq U \subset \CC$ and for every $m \in \NN$ there exists a first layer $\Phi \in \mathcal{F}^\phi_{n,m}$ with the property that for every $f \in \mathcal{W}_n(\nuu)$ there exist coefficients $\sigma = \sigma(f) \in \CC^m$ such that
\begin{equation*}
\Vert f - \sigma^T \Phi \Vert_{L^\infty(\Omega_n ; \CC)} \leq c \cdot \nu^{-m^{1/(2n)} / 17}\cdot \Vert f \Vert_{\mathcal{W}_n(\nuu)}.
\end{equation*}
Moreover, the map $f \mapsto \sigma(f)$ is a continuous linear functional with respect to the $L^\infty$-norm.
\end{theorem}
\begin{proof}
Choose $M \in \NN$ as the largest integer satisfying $(8M+1)^{2n} \leq m$, where we assume without loss of generality that $9^{2n} \leq m$. This can be done by choosing $\sigma = 0$ for $m < 9^{2n}$ at the cost of possibly enlarging $c$. Note that the maximality of $M$ implies $(8M + 9)^{2n}> m$. By using $8M +9 \leq 17M$, this gives us
\begin{equation}\label{eq:M_lower_bound}
M > \frac{1}{17} \cdot m^{1/(2n)}.
\end{equation}
Choose the constant $c_1 = c_1(2n, \nuu)$ according to \Cref{prop:polyellipse_approx}. 

Fix $\kk \in \NN_0^{2n}$ with $\kk \leq M$. Since $\widetilde{T_\kk}$ is a polynomial of componentwise degree at most $M$, we have a representation 
    \begin{equation*}
        \left(\widetilde{T_\kk} \circ \varphi_n^{-1} \right)(z) = \underset{\elll^1, \elll^2 \leq M}{\sum_{\elll^1, \elll^2 \in \NN_0^n}} a_{\elll^1, \elll^2}^\kk \prod_{t=1}^n \RE \left(z_t\right)^{\elll^1_t} \IM \left(z_t\right)^{\elll^2_t}
    \end{equation*}
    with suitably chosen coefficients $a_{\elll^1, \elll^2}^\kk \in \CC$. 
    By using the identities $\RE\left(z_t\right) = \frac{1}{2}\left(z_t + \overline{z_t}\right)$ and also $\IM\left(z_t\right) = \frac{1}{2i}\left( z_t - \overline{z_t}\right)$, we can rewrite $\widetilde{T_\kk} \circ \varphi_n^{-1}$ into a complex polynomial in $z$ and $\overline{z}$, i.e.,
    \begin{equation*}
        \left(\widetilde{T_\kk} \circ \varphi_n^{-1}\right)\left(z\right) = \underset{\elll^1, \elll^2 \leq 2M}{\sum_{\elll^1, \elll^2 \in \NN_0^{n}}} b_{\elll^1, \elll^2}^\kk z^{\elll^1} \overline{z}^{\elll^2}
    \end{equation*}
    with complex coefficients $b_{\elll^1, \elll^2}^\kk \in \CC$.
    Using \Cref{main_1}, we choose $\rho_1, ..., \rho_m \in \CC^n$ and $b \in \CC$, such that for any polynomial $P \in \left\{ \widetilde{T_\kk} \circ \varphi_n^{-1} : \ \kk \leq M\right\} \subseteq \mathcal{P}_{2M}^n$ there exist coefficients $\sigma_1(P), ..., \sigma_m(P) \in \CC$, such that
    \begin{equation} 
    \label{eq:gp}
        \left\Vert g_P - P \right\Vert_{L^\infty \left(\Omega_n; \CC\right)} \leq \left(\sum_{\kk \in \NN_0^{2n}} \nuu^{-\kk} \right)^{-1} \cdot \nu^{-(M+1)}, 
    \end{equation}
    where 
    \begin{equation*}
        g_P \defeq \sum_{t=1}^m \sigma_t(P) \phi \left(\rho_t^T z + b\right).
    \end{equation*}
    Note that here we implicitly use the bound $(4 \cdot (2M) + 1)^{2n} \leq m$. We are now going to show that the first layer $\Phi \in \mathcal{F}^\phi_{n,m}$ defined using the $\rho_t$ and $b$ (i.e., $\Phi(z) = (\phi (\rho_t^T z  + b))_{t=1}^m$) has the desired property.
    
    To this end, take an arbitrary function $f \in \mathcal{W}_n(\nuu)$. \Cref{prop:polyellipse_approx} tells us that
    \begin{equation}\label{eq:prop_from_above}
    \Vert f- P_M \circ \varphi_n^{-1}\Vert_{L^\infty(\Omega_n; \CC)} = \Vert f \circ \varphi_n - P_M\Vert_{L^\infty([-1,1]^{2n}; \CC)} \leq c_1 \cdot \nu^{-(M+1)} \cdot \Vert f \circ \varphi_n \Vert_{\mathcal{V}_{2n}(\nuu)},
    \end{equation} 
    where
    \begin{equation*}
    P_M \circ \varphi_n^{-1} = \underset{\kk \leq M}{\sum_{\kk \in \NN_0^{2n}}} \langle f \circ \varphi_n, \widetilde{T_\kk} \rangle_{\mu_{2n}} \cdot (\widetilde{T_\kk} \circ \varphi_n^{-1}).
    \end{equation*}
    We then define the approximating network 
    \begin{equation*}
    g\defeq  \underset{\kk \leq M}{\sum_{\kk \in \NN_0^{2n}}} \langle f \circ \varphi_n, \widetilde{T_\kk} \rangle_{\mu_{2n}} \cdot g_{\widetilde{T_\kk} \circ \varphi_n^{-1}}.
    \end{equation*}
    From the definition of the functions $g_{\widetilde{T_\kk} \circ \varphi_n^{-1}}$ it follows directly that $g$ is a shallow CVNN with first layer $\Phi \in \mathcal{F}^\phi_{n,m}$. Furthermore, it holds
    \begin{align*}
    \Vert P_M \circ \varphi_n^{-1} - g\Vert_{L^\infty(\Omega_n; \CC)} &\leq \underset{\kk \leq M}{\sum_{\kk \in \NN_0^{2n}}} \vert\langle f \circ \varphi_n, \widetilde{T_\kk} \rangle_{\mu_{2n}} \vert \cdot \Vert\widetilde{T_\kk} \circ \varphi_n^{-1} - g_{\widetilde{T_\kk} \circ \varphi_n^{-1}}\Vert_{L^\infty(\Omega_n; \CC)} \\
    \overset{\eqref{eq:gp}, \text{Lem. }\mathrm{\ref{lem:decay_poly}}}&{\leq} 2^{n} \cdot \Vert f \circ \varphi_n\Vert_{\mathcal{V}_{2n}(\nuu)} \cdot \underset{\kk \leq M}{\sum_{\kk \in \NN_0^{2n}}} \nuu^{-\kk} \cdot \left(\sum_{\kk \in \NN_0^{n}} \nuu^{-\kk} \right)^{-1} \cdot \nu^{-(M+1)} \\
    &\leq 2^{n} \cdot \Vert f \Vert_{\mathcal{W}_n(\nuu)} \cdot \nu^{-(M+1)}.
    \end{align*}
    Combining this estimate with \Cref{eq:prop_from_above} and applying the triangle inequality gives us
    \begin{equation*}
    \Vert f-g\Vert_{L^\infty(\Omega_n; \CC)} \leq (c_1 + 2^n) \cdot \nu^{-(M+1)} \cdot \Vert f \Vert_{L^\infty(\mathcal{E}_{\nuu};\CC)}.
    \end{equation*}
    Hence, the claim follows taking $c \defeq c_1 + 2^n$ and using \eqref{eq:M_lower_bound}.
    
    The continuity of the linear map $f \mapsto \sigma(f)$ with respect to the $L^\infty$-norm follows directly from the fact that $f \mapsto \langle f \circ \varphi_n, \widetilde{T_\kk} \rangle_{\mu_{2n}}$ is trivially continuous with respect to the $L^2_{\mu_{2n}}$-norm and hence also continuous with respect to the $L^\infty$-norm since $\mu_{2n}$ is a probability measure.
\end{proof}

\section{Postponed proofs for the optimality results in the case of continuous weight selection}
In this section we provide the proofs for the optimality results derived if a continuous weight selection is assumed.
Specifically, we prove \Cref{thm:intrac,thm:opti_conti}. 
The proofs of both results rely on a very general result about the approximation in normed spaces by subsets that are parametrizable with $m$ parameters \cite{devore_optimal_1989}. 
We decided to include a detailed proof for this approximation result in this paper since the nature of the continuity assumption in \cite{devore_optimal_1989} is not completely clear. 

\begin{proposition}[{\cite[Theorem 3.1]{devore_optimal_1989}}] \label{thm: devore}
  Let $(X, \Vert \cdot \Vert_X)$ be a normed space,
  $\emptyset \neq K \subseteq X$ a subset and $V \subseteq X$ a linear,
  not necessarily closed subspace of $X$ containing $K$.
  Let $m\in \NN$, let $\overline{a} : K \to \RR^m$ be a map which is continuous
  with respect to some norm $\Vert \cdot \Vert_V$ on $V$ and $M : \RR^m \to X$ some arbitrary map.
  Let
  \begin{equation} \label{eq:bmkx}
    b_m(K)_X
    \defeq \underset{X_{m+1}}{\sup}
             \sup
               \left\{\varrho \geq 0: \  U_\varrho(X_{m+1}) \subseteq K\right\},
  \end{equation} 
  where the first supremum is taken over all $(m+1)$-dimensional linear subspaces $X_{m+1}$ of $X$ and
  \begin{equation*}
    U_\varrho(X_{m+1})
    \defeq \{y \in X_{m+1} : \ \Vert y \Vert_X \leq \varrho\}.
  \end{equation*}
  Further, we set $b_m(K)_X \defeq 0$ if the supremum in \eqref{eq:bmkx}
  is not well-defined as a quantity in $[0, \infty]$.
  Then it holds
  \begin{equation*}
    \underset{x \in K}{\sup} \Vert x - M (\overline{a}(x)) \Vert_X \geq b_m(K)_X.
  \end{equation*}
\end{proposition}

\begin{proof}
The claim is trivial if $b_m(K)_X = 0$.
Thus, assume $b_m(K)_X > 0$.
Let $0 < \varrho \leq b_m(K)_X$ be any number such that there exists
an $(m+1)$-dimensional subspace $X_{m+1}$ of $X$ with $ U_\varrho(X_{m+1}) \subseteq K$.
It follows $U_\varrho(X_{m+1}) \subseteq V$, hence $X_{m+1} \subseteq V$,
so $\Vert \cdot \Vert_V$ defines a norm on $X_{m+1}$.
Thus, the restriction of $\overline{a}$ to $\partial  U_\varrho(X_{m+1})$ is a continuous mapping
to $\RR^m$ with respect to $\Vert \cdot \Vert_V$.
Since all norms are equivalent on the finite-dimensional space $X_{m+1}$,
the Borsuk-Ulam-Theorem \cite[Corollary 4.2]{deimling2013nonlinear}
yields the existence of a point $x_0 \in \partial U_\varrho(X_{m+1})$
with $\overline{a}(x_0) = \overline{a}(-x_0)$.
We then see
\begin{align*}
  2\varrho
  &=    2 \Vert x_0 \Vert_X
   \leq \Vert x_0 - M(\overline{a}(x_0)) \Vert_X
        + \Vert x_0 + M(\overline{a}(-x_0)) \Vert_X \\
  &=    \Vert x_0 - M (\overline{a}(x_0)) \Vert_X
        + \Vert - x_0 - M(\overline{a}(-x_0)) \Vert_X,
\end{align*}
and hence at least one of the two summands on the right has to be larger than or equal to $\varrho$.
\end{proof}
\subsection{Proof of Theorem \ref{thm:opti_conti}}
\label{optimality_section_reordered}

Using \Cref{thm: devore}, we can deduce our lower bound in the context of $C^k$-spaces.
The proof is in fact almost identical to what is done in \cite[Theorem~4.2]{devore_optimal_1989}.
However, we decided to include a detailed proof in this paper,
since \cite{devore_optimal_1989} considers Sobolev functions and not $C^k$-functions.

\begin{theorem} \label{app: devore_real}
Let $s,k \in \NN$. Then there exists a constant $c = c(s,k)>0$ with the following property:  For any $m \in \NN$ and any map $\overline{a} : C^k ([-1,1]^s; \RR) \to \RR^m$ that is continuous with respect to some norm on $C^k([-1,1]^s; \RR)$ and any (possibly discontinuous) map $M  : \RR^m \to C([-1,1]^s ; \RR)$, we have
\begin{equation*}
\underset{\Vert f \Vert _{C^k ([-1,1]^s ; \RR)} \leq 1}{\underset{f \in C^k([-1,1]^s ; \RR)}{\sup}} \Vert f - M (\overline{a}(f)) \Vert_{L^\infty ([-1,1]^s ; \RR)} \geq c \cdot m^{-k/s}.
\end{equation*}
\end{theorem}
\begin{proof}
The idea is to apply \Cref{thm: devore} to $X \defeq C([-1,1]^s ; \RR)$, $V \defeq C^k ([-1,1]^s ; \RR)$ and the set $K \defeq \{f \in C^k ([-1,1]^s ; \RR): \  \Vert f \Vert_{C^k ([-1,1]^s ; \RR)} \leq 1\}$. 

Assume in the beginning that $m = n^s$ with an integer $n >1$. Pick $\phi \in C^\infty(\RR^s)$ with $\phi \equiv 1$ on $[-3/4, 3/4]^s$ and $\phi \equiv 0$ outside of $[-1,1]^s$. Fix $c_0 = c_0(s,k) > 0$ with 
\begin{equation*}
1 \leq \Vert \phi \Vert_{C^k([-1,1]^s ; \RR) } \leq c_0.
\end{equation*} 
Let $Q_1, ..., Q_m$ be the partition (disjoint up to null-sets) of $[-1,1]^s$ into closed cubes of sidelength $2/n$. For every $j \in \{1,...,m\}$ we write $Q_j = \bigtimes_{\ell = 1}^{s} [a_\ell^{(j)} - 1/n, a_\ell^{(j)} + 1/n]$ with an appropriately chosen vector $a = (a_1^{(j)}, ..., a_s^{(j)}) \in [-1,1]^s$ and let 
\begin{equation*}
\phi_j (x) \defeq \phi (n(x - a^{(j)})) \text{ for } x \in \RR^s.
\end{equation*}
By choice of $\phi$, the maps $\phi_j$ are supported on a proper subset of $Q_j$ for every $j \in \{1,...,m\}$ and an inductive argument shows
\begin{equation*} 
\partial^\kk \phi_j (x) = n^{\vert \kk \vert} \cdot (\partial^\kk \phi) (n(x - a^{(j)})) \quad\text{for every } \kk \in \NN_0^s \text{ and }x \in \RR^s
\end{equation*}
and hence in particular
\begin{equation} \label{eq: derivative}
\Vert \phi_j \Vert_{C^k([-1,1]^s ; \RR)} \leq n^{\vert \kk \vert} \cdot c_0.
\end{equation}
Let $X_m \defeq \spann \{\phi_1, ..., \phi_m\}$ and $S \in U_1(X_m) = \{f \in X_m: \ \Vert f \Vert_{L^\infty([-1,1]^s;\RR)}\leq 1\} $. Then we can write $S$ in the form $S= \sum_{j = 1}^m c_j \phi_j$ with real numbers $c_1, ..., c_m \in \RR$. Suppose there exists $j^* \in \{1,...,m\}$ with $\vert c_{j^*}\vert  > 1$. Then we have
\begin{equation*}
\Vert S \Vert_{L^\infty([-1,1]^s ; \RR)} \geq \vert S(a^{(j^*)})\vert \geq \vert c_{j^*} \vert> 1,
\end{equation*}
since the functions $\phi_j$ have disjoint support and $\phi_j(a^{(j)}) = 1$. This is a contradiction to $S \in U_1(X_m)$ and we can thus infer that $\underset{j}{\max} \ \vert c_j \vert \leq 1$. Furthermore, we see again because the functions $\phi_j$ have disjoint support that
\begin{align*}
\Vert \partial^\kk S \Vert_{L^\infty ([-1,1]^s ; \RR)} \leq \underset{j}{\max} \ \vert c_j \vert \cdot \Vert \partial^\kk \phi_j \Vert_{L^\infty([-1,1]^s ; \RR)} \overset{\eqref{eq: derivative}}{\leq} n^{\vert \kk \vert} \cdot c_0 \leq c_0 \cdot n^k = c_0 \cdot m^{k/s}
\end{align*}
for every $\kk \in \NN_0^s$ with $\vert \kk \vert \leq k$ and hence
\begin{equation*}
\Vert S \Vert_{C^k ([-1,1]^s ; \RR)} \leq c_0 \cdot m^{k/s}.
\end{equation*}
Thus, letting $\varrho \defeq c_0^{-1} \cdot m^{-k/s}$ yields $ U_\varrho(X_m) \subseteq K$, so we see by \Cref{thm: devore} that
\begin{equation*}
\underset{f \in K}{\sup} \Vert f - M_{m-1} (\overline{a}(f)) \Vert_{L^\infty([-1,1]^s ; \RR)} \geq \varrho = c_1 \cdot m^{-k/s}
\end{equation*} 
with $c_1 = c_0^{-1}$ for every map $\overline{a} : X \to \RR^{m-1}$ which is continuous with respect to some norm on $V$ and any map $M_{m-1}: \RR^{m-1} \to X$. Using the inequality $m \leq 2(m-1)$ (note $m>1$) we get
\begin{align*}
\underset{f \in K}{\sup} \Vert f - M_{m-1} (\overline{a}(f)) \Vert_{L^\infty([-1,1]^s ; \RR)} \geq c_1 \cdot m^{-k/s} \geq c_1 \cdot (2(m-1))^{-k/s} \geq c_2 \cdot (m-1)^{-k/s}
\end{align*}
with $c_2 = c_1 \cdot 2^{-k/s}$. Hence, the claim has been shown for all numbers $m$ of the form $n^s - 1$ with an integer $n >1$.

In the end, let $m \in \NN$ be arbitrary and pick $n \in \NN$ with $n^s \leq m < (n+1)^s$. For given maps $\overline{a} : V \to \RR^m$ and $M: \RR^m \to X$ with $\overline{a}$ continuous with respect to some norm on $V$, let
\begin{align*}
\tilde{a}:& \quad V \to \RR^{(n+1)^s-1}, \quad f \mapsto (\overline{a}(f), 0) \quad \text{and}\\ 
 M_{(n+1)^s-1}: &\quad \RR^{(n+1)^s-1}\to X, \quad (x,y) \mapsto M(x),
\end{align*}
where $x \in \RR^m, \  y \in \RR^{(n+1)^s -1 -m}$. Then we get
\begin{align*}
\underset{f \in K}{\sup} \Vert f - M (\overline{a}(f)) \Vert_{L^\infty([-1,1]^s ; \RR)} &= \underset{f \in K}{\sup} \Vert f - M_{(n+1)^s - 1} (\tilde{a}(f)) \Vert_{L^\infty([-1,1]^s ; \RR)} 
\\
&\geq c_2 \cdot ((n+1)^s - 1)^{-k/s} 
\geq c_2 \cdot (2^s n^s)^{-k/s} 
\geq c_3 \cdot m^{-k/s}
\end{align*}
with $c_3 = c_2 \cdot 2^{-k}$. Here we used the bound $(n+1)^s - 1 \leq (2n)^s$. This proves the full claim. \qedhere
\end{proof}
Using this theorem, we can now prove \Cref{thm:opti_conti}.

\begin{proof}[Proof of \Cref{thm:opti_conti}]
Let $\overline{a}: C^k(\Omega_n ; \CC) \to \CC^m$ be any map that is continuous with respect to some norm $\Vert \cdot \Vert_V$ on $C^k(\Omega_n; \CC)$, and let $M: \CC^m \to C(\Omega_n;\CC)$ be arbitrary. With $\varphi_n, \varphi_m$ defined as in Equation \eqref{isomorphism_intro}, let
\begin{equation*}
\tilde{a}: \quad C^k ([-1,1]^{2n}; \RR) \to \RR^{2m}, \quad \tilde{f} \mapsto \varphi_m^{-1} \left( \overline{a} \left( \tilde{f} \circ \fres{\varphi_n^{-1}}{\Omega_n}\right)\right).
\end{equation*}
Clearly, $\tilde{a}$ is continuous on $C^k([-1,1]^{2n}; \RR)$ with respect to the norm $\Vert \cdot \Vert_{\tilde{V}}$ on $C^k([-1,1]^{2n}; \RR)$ defined as
\begin{equation*}
\Vert \tilde{f} \Vert_{\tilde{V}} \defeq \left\Vert \tilde{f} \circ \fres{\varphi_n^{-1}}{\Omega_n}\right\Vert_V \quad \text{for } \tilde{f} \in C^k([-1,1]^{2n}; \RR).
\end{equation*}
Let
\begin{equation*}
\widetilde{M}: \quad \RR^{2m} \to C([-1,1]^{2n}; \RR), \quad \widetilde{M}(x) \defeq \RE(M(\varphi_m(x))) \circ \fres{\varphi_n}{[-1,1]^{2n}}.
\end{equation*}
Then it holds
\begin{align*}
&\norel \underset{\Vert f \Vert _{C^k (\Omega_n ; \CC)} \leq 1}{\underset{f \in C^k(\Omega_n ; \CC)}{\sup}} \Vert f - M (\overline{a}(f)) \Vert_{L^\infty (\Omega_n; \CC)} \\
&\geq \underset{\Vert f \Vert _{C^k (\Omega_n ; \RR)} \leq 1}{\underset{f \in C^k(\Omega_n ; \RR)}{\sup}} \Vert f - \RE(M (\overline{a}(f))) \Vert_{L^\infty (\Omega_n; \RR)} \\
&= \underset{\Vert \tilde{f} \Vert _{C^k ([-1,1]^{2n} ; \RR)} \leq 1}{\underset{\tilde{f} \in C^k([-1,1]^{2n} ; \RR)}{\sup}} \left\Vert \tilde{f} \circ \varphi_n^{-1} - \RE\left(M \left(\overline{a}\left(\tilde{f} \circ \fres{\varphi_n^{-1}}{\Omega_n}\right)\right)\right) \right\Vert_{L^\infty (\Omega_n; \RR)} \\
&= \underset{\Vert \tilde{f} \Vert _{C^k ([-1,1]^{2n} ; \RR)} \leq 1}{\underset{\tilde{f} \in C^k([-1,1]^{2n} ; \RR)}{\sup}} \left\Vert \tilde{f} - \RE\left(M \left(\varphi_m\left(\varphi_m^{-1}\left(\overline{a}\left(\tilde{f} \circ \fres{\varphi_n^{-1}}{\Omega_n}\right)\right)\right)\right)\right) \circ \varphi_n \right\Vert_{L^\infty ([-1,1]^{2n}; \RR)} \\
&= \underset{\Vert \tilde{f} \Vert _{C^k ([-1,1]^{2n} ; \RR)} \leq 1}{\underset{\tilde{f} \in C^k([-1,1]^{2n} ; \RR)}{\sup}} \left\Vert \tilde{f} - \widetilde{M}(\tilde{a}(\tilde{f})) \right\Vert_{L^\infty ([-1,1]^{2n}; \RR)} \geq \tilde{c} \cdot (2m)^{-k/(2n)},\\
\end{align*}
with a constant $\tilde{c} = \tilde{c}(n,k)$ provided by \Cref{app: devore_real}. Hence, the claim follows by choosing $c = c(n,k) \defeq 2^{-k/(2n)} \cdot \tilde{c}$.
\end{proof}

As a corollary, we formulate a special case of \Cref{thm:opti_conti} for the case of shallow complex-valued neural networks.

\begin{corollary}
\label{main_optimality}
    Let $n,k \in \NN$. Then there exists a constant $c=c(n,k) > 0$ with the following property: For any $m \in \NN$, $\phi \in C(\CC; \CC)$ and any map
    \begin{equation*}
        \eta : \quad C^k \left( \Omega_n ; \CC\right) \to \left(\CC^n\right)^m \times \CC^m \times \CC^m, \quad g \mapsto \left(\eta_1(g), \eta_2(g), \eta_3(g)\right)
    \end{equation*}
which is continuous with respect to some norm on $C^k (\Omega_n ; \CC)$, there exists $f \in C^k\left(\Omega_n; \CC\right)$ satisfying $\Vert f \Vert_{C^k (\Omega_n ; \CC)} \leq 1$ and
    \begin{equation*}
        \left\Vert f - \Psi(f)\right\Vert_{L^\infty \left(\Omega_n ; \CC\right)} \geq c \cdot m^{-k/(2n)},
    \end{equation*}
    where $\Psi(f) \in C(\Omega_n; \CC)$ is given by
\begin{equation*}
\Psi(f)(z) \defeq \sum_{j=1}^m \left(\eta_3(f)\right)_j \phi \left(\left[\eta_1 (f)\right]_j^T z + \left(\eta_2(f)\right)_j\right).
\end{equation*}
\end{corollary}

\begin{proof}
Using \Cref{thm:opti_conti}, we deduce that there exists $f \in C^k(\Omega_n;\CC)$ satisfying $\Vert f \Vert_{C^k(\Omega_n;\CC)} \leq 1$ and
\begin{equation*}
\Vert f - \Psi(f) \Vert_{L^\infty(\Omega_n; \CC)} \geq c' \cdot (m(n+2))^{-k/(2n)}
\end{equation*}
for a constant $c' = c'(n,k)>0$. Hence, the claim follows by letting $c \defeq c' \cdot (n+2)^{-k/(2n)}$.
\end{proof}

\subsection{Proof of Theorem \ref{thm:intrac}} \label{sec:intrac_reordered}

We can use \Cref{thm: devore} not only to show that the rate of convergence established in this paper is optimal (which is done in \Cref{optimality_section_reordered}) but also to show that the problem of approximating $C^k$-functions using a set of functions that can be parametrized with finitely many parameters is intractable in the sense that it suffers from the curse of dimensionality, provided that the map which assigns to each $C^k$-function the parameters of the approximating function is continuous. This is the subject of this section. 

In \cite{NOVAK2009398} a certain space of polynomials was used to show the intractability in the case of \emph{linear} approximation methods. We are also going to use this class of polynomials, but combine it with \Cref{thm: devore} to infer intractability in the case of \emph{continuous} approximation methods. We start with a lemma discussing an important property of this space of polynomials.
This property is stated as part of a proof in \cite{NOVAK2009398}, but no complete proof is provided.
\begin{lemma}\label{lem:poly_class}
Let $s \in \NN$ and consider a function $f \in C^\infty([-1,1]^s; \RR)$ which is given via
\begin{equation}\label{eq:form_poly_0_1}
f(x) = \sum_{\kk \in \{0,1\}^s} a_\kk x^\kk
\end{equation}
with coefficients $a_\kk \in \RR$ for every $\kk \in \{0,1\}^s$. Then it holds
\begin{equation*}
\Vert f \Vert_{C^k ([-1,1]^s; \RR)} = \Vert f \Vert_{L^\infty([-1,1]^s; \RR)}
\end{equation*}
for every $k \in \NN$.
\end{lemma}
\begin{proof}
The proof is by induction over $s$. We start with the case $s=1$ and note that we can write $f(x) = ax + b$ with $a,b \in \RR$ in that case. Switching to $-f$ if necessary, we can assume $a \geq 0$. Clearly, $\Vert f \Vert_{L^\infty([-1,1]; \RR)} \leq \vert a \vert + \vert b \vert$. Conversely, if $b \geq 0$ then $\vert f(1) \vert = \vert a + b \vert = \vert a \vert + \vert b \vert$. If otherwise $b < 0$ then $\vert f(-1) \vert = \vert b  - a \vert = \vert a - b \vert = \vert a \vert + \vert b \vert$. Thus, $\Vert f \Vert_{L^\infty([-1,1]; \RR)} = \vert a \vert + \vert b \vert$. For the derivatives, we have $\Vert f' \Vert_{L^\infty([-1,1]; \RR)} = \vert a \vert$ and $\Vert f^{(k)} \Vert_{L^\infty([-1,1]; \RR)} = 0$ for $k\geq 2$. This proves the claim in the case $s=1$. 

We now assume that the claim holds for some arbitrary but fixed $s \in \NN$. We further let $\alpha \in \NN_0^{s+1}$ and \emph{fix} a point $(x_1, ..., x_{s+1}) \in [-1,1]^{s+1}$. We decompose $\alpha = (\alpha', \alpha_{s+1})$ with $\alpha' \in \NN_0^s$. Let
\begin{equation*}
\widetilde{f}: \quad [-1,1] \to \RR, \quad y_{s+1} \mapsto \partial^{(\alpha',0)}f(x_1, ..., x_s, y_{s+1})
\end{equation*}
and note 
\begin{equation*}
\partial^\alpha f(x_1, ..., x_{s+1}) =  \widetilde{f}^{(\alpha_{s+1})} (x_{s+1}).
\end{equation*}
Note that $f$ is affine-linear with respect to each variable (with all other variables hold fixed). Hence, $\widetilde{f}$ is an affine function and we can thus apply the case $s=1$ to $\widetilde{f}$ and get
\begin{equation*}
\Vert \widetilde{f}^{(\alpha_{s+1})} \Vert_{L^\infty([-1,1]; \RR)} \leq \Vert \widetilde{f} \Vert_{L^\infty([-1,1]; \RR)}.
\end{equation*}
Putting this together, we infer
\begin{equation}\label{eq:jj1}
\vert  \partial^\alpha f(x_1, ..., x_{s+1}) \vert \leq \Vert \widetilde{f} \Vert_{L^\infty([-1,1]; \RR)} = \underset{y_{s+1} \in [-1,1]}{\sup} \vert \partial^{(\alpha',0)}f(x_1, ..., x_s, y_{s+1})\vert.
\end{equation}
We now \emph{fix} an arbitrary point $y_{s+1} \in [-1,1]$ and consider
\begin{equation*}
\widehat{f}: \quad [-1,1]^s \to \RR, \quad (y_1, ..., y_{s}) \mapsto f(y_1, ..., y_s, y_{s+1}).
\end{equation*}
Then it holds
\begin{equation*}
\partial^{(\alpha',0)}f(x_1, ..., x_s, y_{s+1}) = \partial^{\alpha'} \widehat{f}(x_1, ..., x_s).
\end{equation*}
Applying the induction hypothesis to $\widehat{f}$ (which is easily seen to be of the form \eqref{eq:form_poly_0_1}) we get
\begin{equation}\label{eq:jj2}
\vert \partial^{\alpha'} \widehat{f}(x_1, ..., x_s)\vert \leq \Vert \widehat{f} \Vert_{L^\infty([-1,1]^s ; \RR)} \leq \Vert f \Vert_{L^\infty([-1,1]^{s+1}; \RR)}.
\end{equation}
Combining \eqref{eq:jj1} and \eqref{eq:jj2} yields
\begin{equation*}
\vert  \partial^\alpha f(x_1, ..., x_{s+1}) \vert \leq \Vert f \Vert_{L^\infty([-1,1]^{s+1}; \RR)}.
\end{equation*}
Since $\alpha \in \NN_0^{s+1}$ was arbitrary, we get the claim by noting that 
\begin{equation*}
\Vert f \Vert_{C^k ([-1,1]^s; \RR)} \geq \Vert f \Vert_{L^\infty([-1,1]^s; \RR)}
\end{equation*}
holds trivially for every $k \in \NN$.
\end{proof}
Using the above lemma, we can now deduce that the approximation of smooth functions
using continuous approximation methods is intractable in terms of the input dimension.

\begin{proof}[Proof of \Cref{thm:intrac}]
We apply \Cref{thm: devore} to $X \defeq C([-1,1]^s ; \RR)$, $V \defeq C^{\infty, \ast,s}$ and to the set $K \defeq \{f \in C^{\infty, \ast, s}: \  \Vert f \Vert_{C^\infty ([-1,1]^s ; \RR)} \leq 1\}$ and $m \defeq 2^s - 1$. The space 
\begin{equation*}
X_{m+1} \defeq \left\{ [-1,1]^s \ni x \mapsto \sum_{\kk \in \{0,1\}^s} a_\kk x^\kk: \ a_\kk \in \RR\right\}
\end{equation*}
consisting of all functions considered in the previous \Cref{lem:poly_class} is an $(m+1)$-dimensional subspace of $C([-1,1]^s ; \RR)$. For every $f \in X_{m+1}$ with $\Vert f \Vert_{L^\infty([-1,1]^s ; \RR)} \leq 1$, \Cref{lem:poly_class} tells us $\Vert f \Vert_{C^\infty([-1,1]^s ; \RR)} \leq 1$. Hence, $U_1(X_{m+1}) \subseteq K$ and \Cref{thm: devore} then yields the claim.
\end{proof}

\begin{remark}
The statement of \Cref{thm:intrac} also holds if the functions satisfy $\overline{a}: C^{\infty, *, s} \to \RR^m$ and $M: \RR^m \to C([-1,1]^s;\RR)$ with $m \leq 2^s - 1$. This can be seen by defining 
\begin{equation*}
\tilde{a} : \quad C^{\infty, *, s} \to \RR^{2^s -1}, \quad f \mapsto (\overline{a}(f), 0, ..., 0)
\end{equation*}
and
\begin{equation*}
\widetilde{M}: \quad \RR^{2^s -1} \to C([-1,1]^s;\RR), \quad (a,b) \mapsto M(a)
\end{equation*}
with $a \in \RR^m$ and $b \in \RR^{2^s -1 -m}$.
\end{remark}
\renewcommand*{\proofname}{Proof}
The following \Cref{corr:intrac_complex} transfers \Cref{thm:intrac} to the complex-valued setting.
\begin{corollary}\label{corr:intrac_complex}
Let $n \in \NN$. For any function $f \in C^\infty(\Omega_n ; \CC)$ we write
\begin{equation*}
\Vert f \Vert_{C^\infty(\Omega_n ; \CC)} \defeq \underset{k \in \NN}{\sup} \ \Vert f \Vert_{C^k(\Omega_n; \CC)}
\end{equation*}
and let $C^{\infty, *, n}_{\CC}$ denote the space consisting of all functions for which this expression is finite. Let $\overline{a}:  C^{\infty, *, n}_{\CC} \to \CC^{2^{2n-1} - 1}$ be continuous with respect to some norm on $C^{\infty, *, n}_{\CC}$ and moreover, let $M: \CC^{2^{2n-1}-1} \to C(\Omega_n; \CC)$ be an arbitrary map. Then it holds
\begin{equation*}
\underset{\Vert f \Vert_{C^\infty(\Omega_n ; \CC)} \leq 1}{\underset{f \in C^{\infty, *, n}_{\CC}}{\sup}} \Vert f - M(\overline{a}(f))\Vert_{L^\infty(\Omega_n ; \CC)} \geq 1.
\end{equation*}
\end{corollary}
\begin{proof}
The transfer to the complex-valued setting works in the same manner as the proof of \Cref{thm:opti_conti} (see \Cref{optimality_section_reordered}). We write $m \defeq 2^{2n-1}-1$ and note $2m = 2^{2n} - 2 \leq 2^{2n}-1$. We define $\tilde{a} : C^{\infty, \ast, 2n}\to \RR^{2m}$ and $\widetilde{M}: \RR^{2m} \to C([-1,1]^{2n}; \RR)$ in the same way as in the proof of \Cref{thm:opti_conti}. Using again the same technique as in the proof of \Cref{thm:opti_conti}, we get
\begin{equation*}
\underset{\Vert f \Vert _{C^\infty (\Omega_n ; \CC)} \leq 1}{\underset{f \in C^{\infty, *, n}_{\CC}}{\sup}} \Vert f - M (\overline{a}(f)) \Vert_{L^\infty (\Omega_n; \CC)} \geq  \underset{\Vert \tilde{f} \Vert _{C^\infty ([-1,1]^{2n} ; \RR)} \leq 1}{\underset{\tilde{f} \in C^{\infty, \ast, 2n}}{\sup}} \left\Vert \tilde{f} - \widetilde{M}(\tilde{a}(\tilde{f})) \right\Vert_{L^\infty ([-1,1]^{2n}; \RR)} \geq 1,
\end{equation*}
applying \Cref{thm:intrac} in the last inequality, using $2m \leq 2^{2n}-1$.
\end{proof}
We conclude this appendix by adding a note on the constant appearing in our main approximation bound.
\begin{corollary} \label{corr:const_intrac}
Let $n \in \NN$ with $n \geq 2$ and $\alpha > 0$ and let $\phi \in C(\CC;\CC)$. Let $\tilde{c} = \tilde{c}(n,\alpha)>0$ be such that for every $m \in \NN$ there exists a mapping
\begin{equation*}
        \eta : \quad  C^{\infty, *, n}_{\CC} \to \left(\CC^n\right)^m \times \CC^m \times \CC^m, \quad g \mapsto \left(\eta_1(g), \eta_2(g), \eta_3(g)\right)
\end{equation*}
that is continuous with respect to any norm on $C^{\infty, *, n}_{\CC}$ and such that
\begin{equation*}
        \left\Vert f - \Psi(f)\right\Vert_{L^\infty \left(\Omega_n ; \CC\right)} \leq \left( \tilde{c} \cdot m\right)^{-\alpha} \cdot \Vert f \Vert_{C^\infty(\Omega_n; \CC)},
    \end{equation*}
    for every $f \in C^{\infty, *, n}_{\CC}$. Here, $\Psi(f) \in C(\Omega_n; \CC)$ is given by
\begin{equation*}
\Psi(f)(z) \defeq \sum_{j=1}^m \left(\eta_3(f)\right)_j \phi \left(\left[\eta_1 (f)\right]_j^T z + \left(\eta_2(f)\right)_j\right).
\end{equation*}
 Then it necessarily holds $\tilde{c}\leq 16 \cdot 2^{-n}$.
\end{corollary}
\begin{proof}
We first assume $n \geq 4$. We take $m= \left\lfloor \frac{2^{2n - 1} - 1}{n+2} \right\rfloor$ and note that then $m(n+2) \leq 2^{2n-1}-1$. Therefore, \Cref{corr:intrac_complex} applies and we infer that for each $\eps \in (0,1)$, there exists $f = f_\eps \in C^{\infty, *, n}_{\CC}$ with $\Vert f \Vert_{C^\infty(\Omega_n;\CC)} \leq 1$ and such that
\begin{equation*}
1 - \eps \leq \left\Vert f - \Psi(f)\right\Vert_{L^\infty \left(\Omega_n ; \CC\right)} \leq \left( \tilde{c} \cdot m\right)^{-\alpha} \cdot \Vert f \Vert_{C^\infty(\Omega_n; \CC)} \leq \left( \tilde{c} \cdot m\right)^{-\alpha}.
\end{equation*}
This then necessarily implies $\tilde{c} \cdot m \leq 1$ or equivalently $\tilde{c}\leq 1/m$. It therefore suffices to derive a lower bound for $m$. Firstly, we note
\begin{equation*}
2^{2n-1} = 2^{n-3} \cdot 2^{n+2} = 2^{n-3} \cdot (1+1)^{n+2} \geq 2^{n-3}(n+3),
\end{equation*}
where we applied Bernoulli's inequality. Because of $n \geq 4 \geq 3$, this yields
\begin{equation*}
2^{2n-1} - 1 \geq 2^{n-3}(n+3) - 2^{n-3} =  2^{n-3} (n+2).
\end{equation*}
Hence, we get
\begin{equation*}
m \geq \frac{2^{2n - 1} - 1}{n+2} -1  \geq 2^{n-3} -1 = 2^{n-3}(1- 2^{3-n}) \geq 2^{n - 4} = \frac{2^n}{16}.
\end{equation*}
Here, we used $n \geq 4$ in the last inequality. An explicit computation shows that the same bounds also holds in the cases $n=2$ and $n=3$. This proves the claim. 
\end{proof}

\section{Postponed proofs for the optimality results in the case of unrestricted weight selection}

\subsection{Approximation using Ridge Functions}
\label{sec: ridge_reordered}
In this section we prove for $s \in \NN_{\geq 2}$ that every function in $C^k([-1,1]^s; \RR)$ can be uniformly approximated with an error of the order $m^{-k/(s-1)}$ using a linear combination of $m$ so-called \emph{ridge functions}. In fact, we only consider ridge \emph{polynomials}, meaning functions of the form
\begin{equation*}
\RR^s \to \RR, \quad x \mapsto p(a^T x)
\end{equation*}
for a fixed vector $a \in \RR^s$ and a polynomial $p: \RR \to \RR$. Note that this result has already been obtained in a slightly different form in \cite[Theorem 1]{maiorov_best_1999}; namely, it is shown there that the rate of approximation $m^{-k/(s-1)}$ can be achieved by functions of the form $\sum_{j=1}^m f_j(a_j^T x)$ with $a_j \in \RR^s$ and $f_j \in L^1_\text{loc} (\RR^s)$. We will need the fact that the $f_j$ can actually be chosen as polynomials and that the vectors $a_1, ..., a_m$ can be chosen independently from the particular function $f$. This is shown in the proof of \cite{maiorov_best_1999}, but not stated explicitly. For this reason, and in order to clarify the proof itself and to make the paper more self-contained, we decided to present the proof in this appendix.

\begin{lemma} \label{lem: hom_dim}
Let $m,s \in \NN$. Then we denote by
\begin{equation*}
P_m^s \defeq \left\{ \RR^s \to \RR, \quad x \mapsto \underset{\vert \kk \vert \leq m}{\sum_{\kk \in \NN_0^s}} a_\kk x^\kk : \ a_\kk \in \RR\right\}
\end{equation*}
the set of real polynomials of degree at most $m$. The subset of \emph{homogeneous} polynomials of degree $m$ is defined as
\begin{equation*}
H_m^s \defeq \left\{ \RR^s \to \RR, \quad x \mapsto \underset{\vert \kk \vert = m}{\sum_{\kk \in \NN_0^s}} a_\kk x^\kk : \ a_\kk \in \RR\right\}.
\end{equation*}
Then there exists a constant $c = c(s) > 0$ satisfying
\begin{equation*}
\dim (H_m^s) \leq c \cdot m^{s-1} \quad \forall m \in \NN.
\end{equation*}
\end{lemma}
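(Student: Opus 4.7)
The plan is to compute $\dim (H_m^s)$ exactly and then bound it. Since a basis of $H_m^s$ is given by the monomials $x^\kk$ for $\kk \in \NN_0^s$ with $\vert \kk \vert = m$, a standard stars-and-bars argument yields
\begin{equation*}
\dim (H_m^s) = \binom{m+s-1}{s-1}.
\end{equation*}
I would state this equality first, as it reduces the lemma to a purely combinatorial estimate.

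Next, I would bound the binomial coefficient. Writing
\begin{equation*}
\binom{m+s-1}{s-1} = \prod_{j=1}^{s-1} \frac{m+j}{j} \leq \prod_{j=1}^{s-1} (m+j) \leq (m+s-1)^{s-1},
\end{equation*}
and then using $m+s-1 \leq s \cdot m$ for $m \geq 1$, one obtains
\begin{equation*}
\dim (H_m^s) \leq s^{s-1} \cdot m^{s-1},
\end{equation*}
so the choice $c \defeq s^{s-1}$ works.

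There is essentially no obstacle here; the argument is routine once the dimension formula is identified. The only minor point of care is the convention on $\NN$: the assertion requires $m \geq 1$ (which is the convention in this paper), since for $m = 0$ one has $\dim (H_0^s) = 1$ but $m^{s-1} = 0$. Under the convention $\NN = \{1,2,\dots\}$ adopted throughout the excerpt, no adjustment is necessary.
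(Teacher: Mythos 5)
Your proof is correct and follows essentially the same route as the paper's: identify the monomial basis, compute $\dim(H_m^s)$ as the binomial coefficient $\binom{m+s-1}{s-1} = \prod_{j=1}^{s-1}\frac{m+j}{j}$, and then bound the product by a constant (depending only on $s$) times $m^{s-1}$. The only difference is the final estimate: the paper bounds each factor as $1+\tfrac{m}{j}\leq 1+m\leq 2m$ to obtain $c=2^{s-1}$, whereas you drop the denominators and use $m+j\leq sm$ to obtain $c=s^{s-1}$; both are valid since any $c=c(s)$ suffices.
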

\begin{proof}
It is immediate that the set
\begin{equation*}
\left\{ \RR^s \to \RR, \ x \mapsto x^\kk: \  \kk \in \NN_0^s, \ \vert \kk \vert =m\right\}
\end{equation*}
forms a basis of $H_m^s$, hence 
\begin{equation*}
\dim (H_m^s) = \# \left\{ \kk \in \NN_0^s: \ \vert \kk \vert = m\right\}.
\end{equation*}
This quantity clearly equals the number of possibilities for drawing $m$ times from a set with $s$ elements with replacement. Hence, we see
\begin{equation*}
\dim (H_m^s) = \binom{s+m-1}{m},
\end{equation*}
see for instance \cite[Identity 143]{benjamin_proofs_2003}. A further estimation shows
\begin{align*}
 \binom{s+m-1}{m} = \prod_{j=1}^{s-1} \frac{m+j}{j} = \prod_{j=1}^{s-1} \left(1 + \frac{m}{j}\right) \leq (1+m)^{s-1} \leq 2^{s-1} \cdot m^{s-1}.
\end{align*}
Hence, the claim follows with $c(s) = 2^{s-1}$.
\end{proof}

A combination of results from \cite{pinkus_ridge_2016} together with the fact that it is possible to approximate $C^k$-functions using polynomials of degree at most $m$ with an error of the order $m^{-k}$, as shown in \Cref{app: fourier_approx}, yields the desired result.

\begin{theorem} \label{app: ridge}
Let $s,k \in \NN$ with $s \geq 2$ and $r>0$. Then there exists a constant $c = c(s,k) > 0$ with the following property: For every $m \in \NN$ there exist $a_1, ..., a_m \in \RR^s \setminus \{0\}$ with $\Vert a_j \Vert_2 = r$, such that for every function $f \in C^k ([-1,1]^s ; \RR)$ there exist polynomials $p_1, ..., p_m \in P_m^1$ satisfying
\begin{equation*}
\left\Vert f(x) - \sum_{j=1}^m p_j (a_j^T x) \right\Vert_{L^\infty ([-1,1]^s ; \RR)} \leq c \cdot m^{-k/(s-1)} \cdot \Vert f \Vert_{C^k([-1,1]^s ; \RR)}.
\end{equation*}  
\end{theorem}
\begin{proof}
We first pick the constant $c_1 = c_1(s) $ according to \Cref{lem: hom_dim}. Then we define the constant $c_2 = c_2(s) \defeq (2s)^{s-1} \cdot c_1(s)$ and let $M \in \NN$ be the largest integer satisfying 
\begin{equation*}
c_2 \cdot M^{s-1} \leq m.
\end{equation*}
Here, we assume without loss of generality that $m \geq c_2$, which can be justified by choosing $p_j = 0$ for every $j \in \{1,...,m\}$ if $m < c_2$, at the cost of possibly enlarging $c$. Note that the choice of $M$ implies $c_2 \cdot (2M)^{s-1} \geq c_2 \cdot (M+1)^{s-1}> m$, and thus
\begin{equation} \label{eq: bound_lower}
M \geq \frac{1}{2} \cdot c_2^{-1/(s-1)} \cdot m^{1/(s-1)} = c_3 \cdot m^{1/(s-1)}
\end{equation}
with $c_3 = c_3(s) \defeq 1/2 \cdot c_2^{-1/(s-1)}$. 

Using \cite[Proposition 5.9]{pinkus_ridge_2016} and \Cref{lem: hom_dim} we can pick $a_1, ..., a_m \in \RR^s \setminus \{0\}$ satisfying
\begin{equation} \label{span: homogeneous}
H_{s(2M-1)}^s = \spann\left\{x \mapsto (a_j^T x )^{s(2M-1)}: \ j \in \{1,...,m\}\right\},
\end{equation}
where we used that
\begin{equation*}
c_1 \cdot (s(2M-1))^{s-1} \leq c_1 \cdot (2s)^{s-1} \cdot M^{s-1} = c_2 \cdot M^{s-1} \leq m. 
\end{equation*}
Here we can assume $\Vert a_j \Vert_2 = r$ for every $j \in \{1,...,m\}$ since multiplying each $a_j$ with a positive constant does not change the span in \eqref{span: homogeneous}.
From \cite[Corollary 5.12]{pinkus_ridge_2016} we infer that
\begin{equation} \label{eq: p_span}
P_{s(2M-1)}^s = \spann\left\{x \mapsto (a_j^T x )^{r}: \ j \in \{1,...,m\}, \ 0 \leq r \leq s(2M-1)\right\}.
\end{equation}

Let $f \in C^k([-1,1]^s ; \RR)$. Then, according to \Cref{app: fourier_approx}, there exists a polynomial $P : \RR^s \to \RR$ of \emph{coordinatewise} degree at most $2M -1$ satisfying
\begin{equation*}
\Vert f - P \Vert_{L^\infty ([-1,1]^s ; \RR)} \leq c_4 \cdot M^{-k} \cdot \Vert f \Vert_{C^k([-1,1]^s ; \RR)},
\end{equation*}
where $c_4 = c_4(s,k) > 0$. Note that by construction it holds $P \in P_{s(2M-1)}^s$. Using \eqref{eq: p_span} we deduce the existence of polynomials $p_1, ..., p_m : \RR \to \RR$ such that
\begin{equation*}
P(x) = \sum_{j= 1}^m p_j(a_j^T x) \quad \text{for all }x \in \RR^s. 
\end{equation*}
Combining the previously shown bounds, we get
\begin{align*}
\left\Vert f(x) - \sum_{j= 1}^m p_j(a_j^T x) \right\Vert_{L^\infty ([-1,1]^s ; \RR)} &= \Vert f(x) - P(x)\Vert_{L^\infty ([-1,1]^s ; \RR)} \leq  c_4 \cdot M^{-k} \cdot \Vert f \Vert_{C^k([-1,1]^s ; \RR)} \\ \overset{\eqref{eq: bound_lower}}&{\leq} c \cdot m^{-k/(s-1)} \cdot \Vert f \Vert_{C^k([-1,1]^s ; \RR)},
\end{align*}
as desired. Here, we defined $c = c(s,k) \defeq c_4 \cdot c_3^{-k}$.
\end{proof}

\subsection{Proof of Theorem \ref{main_4}}
\label{sec:main_4_reordered}

Using \Cref{app: ridge}, we can prove the following statement for complex-valued $C^k$-functions,
which will play an important role in the proof of \Cref{main_4}.

\begin{proposition}\label{ridge_approx}
    Let $n,k \in \NN$. Then there exists a constant $c=c(n,k)>0$ with the following property: For any $m \in \NN$ there exist complex vectors $b_1, ..., b_m \in \CC^n$ with $\left\Vert b_j \right\Vert_2 = 1/ \sqrt{2n}$ for $j = 1,...,m$ and with the property that for any function $f \in C^k \left(\Omega_n ; \CC\right)$ there exist functions $g_1, ..., g_m \in C(\Omega_1; \CC)$ such that
    \begin{equation*}
        \left\Vert f(z) - \sum_{j=1}^m g_j \left( b_j^T \cdot z \right)\right\Vert_{L^\infty \left(\Omega_n; \CC\right)} \leq c \cdot m^{-k /(2n-1)} \cdot \left\Vert f \right\Vert_{C^k \left( \Omega_n; \CC\right)}.
    \end{equation*}
    Note that the vectors $b_1, ... b_m$ can be chosen independently from the considered function $f$, whereas $g_1, ..., g_m$ do depend on $f$.
\end{proposition}

\begin{proof}
     \Cref{app: ridge} yields the existence of a constant $c_1 = c_1(n,k)>0$ with the property that for any $m \in \NN$ there exist real vectors $a_1, ..., a_m \in \RR^{2n}$ with $\left\Vert a_j \right\Vert_2 = 1 / \sqrt{2n}$ such that for any function $\tilde{f} \in C^k \left([-1,1]^{2n}; \RR\right)$ there exist functions $\tilde{g}_1, ..., \tilde{g}_m \in C([-1,1]; \RR)$ satisfying
    \begin{equation*}
        \left\Vert \tilde{f}(x) - \sum_{j=1}^m \tilde{g}_j \left( a_j^T  x \right)\right\Vert_{L^\infty \left([-1,1]^{2n}; \RR\right)} \leq c_1 \cdot m^{-k /(2n-1)} \cdot \Vert \tilde{f} \Vert_{C^k \left( [-1,1]^{2n}; \RR\right)}.
    \end{equation*}
    We then define the vectors $b_1, ..., b_m \in \CC^n$ componentwise via
    \begin{equation*}
        \left(b_j\right)_\ell \defeq \left(a_j\right)_\ell - i \cdot \left(a_j\right)_{n+\ell}, \quad \ell \in \{1,...,n\}, \ j \in \{1,...,m\}.
    \end{equation*}
    First we see $\left\Vert b_j \right\Vert_2 = \left\Vert a_j \right\Vert_2 = 1/\sqrt{2n}$. We first consider real-valued functions, i.e., $f \in C^k \left(\Omega_n; \RR\right)$. Let $\varphi_n$ be defined as in \eqref{isomorphism_intro}. By the choice of the constant $c_1$ we can find continuous functions $\tilde{g}_1,..., \tilde{g}_m \in C \left([-1,1]; \RR\right)$ such that
    \begin{equation*}
        \left\Vert (f \circ \varphi_n) (x)- \sum_{j=1}^m \tilde{g}_j \left( a_j^T x \right)\right\Vert_{L^\infty \left([-1,1]^{2n}\right)} \leq c_1 \cdot m^{-k /(2n-1)} \cdot \left\Vert f \circ \varphi_n \right\Vert_{C^k \left( [-1,1]^{2n}; \RR\right)}.
    \end{equation*}
    We then define $g_j \in C(\Omega_1; \RR)$ by $g_j(z) \defeq \tilde{g}_j \left(\RE(z)\right)$ for any $j \in \{1, ..., m\}$. For $z \in \Omega_n$ we then have
    \begin{align}
        g_j \left(\left( b_j \right)^T z \right) &= \tilde{g}_j \left( \RE \left(\sum_{\ell = 1}^n \left( b_j\right)_\ell \cdot z_\ell\right)\right) \nonumber \\
        &= \tilde{g}_j \left( \RE \left( \sum_{\ell = 1}^n \left(\left(a_j\right)_\ell - i\cdot \left(a_j\right)_{n+\ell}\right)\left(\varphi_n^{-1}(z)_\ell + i\cdot \varphi_n^{-1}(z)_{n+\ell}\right)\right)\right) \nonumber \\
        &= \tilde{g}_j \left(\sum_{\ell = 1}^n\left[\left(a_j\right)_\ell \varphi_n^{-1}(z)_\ell + \left(a_j\right)_{n+\ell} \varphi_n^{-1}(z)_{n+\ell}\right]\right) \nonumber \\
        \label{trafo_ident}
        &= \tilde{g}_j \left( \left( a_j \right)^T \cdot \varphi_n^{-1}(z)\right).
    \end{align}
    Therefore, 
    \begin{align*}
        \left\Vert f(z) - \sum_{j=1}^m g_j \left( b_j^T z\right)\right\Vert_{L^\infty \left(\Omega_n; \RR\right)} &= \left\Vert (f \circ \varphi_n)(x) - \sum_{j=1}^m g_j \left( b_j^T \cdot \varphi_n(x) \right) \right\Vert_{L^\infty \left([-1,1]^{2n}; \RR\right)} \\
        \overset{\text{(\ref{trafo_ident})}}&{=} \left\Vert (f \circ \varphi_n)(x) - \sum_{j=1}^m \tilde{g}_j \left( a_j^T \cdot x \right)\right\Vert_{L^\infty \left([-1,1]^{2n} ; \RR\right)} \\
        &\leq c_1 \cdot m^{-k /(2n-1)} \cdot \left\Vert f \circ \varphi_n\right\Vert_{C^k \left( [-1,1]^{2n}; \RR\right)}.
    \end{align*}
    By the above, for $f \in C^k \left(\Omega_n ; \CC\right)$ we can pick functions $g^{\RE}_1, ..., g^{\RE}_m, g^{\IM}_1, ..., g^{\IM}_m \in C \left(\Omega_1 ; \RR \right)$ satisfying
    \begin{align*}
        \left\Vert \RE(f(z)) - \sum_{j=1}^m g^{\RE}_j \left( b_j^T z \right)\right\Vert_{L^\infty \left(\Omega_n; \RR\right)} &\leq c_1 \cdot m^{-k /(2n-1)} \cdot \left\Vert {\RE} \left(f \circ \varphi_n \right)\right\Vert_{C^k \left( [-1,1]^{2n}; \RR\right)}, \\
        \left\Vert \IM(f(z)) - \sum_{j=1}^m g^{\IM}_j \left( b_j^T z \right)\right\Vert_{L^\infty \left(\Omega_n; \RR\right)} &\leq c_1 \cdot m^{-k /(2n-1)} \cdot \left\Vert \IM \left(f \circ \varphi_n \right)\right\Vert_{C^k \left( [-1,1]^{2n}; \RR\right)}.
    \end{align*}
    Defining $g_j := g^{\RE}_j + i \cdot g^{\IM}_j$ yields
    \begin{equation*}
        \left\Vert f(z)- \sum_{j=1}^m g_j \left(b_j^T z\right)\right\Vert_{L^\infty \left( \Omega_n; \CC\right)} \leq c_1 \cdot \sqrt{2}\cdot m^{-k /(2n-1)} \cdot \left\Vert f \right\Vert_{C^k \left( \Omega_n; \CC\right)},
    \end{equation*}
    completing the proof.
\end{proof}
The special activation function that yields the improved approximation rate of $m^{-k/(2n-1)}$ (see \Cref{main_4}) is constructed in the following lemma. 
\begin{lemma}
\label{special_acti_func}
    Let $\left\{ u_\ell\right\}_{\ell = 1}^\infty$ be an enumeration of the set of complex polynomials in $z$ and $\overline{z}$ with coefficients in $\QQ + i\QQ$.
    Then there exists a function $\phi \in C^\infty \left( \CC; \CC\right)$ with the following properties:
    \begin{enumerate}
        \item For every $\ell \in \NN$ and $z \in \Omega_1$ one has
        \begin{equation*}
            \phi(z+3\ell) = u_\ell(z).
        \end{equation*}
        \item $\phi$ is non-polyharmonic.
    \end{enumerate}
\end{lemma}
\begin{proof}
    Let $\psi \in C^\infty\left(\CC; \RR\right)$ with $0 \leq \psi \leq 1$ and
    \begin{equation*}
        \fres{\psi}{\Omega_1} \equiv 1, \qquad \supp(\psi) \subseteq \widetilde{\Omega},
    \end{equation*}
    where $\widetilde{\Omega} \defeq \left\{ z \in \CC : \ \left\vert \RE \left(z\right)\right\vert, \left\vert \IM \left(z\right) \right\vert < \frac{3}{2} \right\}$. We then define
    \begin{equation*}
        \phi \defeq f \cdot \psi + \sum_{\ell = 1}^\infty u_\ell(\bullet - 3\ell) \cdot \psi(\bullet - 3\ell),
    \end{equation*}
    where $f(z) = e^{\RE(z)}$. Note that $\phi$ is smooth since it is a locally finite sum of smooth functions. Furthermore, $\phi$ is non-polyharmonic on the interior of $\Omega_1$, since the calculation in the proof of \Cref{admissible} shows for $z$ in the interior of $\Omega_1$ and $\rho: \RR \to \RR, \ t \mapsto e^t$ that 
\begin{equation*}
\left\vert \wirt^m \wirtq^\ell \phi (z)\right\vert = \left\vert \wirt^m \wirtq^\ell f (z)\right\vert = \frac{1}{2^{m+\ell}} \left\vert \rho^{(m+ \ell)}(\RE(z))\right\vert>0
\end{equation*}
for arbitrary $m, \ell \in \NN_0$. Finally, property (1) follows directly by construction of $\phi$ because 
    \begin{equation*}
        (\widetilde{\Omega} + 3\ell) \cap (\widetilde{\Omega} + 3\ell') = \emptyset
    \end{equation*}
    for $\ell \neq \ell'$.
\end{proof}

Using the properties of the special activation function constructed in \Cref{special_acti_func}
and applying the approximation result from \Cref{ridge_approx} we can now prove \Cref{main_4}.

\begin{proof}[Proof of \Cref{main_4}]
    Let $\phi$ be the activation function constructed in \Cref{special_acti_func}. We choose the constant $c$ according to Proposition \ref{ridge_approx}. Let $m \in \NN$ and $f \in C^k \left(\Omega_n; \CC\right)$. We can without loss of generality assume that $f \not\equiv 0$. Again, according to \Cref{ridge_approx}, we can choose $\rho_1, ..., \rho_m \in \CC^n$ with $\left\Vert_2 \rho_j \right\Vert = 1 / \sqrt{2n}$ and $g_1, ..., g_m \in C\left(\Omega; \CC\right)$ with the property
    \begin{equation*}
        \left\Vert f(z) - \sum_{j=1}^m g_j \left( \rho_j^T z \right)\right\Vert_{L^\infty \left(\Omega_n\right)} \leq c \cdot m^{-k /(2n-1)} \cdot \left\Vert f \right\Vert_{C^k \left( \Omega_n; \CC\right)}.
    \end{equation*}
    Recall from \Cref{special_acti_func} that $\{u_\ell\}_{\ell = 1}^\infty$ is an enumeration of the set of complex polynomials in $z$ and $\overline{z}$. Hence, using the complex version of the Stone-Weierstraß-Theorem (see for instance \cite[Theorem 4.51]{folland_real_1999}), we can pick $\ell_1, ..., \ell_m \in \NN$ such that
    \begin{equation}
    \label{approxx}
        \left\Vert g_j - u_{\ell_j} \right\Vert_{L^\infty \left(\Omega_1 ; \CC\right)} \leq  m^{-1-k /(2n-1)} \cdot \left\Vert f \right\Vert_{C^k \left( \Omega_n; \CC\right)}
    \end{equation}
    for every $j \in \{1,...,m\}$. Since $\phi\left(\bullet + 3\ell\right) = u_\ell$ on $\Omega_1$ for each $\ell \in \NN$, and since $\rho_j^T z \in \Omega_1$ for $j \in \{1,...,m\}$ and $z \in \Omega_n$, we estimate
    \begin{align*}
        & \norel
\left\Vert f(z) - \sum_{j=1}^m \phi \left( \rho_j^T z + 3\ell_j\right)\right\Vert_{L^\infty \left(\Omega_n; \CC\right)} \\
        &\leq \left\Vert f(z) - \sum_{j=1}^m g_j \left( \rho_j^T z \right)\right\Vert_{L^\infty \left(\Omega_n; \CC\right)} + \sum_{j=1}^m \left\Vert g_j \left( \rho_j^T z\right) - \phi \left( \rho_j^T \cdot z + 3\ell_j\right)\right\Vert_{L^\infty \left(\Omega_n; \CC\right)} \\
        &\leq c \cdot m^{-k /(2n-1)} \cdot \left\Vert f \right\Vert_{C^k \left( \Omega_n; \CC\right)} + \sum_{j=1}^m \left\Vert g_j \left( z\right) - u_{\ell_j} \left( z\right)\right\Vert_{L^\infty \left(\Omega_1; \CC\right)} \\
        \overset{\eqref{approxx}}&{\leq}  c \cdot m^{-k /(2n-1)} \cdot \left\Vert f \right\Vert_{C^k \left( \Omega_n; \CC\right)} + m^{-k /(2n-1)} \cdot \left\Vert f \right\Vert_{C^k \left( \Omega_n; \CC\right)} \\
        &= (c+1) \cdot m^{-k/(2n-1)} \cdot \Vert f \Vert_{C^k \left(\Omega_n; \CC\right)}.
\qedhere
    \end{align*}
\end{proof}

\subsection{Proof of Theorem \ref{main_5}}
As a preparation for the proof of \Cref{main_5}, we first prove a similar result in the real-valued setting. We remark that the proof idea is inspired by the proof of \cite[Theorem 4]{yarotsky_error_2017}.
\label{sec:main_5_reordered}

\begin{theorem}
\label{sigmoidoptimality}
Let $n, k \in \NN$ and 
\begin{equation*}
    \phi: \quad \RR \to \RR, \quad \phi(x) \defeq \frac{1}{1+e^{-x}}
\end{equation*}
be the sigmoid function. Then there exists a constant $c = c(n,k) > 0$ with the following property: If the numbers $\varepsilon \in (0,\frac{1}{2})$ and $m \in \NN$ are such that for every function $f \in C^k \left([-1,1]^n ; \RR\right)$ with $\left\Vert f \right\Vert_{C^k\left([-1,1]^n; \RR\right)} \leq 1$ there exist coefficients $\rho_1, ..., \rho_m \in \RR^n$, $\eta_1, ..., \eta_m \in \RR$ and $\sigma_1, ..., \sigma_m \in \RR$ satisfying
\begin{equation*}
    \left\Vert f(x) - \sum_{j=1}^m \sigma_j \cdot \phi \left( \rho_j^T x + \eta_j\right)\right\Vert_{L^\infty \left([-1,1]^n ; \RR\right)} \leq \varepsilon,
\end{equation*}
then necessarily
\begin{equation*}
    m \geq c \cdot \frac{\varepsilon^{-n/k}}{  \mathrm{ln}\left( 1/\varepsilon\right)}.
\end{equation*}
\end{theorem}

\begin{proof}
    We first pick a function $\psi \in C^\infty \left(\RR^n; \RR\right)$ with the property that $\psi(0) = 1$ and $\psi(x) = 0$ for every $x \in \RR^n$ with $\Vert x \Vert_2 > \frac{1}{4}$. We then choose
    \begin{equation*}
        c_1 = c_1(n,k) \defeq \left(\left\Vert \psi\right\Vert_{C^k\left([-1,1]^n;\RR\right)}\right)^{-1}.
    \end{equation*}
    Now, let $\varepsilon\in (0, \frac{1}{2})$ and $m \in \NN$ be arbitrary with the property stated in the formulation of the theorem. If $\varepsilon > \frac{c_1}{2}\cdot \frac{1}{6^k}$, then $m \geq c \cdot \frac{\varepsilon ^{-n/k}}{\ln (1 / \varepsilon)}$ trivially holds (as long as $c = c(n,k)> 0$ is sufficiently small). Hence, we can assume that $\varepsilon \leq \frac{c_1}{2} \cdot \frac{1}{6^k}$. Now, let $N$ be the smallest integer with $N \geq 2$, for which
    \begin{equation*}
        \frac{c_1}{2^{k+1}} \cdot N^{-k} \leq \varepsilon.
    \end{equation*}
    Note that this implies 
\begin{equation*}
N^k \geq \frac{c_1}{\varepsilon}\cdot \frac{1}{2^{k+1}} \geq \frac{c_1}{2^{k+1}} \cdot \frac{2}{c_1} \cdot 6^k = 3^k
\end{equation*}
 and hence $N \geq 3$, whence $N-1 \geq 2$. Therefore, by minimality of $N$, and since $\frac{N}{2} \leq N-1 $ because of $N \geq 2$, it follows that
    \begin{equation}
    \label{epsilonbound}
        \varepsilon < \frac{c_1}{2^{k+1}} \cdot (N-1)^{-k} \leq \frac{c_1}{2^{k+1}} 2^k \cdot N^{-k} = \frac{c_1}{2} \cdot N^{-k}.
    \end{equation}
    Now, for every $\alpha \in \{-N, ..., N\}^n$ pick $z_\alpha \in \{0,1\}$ arbitrary and let $y_\alpha \defeq z_\alpha c_1 N^{-k}$. Define the function
    \begin{equation*}
        f(x) \defeq \sum_{\alpha \in \{-N, ..., N\}^n} y_\alpha \cdot \psi \left( N x - \alpha\right), \quad x \in \RR^n.
    \end{equation*}
    Clearly, $f \in C^\infty (\RR^n; \RR)$. Furthermore, since the supports of the functions $\psi(\bullet - \alpha), \ \alpha \in \ZZ^n$ are pairwise disjoint, we see for any multi-index $\kk \in \NN_0^n$ with $\vert \kk \vert \leq k$ that
    \begin{align*}
        \left\Vert \partial^\kk f\right\Vert_{L^\infty \left([-1,1]^n; \RR\right)} &\leq N^{\vert \kk \vert} \cdot \underset{\alpha}{\max} \left\vert y_\alpha \right\vert \cdot \left\Vert \partial ^{\kk}\psi\right\Vert_{L^\infty \left([-1,1]^n; \RR\right)} \\
        &\leq N^{ k} \cdot \underset{\alpha}{\max} \left\vert y_\alpha \right\vert \cdot \left\Vert \psi\right\Vert_{C^k \left([-1,1]^n; \RR\right)} \leq 1,
    \end{align*}
    so we conclude that $\left\Vert f \right\Vert_{C^k \left([-1,1]^n; \RR\right)} \leq 1$. Additionally, for any fixed $\beta \in \{-N, ..., N\}^n$ we see 
    \begin{equation*}
        f\left(\frac{\beta}{N}\right) = y_\beta,
    \end{equation*}
    by choice of $\psi$. 
    See also \Cref{fig:VCDimensionFunction} for an illustration of the function $f$.

    By assumption, we can choose suitable coefficients $\rho_1, ..., \rho_m \in \RR^n$, $\eta_1, ..., \eta_m \in \RR$ and furthermore $\sigma_1, ..., \sigma_m \in \RR$ such that
    \begin{equation*}
        \left\Vert f - g \right\Vert_{L^\infty \left([-1,1]^n ; \RR\right)} \leq \varepsilon
    \end{equation*}
    for
    \begin{equation*}
   	g \defeq \sum_{j=1}^m \sigma_j \cdot \phi \left( \rho_j^T \cdot \bullet + \eta_j\right).
    \end{equation*}
Letting 
\begin{equation}  \label{gform}
\tilde{g} \defeq  g(\bullet / N) = \sum_{j=1}^m \sigma_j \cdot \phi \left( \frac{\rho_j^T}{N} \cdot\bullet + \eta_j\right),
\end{equation}
    we see for every $\alpha \in \{-N, ..., N\}^n$ that
    \begin{align*}
        \tilde{g}(\alpha) = g\left( \frac{\alpha}{N}\right) \begin{cases} \geq y_\alpha - \varepsilon = c_1N^{-k} - \varepsilon \overset{\eqref{epsilonbound}}{>} \left(c_1/2\right) N^{-k},& \text{if }z_\alpha = 1, \\ \leq y_\alpha + \varepsilon 
 \overset{\eqref{epsilonbound}}{<} \left ( c_1/2\right)N^{-k},& \text{if }z_\alpha = 0.\end{cases}
    \end{align*}
    Therefore, we get $\mathbbm{1}\left(\tilde{g} > (c_1/2)N^{-k}\right)\left(\alpha\right) = z_\alpha$ for any $\alpha \in \{-N, ..., N\}^n$. Since the choice of $z_\alpha$ has been arbitrary, it follows that the set
    \begin{equation*}
        H \defeq \left\{ \fres{\mathbbm{1}\left(\tilde{g} > (c_1/2)N^{-k}\right)}{\{-N,...,N\}^n} : \ \tilde{g} \text{ of form } \eqref{gform} \right\}
    \end{equation*}
    shatters the whole set $\{-N, ..., N\}^n$. Therefore, we conclude that
    \begin{equation*}
        \text{VC}(H) \geq (2N+1)^n \geq N^n.
    \end{equation*}
    On the other hand,  \cite[Theorem 8.11]{anthony_neural_1999} shows that
    \begin{equation*}
        \text{VC}(H) \leq 2m(n+2)\log_2 (60n\cdot N) \leq c_3 \cdot m \cdot \ln(N)
    \end{equation*}
    with a suitably chosen constant $c_3 = c_3(n)$. Here we used that $N \geq 3$ so that $\ln(N) \geq \ln(3) > 0$. Combining those two inequalities yields 
    \begin{equation*}
        m \geq \frac{N^n}{c_3 \cdot \ln(N)}.
    \end{equation*}
    Using that $N \geq c_4 \cdot \varepsilon^{-1/k}$ with $c_4 \defeq c_4(n,k) = \left(\frac{c_1}{2^{k+1}}\right)^{1/k}$ and $N \leq c_5 \cdot \varepsilon^{-1/k}$ with the definition $c_5 \defeq c_5(n,k) = \left(\frac{c_1}{2}\right)^{1/k}$, we see that
    \begin{equation*}
        m \geq \frac{c_4^n \cdot \varepsilon^{-n/k}}{c_3 \cdot \ln \left(c_5 \cdot \varepsilon^{-1/k}\right)} \geq c_6 \cdot \frac{\varepsilon^{-n/k}}{ \ln \left(1/\varepsilon\right)}
    \end{equation*}
    with $c_6=c_6(n,k)>0$ chosen appropriately.
\end{proof}

\begin{figure}[t]
\centering
\includegraphics[trim=1cm 2cm 0 6cm, width=0.95\textwidth, clip]{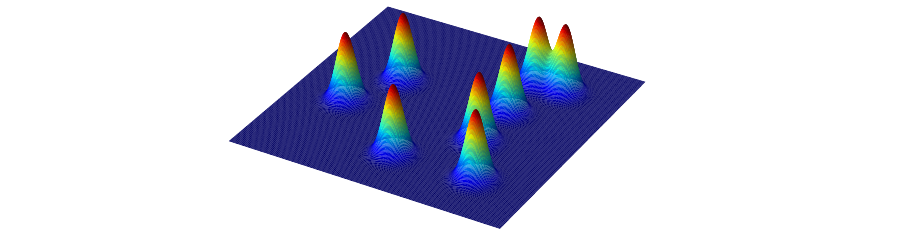}
\caption{Illustration of the function $f$ considered in the proof of \Cref{sigmoidoptimality}.}
\label{fig:VCDimensionFunction}
\end{figure}

As a corollary, we get a similar result for complex-valued neural networks.
\begin{corollary}
\label{sigmoidcomplex}
    Let $n,k \in \NN$ and 
\begin{equation*}
    \phi: \quad \CC \to \CC, \quad \phi(z) \defeq \frac{1}{1+e^{-\RE(z)}}.
\end{equation*}
Then there exists a constant $c = c(n,k) > 0$ with the following property: If $\varepsilon \in (0, \frac{1}{2})$ and $m \in \NN$ are such that for every function $f \in C^k \left(\Omega_n ; \CC\right)$ with $\left\Vert f \right\Vert_{C^k\left(\Omega_n; \CC\right)} \leq 1$ there exist coefficients $\rho_1, ..., \rho_m \in \CC^n$, $\eta_1, ..., \eta_m \in \CC$ and $\sigma_1, ..., \sigma_m \in \CC$ satisfying
\begin{equation*}
    \left\Vert f (z)- \sum_{j=1}^m \sigma_j \cdot \phi \left( \rho_j^T z + \eta_j\right)\right\Vert_{L^\infty \left(\Omega_n ; \CC\right)} \leq \varepsilon,
\end{equation*}
then necessarily 
\begin{equation*}
    m \geq c \cdot \frac{\varepsilon^{-2n/k}}{  \mathrm{ln}\left( 1/\varepsilon\right)}.
\end{equation*}
\end{corollary}
\begin{proof}
    We choose the constant $c=c(2n,k)$ according to the previous \Cref{sigmoidoptimality} and let $\varphi_n$ as in \eqref{isomorphism_intro}. Then, let $\varepsilon \in (0, \frac{1}{2})$ and $m \in \NN$ with the properties assumed in the statement of the corollary. If we then take an arbitrary function $f \in C^k \left([-1,1]^{2n}; \RR\right)$ with $\left\Vert f \right\Vert_{C^k \left([-1,1]^{2n}; \RR\right)} \leq 1$, we deduce the existence of $\rho_1,..., \rho_m \in \CC^n$, $\eta_1, ..., \eta_m \in \CC$ and $\sigma_1, ..., \sigma_m \in \CC$, such that
    \begin{align*}
        &\left\Vert f(x)  - \RE \left( \sum_{j=1}^m \sigma_j \cdot \phi \left( \rho_j^T \cdot \varphi_n(x) + \eta_j\right)\right)  \right\Vert_{L^\infty \left([-1,1]^{2n}; \RR\right)} \\
        \leq & \left\Vert (f \circ \varphi_n^{-1})(z) - \sum_{j=1}^m \sigma_j \cdot \phi \left( \rho_j^T z + \eta_j\right)\right\Vert_{L^\infty \left(\Omega_n ; \CC\right)} \leq \varepsilon.
    \end{align*}
    In the next step, we show that 
    \begin{equation*}
        \RR^{2n} \ni x \mapsto \RE \left( \sum_{j=1}^m \sigma_j \cdot \phi \left( \rho_j^T \cdot \varphi_n(x) + \eta_j\right)\right)  
    \end{equation*}
    is a real-valued shallow neural network with $m$ neurons in the hidden layer and the real sigmoid function as activation function. Then the claim follows using \Cref{sigmoidoptimality}. 

    For every $j \in \{1,...,m\}$ we pick a matrix $\tilde{\rho_j} \in \RR^{2n \times 2}$ with the property that one has
    \begin{equation*}
        \tilde{\rho_j} ^T \cdot \varphi_n^{-1}(z) = \varphi_1^{-1} \left(\rho_j^T \cdot z\right) 
    \end{equation*}
    for every $z \in \CC^n$. This is possible, since this is equivalent to 
\begin{equation*}
\tilde{\rho_j}^T v = \varphi_1^{-1} (\rho_j^T \varphi_n(v))
\end{equation*}
for all $v \in \RR^{2n}$, where the right-hand side is an $\RR$-linear map $\RR^{2n} \to \RR^2$. Denoting the first column of $\tilde{\rho_j}$ by $\hat{\rho_j}$, we get
    \begin{equation*}
        \hat{\rho_j} ^T \cdot \varphi_n^{-1}(z) = \RE \left(\rho_j^T \cdot z\right) \quad \text{for all } z \in\CC^n.
    \end{equation*}
    Writing $\gamma$ for the classical real-valued sigmoid function (i.e. $\gamma(x) = \frac{1}{1 + e^{-x}}$), we deduce for arbitrary $x \in \RR^{2n}$ that
    \begin{align*}
        \RE \left( \sum_{j=1}^m \sigma_j \cdot \phi \left( \rho_j^T \cdot \varphi_n(x) + \eta_j\right)\right) &= \RE \left( \sum_{j=1}^m \sigma_j \cdot \gamma\left( \RE \left( \rho_j^T \cdot \varphi_n(x) + \eta_j\right)\right)\right)  \\
        &= \RE \left( \sum_{j=1}^m \sigma_j \cdot \gamma\left(  \hat{\rho_j}^T x + \RE\left(\eta_j\right)\right)\right)  \\
        &= \sum_{j=1}^m \RE\left(\sigma_j\right) \cdot \gamma\left(  \hat{\rho_j}^T x + \RE\left(\eta_j\right)\right),
    \end{align*}
    where in the last step we used that $\gamma$ is real-valued. As noted above, this completes the proof.
\end{proof}

Now, we can finally prove \Cref{main_5}.

\begin{proof}[Proof of \Cref{main_5}]
Let $\alpha = \frac{2n}{k}$ and choose $c_2 = c_2(\alpha) = c_2(n,k) > 0$ such that the inequality $\ln(x) \leq c_2 \cdot x^{\alpha /2}$ holds for all $x \geq 1$. Furthermore, let $c_1 = c_1(n,k) > 0$ as in \Cref{sigmoidcomplex}. By choosing $c = c(n,k) > 0$ sufficiently small , we can ensure that $\eps_m \defeq c \cdot (m \cdot \ln(m))^{-k/(2n)}$ satisfies 
\begin{equation*}
\ln \left(\frac{c_1}{c_2}\right) + \frac{\alpha}{2} \ln (1/\eps_m) \geq \frac{\alpha}{4} \cdot \ln(1/\eps_m) \quad \text{for all } m \in \NN_{\geq 2}.
\end{equation*}
    By further shrinking $c = c(n,k)>0$ if necessary, we may assume 
\begin{equation*}
c \cdot (2 \cdot \ln (2))^{-k/(2n)} < \frac{1}{2}
\end{equation*}
 and hence $c \cdot (m \cdot \ln (m))^{-k/(2n)} < \frac{1}{2}$ for all $m \in \NN_{\geq 2}$. Finally, setting $c_3 \defeq \frac{\alpha}{4}$ and shrinking $c$ even further, we can arrange that $c^\alpha < c_1 \cdot c_3$. Now, assume towards a contradiction that for every $f \in C^k \left(\Omega_n ; \CC\right)$ with $\Vert f \Vert_{C^k \left(\Omega_n; \CC\right)} \leq 1$ there are coefficients $\rho_1, ..., \rho_m \in \CC^n, \sigma_1, ..., \sigma_m \in \CC$ and $\eta_1, ..., \eta_m \in \CC$ such that
    \begin{equation*}
        \left\Vert f(z) - \sum_{j=1}^m \sigma_j \cdot \phi \left( \rho_j^T z + \eta_j\right)\right\Vert_{L^\infty \left(\Omega_n ; \CC\right)} < c \cdot \left(m \cdot \ln (m)\right)^{-k/(2n)}.
    \end{equation*}
    Applying \Cref{sigmoidcomplex}, we then get
    \begin{equation*}
        m \geq c_1 \cdot \frac{\varepsilon^{-2n/k}}{  \mathrm{ln}\left( 1/\varepsilon\right)}
    \end{equation*}
    with $\varepsilon = c \cdot \left(m \cdot \ln{m}\right)^{-k/(2n)} \in (0, \frac{1}{2})$ and $c_1 = c_1(n,k) > 0$. Recall from the beginning of the proof that $\alpha := 2n/k$ and that $\ln(x) \leq c_2 x^{\alpha / 2}$ for every $x \geq 1$. We observe
    \begin{equation*}
        m \geq c_1 \cdot \frac{\varepsilon^{-\alpha}}{  \mathrm{ln}\left( 1/\varepsilon\right)} \geq \frac{c_1}{c_2} \varepsilon^{- \alpha /2},
    \end{equation*}
    which implies
    \begin{equation*}
        \ln(m) \geq \ln \left(\frac{c_1}{c_2}\right) + \frac{\alpha}{2} \cdot \ln(1/\varepsilon) \geq \frac{\alpha}{4} \cdot \ln(1/\varepsilon) = c_3 \cdot \ln(1/\varepsilon) .
    \end{equation*}
    Overall we then get
    \begin{equation*}
        m \geq c_1 \cdot \frac{\varepsilon^{-\alpha}}{\ln(1/\varepsilon)} = c_1 \cdot c^{-\alpha} \cdot \frac{m \cdot \ln(m)}{\ln(1/\varepsilon)} \geq c_1 \cdot c_3 \cdot c^{-\alpha} \cdot \frac{m \cdot \ln(m)}{\ln(m)} = c_1 \cdot c_3 \cdot c^{-\alpha} \cdot m.
    \end{equation*}
    Since we chose $c$ such that $c^\alpha < c_1 \cdot c_3$ we get the desired contradiction.
\end{proof}

\end{document}